\theoremstyle{plain}
\newtheorem{theorem}{Theorem}[section]
\newtheorem*{theorem*}{Theorem}
\newtheorem*{mtheorem*}{Main Theorem}
\newtheorem{lemma}[theorem]{Lemma}
\newtheorem{corollary}[theorem]{Corollary}
\newtheorem{proposition}[theorem]{Proposition}
\newtheorem{observation}[theorem]{Observation}
\theoremstyle{definition}
\newtheorem{remark}[theorem]{Remark}
\newtheorem*{notation*}{Notation}
\newtheorem{example}[theorem]{Example}
\newtheorem*{example*}{Example}
\newtheorem{definition}[theorem]{Definition}
\newcommand{\Ext}{\mathrm{Ext}}
\newcommand{\Hom}{\mathrm{Hom}}
\newcommand{\End}{\mathrm{End}}
\newcommand{\Prod}[1]{\mathrm{Prod}(#1)}
\newcommand{\id}[2]{\mathrm{id}_{#1}(#2)}
\newcommand{\pd}[2]{\mathrm{pd}_{#1}(#2)}
\newcommand{\Mod}[1]{\mathrm{Mod}(#1)}
\renewcommand{\mod}[1]{\mathrm{mod}(#1)}
\newcommand{\PI}{\mathrm{PE}}
\newcommand{\Ab}{\mathrm{Ab}}
\newcommand{\FunR}{\mathcal{F}(R)}
\newcommand{\Pinj}[1]{\mathrm{Pinj}(#1)}
\newcommand{\Inj}[1]{\mathrm{Inj}(#1)}
\newcommand{\Cogen}[1]{\mathrm{Cogen}(#1)}
\renewcommand{\k}{\mathrm{K}}
\DeclareMathAlphabet{\mathpzc}{OT1}{pzc}{m}{it}
\newcommand{\ann}[1]{\mathrm{Ann}(#1)}
\renewcommand{\dim}[1]{\mathrm{dim}_k(#1)}
\newcommand{\Tor}{\mathrm{Tor}}
\begin{document} 

\title{Classification of cosilting modules in type $\tilde{A}$}
\author{Karin Baur and Rosanna Laking}
\address{Karin Baur, School of Mathematics, University of Leeds, Leeds, LS2 9JT, UK}
\email{pmtkb@leeds.ac.uk}
\urladdr{http://www1.maths.leeds.ac.uk/~pmtkb/}
\address{Rosanna Laking,  Dipartimento di Informatica - Settore di Matematica, Universit\`a degli Studi di Verona, Strada le Grazie 15 - Ca' Vignal, I-37134 Verona, Italy} 
\email{rosanna.laking@univr.it}
\urladdr{http://profs.scienze.univr.it/laking/}

\begin{abstract}
Torsion pairs in the category of finitely presented modules over a noetherian ring can be parametrised by the class of cosilting modules.  In this paper, we characterise such modules in terms of their indecomposable summands, providing a new approach to 
the classification of torsion pairs. In particular, we 
classify cosilting modules over cluster-tilted algebras of type $\tilde{A}$. We do this by 
using a geometric model for finite- and infinite-dimensional modules over such algebras.
\end{abstract}
\subjclass[2010]{16G10, 16G20, 16S90, 16E05}
\keywords{cosilting, pure-injective, cluster-tilted, torsion pair, geometric model, gentle algebra}
\thanks{
The first author was supported by FWF grants P30549-N26 and DK1230. She is supported by a Royal Society 
Wolfson Research Merit Award. Currently, she is on leave from the University of Graz. 
The second author was supported by the Max Planck Institute for Mathematics and also by the 
European Union's Horizon 2020 research and innovation programme under the 
Marie Sk{\l}odowska-Curie Grant Agreement No.~797281.
}

\date{}

\maketitle

Cosilting modules were first introduced by Breaz and Pop (\cite{BreazPop}) as a dual notion to silting modules (\cite{AMV}).  These modules are a common generalisation of (large) cotilting modules and support $\tau^{-1}$-tilting modules over finite-dimensional algebras.  The next theorem establishes a close relationship between cosilting modules in the module category $\Mod{R}$ and the torsion pairs in the subcategory category $\mod{R}$ of noetherian modules when $R$ is left noetherian.  It follows immediately from the combination of two known results in the literature.  The notation used in the bijection is defined in the notation section at the end of the introduction.

\begin{theorem*}[{\cite[Cor.~3.9]{abundance}, \cite[Lem.~4.4]{CBLFP}}]
For a left noetherian ring $R$, the assignment sending a cosilting module $C$ to the torsion pair $({}^{\perp_0}C\cap \mod{R}, \Cogen{C}\cap\mod{R})$ induces a bijection between the following sets.\begin{enumerate}
\item The set of equivalence classes of cosilting modules $C$ in $\Mod{R}$.
\item The set of torsion pairs $(\mathcal{X}, \mathcal{Y})$ in $\mod{R}$.
\end{enumerate} 
\end{theorem*}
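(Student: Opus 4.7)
The plan is simply to obtain the stated bijection as the composition of the two bijections cited in the statement; there is essentially nothing to prove beyond aligning conventions.

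First I would invoke \cite[Cor.~3.9]{abundance}, which, for an arbitrary ring $R$, establishes a bijection between equivalence classes of cosilting modules $C$ in $\Mod{R}$ and the torsion pairs $(\mathcal{X}, \mathcal{Y})$ in $\Mod{R}$ whose torsion-free class $\mathcal{Y}$ is definable (equivalently, closed under direct limits, direct products, and pure submodules). Under this bijection, $C$ corresponds to the torsion pair $({}^{\perp_0}C, \Cogen{C})$.

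Next I would invoke \cite[Lem.~4.4]{CBLFP}, which exploits the left noetherian hypothesis on $R$: torsion pairs $(\mathcal{X}, \mathcal{Y})$ in $\Mod{R}$ whose torsion-free class is definable are in bijection with torsion pairs in $\mod{R}$ via the restriction map $(\mathcal{X}, \mathcal{Y}) \mapsto (\mathcal{X} \cap \mod{R}, \mathcal{Y} \cap \mod{R})$, the inverse being the passage to the torsion pair generated in $\Mod{R}$ by a given torsion pair in $\mod{R}$. Composing these two bijections yields exactly the assignment $C \mapsto ({}^{\perp_0}C \cap \mod{R}, \Cogen{C} \cap \mod{R})$ of the statement.

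The only step that requires any attention is the bookkeeping needed to verify that the two sources use compatible conventions: in particular, that the torsion pairs arising from cosilting modules in \cite{abundance} coincide with the torsion pairs with definable torsion-free class considered in \cite{CBLFP}, and that the equivalence relation on cosilting modules is the one used in \cite[Cor.~3.9]{abundance}. Both points are recorded in \cite{abundance}, so I do not anticipate any conceptual obstacle; the argument is a purely formal composition of two previously established bijections.
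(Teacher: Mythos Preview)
Your proposal is correct and matches the paper's own treatment exactly: the theorem is stated without proof, the authors noting only that it ``follows immediately from the combination of two known results in the literature,'' namely \cite[Cor.~3.9]{abundance} and \cite[Lem.~4.4]{CBLFP}. Your description of how the two cited bijections compose is precisely the intended argument.
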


Torsion pairs play a fundamental role in localisation theory (see, for example, \cite{BelRei}) and also provide a direct connection between the module category and t-structures in the derived category (\cite{HRS}).  Tilting theory has played a major role in the study of torsion pairs in module categories, but this often only yields a restricted class of torsion pairs.  For example, under the above bijection, the cotilting modules correspond to the torsion pairs $(\mathcal{X}, \mathcal{Y})$ such that $\mathcal{Y}$ generates $\mod{R}$ (\cite[Thm.~A]{BuanKrause}) and, if $R$ is a finite-dimensional algebra, then the support $\tau^{-1}$-tilting modules correspond to the torsion pairs $(\mathcal{X}, \mathcal{Y})$ such that $\mathcal{Y}$ is functorially finite (\cite[Thm.~2.15]{AIR}).  More recently the lattice of all torsion pairs in $\mod{R}$ has been investigated in the case where $R$ is a finite-dimensional algebra via the study of bricks in $\mod{R}$ (\cite{DIRRT}). 

The above theorem suggests a different approach to the classification of torsion pairs in $\mod{R}$: classify equivalence classes of cosilting modules in $\Mod{R}$. With a view to following this approach, we show in Theorem \ref{Thm: cotilt vs rigid} that equivalence classes of cotilting modules over left artinian rings are parametrised by maximal sets of pairwise $\Ext$-orthogonal indecomposable pure-injective modules of injective dimension less than or equal to $1$.  This yields Corollary \ref{cor: max rig cosilt}, which is a new characterisation of cosilting modules in the same setting.

From this starting point we consider the case of cosilting modules over cluster-tilted algebras of type $\tilde{A}$.  This family of algebras was introduced by Buan, Marsh and Reiten (\cite{BMR}) in the context of cluster theory and has been extensively studied since.  Such algebras can be characterised as surface algebras over the annulus (\cite{ABCJP}) and, in particular, all indecomposable finite-dimensional modules are known to be string or band modules.  Moreover, it is possible to express all finite-dimensional string modules as arcs between marked points on the surface.  

In the same way, we express the infinite-dimensional (indecomposable pure-injective) string modules as \emph{asymptotic arcs} in the surface, starting at a marked point and converging towards the unique non-contractible closed curve (see \cite{BBM, BD}).  The remaining infinite-dimensional indecomposable pure-injective modules, which were classified in \cite{PP}, consist of band modules parametrised by $\k^* \times \{\infty, -\infty\}$, as well as the unique generic module.  We represent this family of band modules, together with the finite-dimensional band modules, by the non-contractible closed curve in the surface.   Using this geometric model of the pure-injective modules, we establish the following classification of the cosilting modules up to equivalence.  We use the term \emph{arc} to refer to either a finite or asymptotic arc up to homotopy.

\begin{mtheorem*}[Theorem \ref{Thm: main theorem} and Remark \ref{Rem: parametrising sets}]
Let $A(\Gamma)$ be a cluster-tilted $\k$-algebra of type $\tilde{A}$ where $\Gamma$ is a triangulation of the annulus $\mathrm{S}$ with marked points $\mathrm{M}$.  There are the following bijective correspondences: \[ \mathcal{A}(\mathrm{S}, \mathrm{M}) \times \mathcal{P}(\k^*) \: \: \overset{1\text{-}1}{\longleftrightarrow} \: \: \mathrm{Cosilt}\text{-}A(\Gamma)\]
 \[\mathcal{T}(\mathrm{S}, \mathrm{M}) \: \: \overset{1\text{-}1}{\longleftrightarrow} \: \: \mathrm{cosilt}\text{-}A(\Gamma)\]
 \end{mtheorem*}

\noindent Where $\mathcal{A}(\mathrm{S}, \mathrm{M})$ is the set of maximal collections of non-crossing 
arcs in $(\mathrm{S}, \mathrm{M})$ that contain at least one asymptotic arc; $\mathcal{P}(\k^*)$ is the powerset of $\k^*$;  
$\mathcal{T}(\mathrm{S}, \mathrm{M})$ is the set of maximal collections of non-crossing arcs in 
$(\mathrm{S}, \mathrm{M})$ that contain no asymptotic arcs; $\mathrm{Cosilt}\text{-}A(\Gamma)$ is the set of infinite-dimensional cosilting $A(\Gamma)$-modules up to equivalence; and $\mathrm{cosilt}\text{-}A(\Gamma)$ is the set of finite-dimensional cosilting $A(\Gamma)$-modules up to equivalence.

The diagram below depicts an element of $\mathcal{A}(\mathrm{S}, \mathrm{M})$ where $(\mathrm{S}, \mathrm{M})$ is as in Figure~\ref{fig:triangulation}.  By the Main Theorem, each subset of $\k^*$ determines an infinite-dimensional cosilting module with the indecomposable pure-injective modules corresponding to the arcs $\alpha_1, \alpha_2, \alpha_3, \alpha_4, \alpha_5$ occurring as direct summands.
\[
\includegraphics[width=4cm]{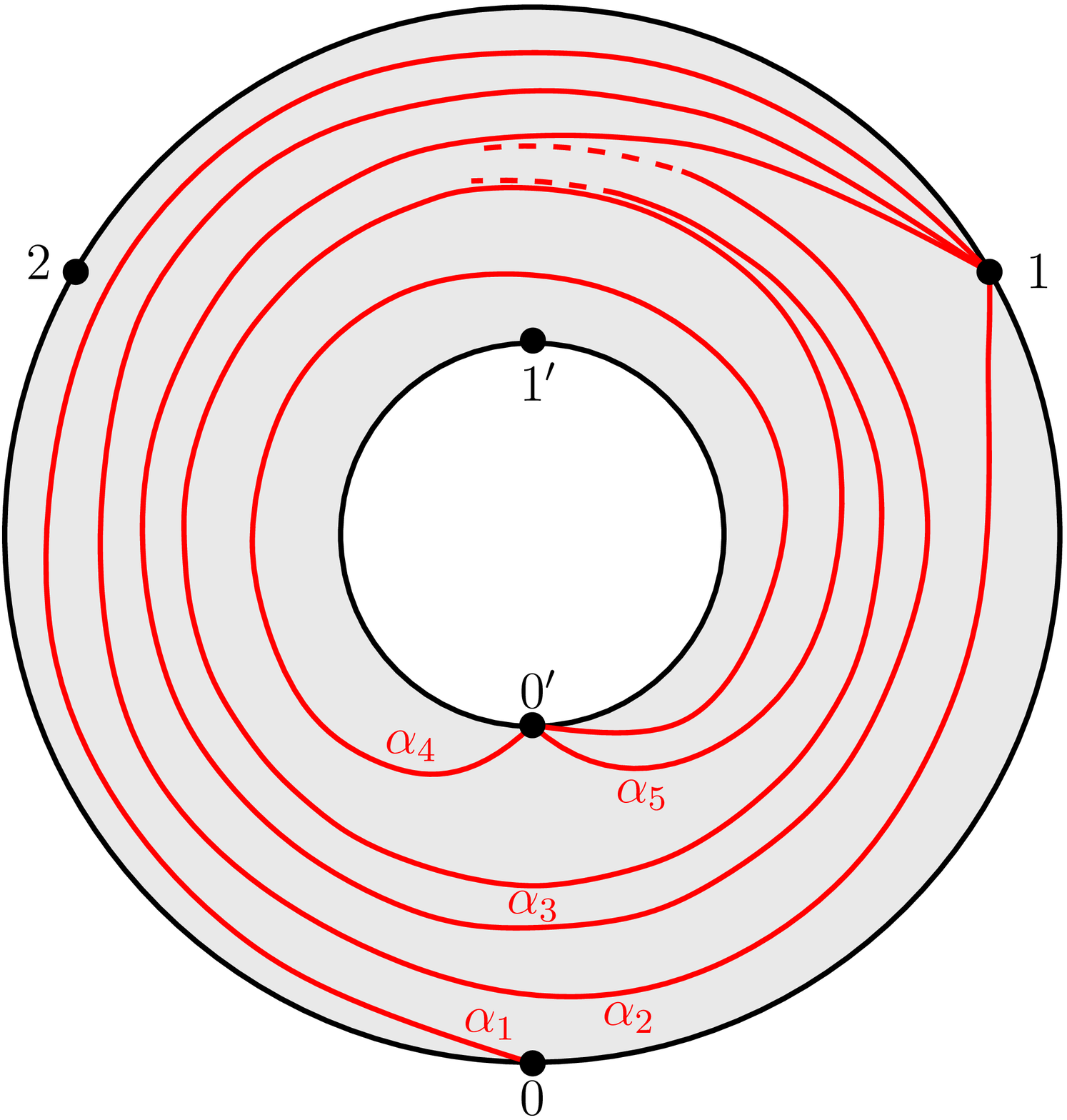}
\]

This classification encompasses some known results, including the classification of cotilting modules over the path algebra of a quiver of type $\tilde{A}$ (\cite{BuanKrause}) and the classification of torsion pairs in a tube category in terms of asymptotic arcs (\cite{BBM}); see Remark \ref{Rem: recovery}.   We also note that there is a significant body of existing work on the classification of torsion pairs in cluster categories via geometric models \cite{GHJ, Ng, HJR1, HJR2, HJR3, ZZZ}.

We end this introduction with a summary of the contents of the article.  The first section is dedicated to giving a characterisation of cosilting modules over a left artinian ring $R$ in terms of certain classes of indecomposable pure-injective modules (Corollary \ref{cor: max rig cosilt}).  To do this, we first introduce the fundamental notions from the theory of purity (Section \ref{Sec: purity}) and then provide a characterisation of cotilting $R$-modules (Section \ref{Sec: art cotilt}).  Section \ref{Sec: cluster-tilted} of the paper focuses on cluster-tilted algebras of type $\tilde{A}$.  In Section \ref{Sec: type A}, we introduce the algebras and the geometric model.  In Section \ref{sec: max rig} we provide several preliminary results that will lead us to the classification.  In particular, we describe the indecomposable pure-injective modules (Section \ref{sec: ind pinj}); determine their injective dimensions (Section \ref{Sec: inj dim}); and describe extensions between them (Section \ref{Sec: Extensions in A}).  The main result of Section \ref{Sec: Extensions in A} is Theorem \ref{Thm: extensions}, which establishes a correspondence between certain crossings of arcs in the surface and extensions between the corresponding modules.  A large portion of the proof of the theorem makes use of the combinatorial description of the derived category of $A(\Gamma)$; we postpone these arguments until Appendix \ref{App: Ext}.  The final section (Section \ref{Sec: Asymp_tri}) consists of the proof of Theorem \ref{Thm: main theorem}.

\begin{notation*}\label{Not: not}
Throughout $R$ denotes an associative unital ring.  The category of left $R$-modules is denoted by $\Mod{R}$ and the full subcategory of finitely presented left $R$-modules is denoted by $\mod{R}$.  

Let $N$ be a left $R$-module and $I$ a set.  We use the notation $N^I$ to denote the direct product of copies of $N$ indexed by $I$.  We use the notation $N^{(I)}$ to denote the direct sum of copies of $N$ indexed by $I$.

For a set $\mathcal{N}$ of modules we use the notation $\Prod{\mathcal{N}}$ to denote the set of direct summands of direct products of copies of objects in $\mathcal{N}$.  If $\mathcal{N}=\{N\}$ then we will write $\Prod{N}$ for $\Prod{\mathcal{N}}$.

We denote the set of submodules of modules in $\Prod{\mathcal{N}}$ (respectively in $\Prod{N}$) by $\Cogen{\mathcal{N}}$ (respectively $\Cogen{N}$) for a set $\mathcal{N}$ of modules or a module $N$.  For a module $N$, define the sets ${}^{\perp_1} N := \{ M \in \Mod{R} \mid \Ext_R^1(M,N) = 0 \}$ and $ {}^{\perp_0}N := \{ M \in \Mod{R} \mid \Hom_R(M,N) = 0 \}$.
\end{notation*}


\section{Cosilting modules over artinian rings.}\label{Sec: art cotilt}

\subsection{Background on purity in module categories.}\label{Sec: purity}

In this section we briefly define some key concepts from the theory of purity in module categories.  Consider the category $\FunR$ of additive functors from the category $\mod{R^{\mathrm{op}}}$ of finitely presented right $R$-modules to the category $\Ab$ of abelian groups.  There exists a fully faithful functor $t \colon \Mod{R} \rightarrow \FunR$ where, for each left $R$-modules $M$, we define the functor $t(M) \colon \mod{R^{\mathrm{op}}} \to \Ab$ which takes $N \mapsto N \otimes_R M$ and $f \mapsto f\otimes_R 1_M$

\begin{definition}
An exact sequence $0 \rightarrow X \overset{f}{\rightarrow} Y \overset{g}{\rightarrow} Z \rightarrow 0$ in $\Mod{R}$ is called \textbf{pure-exact} if its image $0 \rightarrow t(X) \rightarrow t(Y) \rightarrow t(Z) \rightarrow 0$ is an exact sequence in $\FunR$.  In this case, the morphism $f$ is called a \textbf{pure monomorphism} and $g$ is called a \textbf{pure epimorphism}.  A module $N$ is called \textbf{pure-injective} if every pure monomorphism $N \rightarrow Y$ splits.  Let $\Pinj{R}$ denote the full subcategory of pure-injective modules.
\end{definition}

It turns out that the functor $t \colon \Mod{R} \rightarrow \FunR$ restricts to an equivalence of categories $t \colon \Pinj{R} \overset{\sim}{\rightarrow} \Inj{\FunR}$ where $\Inj{\FunR}$ is the full subcategory of injective objects in $\FunR$ (see, for example, \cite[Thm.~B.16]{JL}).

\begin{definition}
We say that a module $N$ in $\Pinj{R}$ has \textbf{no superdecomposable part} if every nonzero direct summand of $N$ has a nonzero indecomposable direct summand.
\end{definition}

If $N$ has no superdecomposable part, then there exists a set $\mathcal{N}$ of indecomposable pure-injective modules such that $N \cong \PI\left(\bigoplus_{M \in \mathcal{N}} M\right)$ where $\PI(-)$ denotes the \textbf{pure-injective envelope} of $\bigoplus_{M \in \mathcal{N}} M$ in $\Mod{R}$ i.e.~there exists a pure-monomorphism $\bigoplus_{M \in \mathcal{N}} M \to N$ such that $t\left(\bigoplus_{M \in \mathcal{N}} M\right) \to t(N)$ is an injective envelope of $t\left(\bigoplus_{M \in \mathcal{N}} M\right)$ in $\FunR$ (see \cite{Fisher}, cf.~\cite[Thm.~4.A14]{PrestBlue} for more details).

\begin{remark}\label{Rem: ess unique}
If \[\PI\left(\bigoplus_{M \in \mathcal{N}} M\right) \cong \PI\left(\bigoplus_{L \in \mathcal{L}} L\right)\] for sets $\mathcal{L}$ and $\mathcal{N}$ of indecomposable pure-injective modules, then there is a bijection $\sigma \colon \mathcal{N} \rightarrow \mathcal{L}$ such that $M \cong \sigma(M)$ for every $M$ in $\mathcal{N}$ (again, see \cite[Thm.~4.A14]{PrestBlue}).  It follows that (up to reordering and isomorphism) $\mathcal{N}$ is the set of indecomposable direct summands of $N$. For a pure-injective module $N$ with no superdecomposable part, we will use the notation $\mathcal{N}_N$ to denote this set.  Note that the elements of $\mathcal{N}_N$ are necessarily pure-injective.
\end{remark}

\begin{definition} An additive functor from $\Mod{R}$ to $\Ab$ is called a \textbf{coherent functor} if there exists a morphism $f \colon M \to N$ in $\mod{R}$ such that \[ \Hom_R(N,L) \overset{\Hom_R(f,L)}{\longrightarrow} \Hom_R(M,L) \to F(L) \to 0\] is an exact sequence of abelian groups for every $L \in \Mod{R}$. \end{definition}

\begin{definition}
A full subcategory $\mathcal{D}$ of $\Mod{R}$ is \textbf{definable} if there exists a set $\{ F_i \}_{i\in I}$ of coherent functors such that \[\mathcal{D} = \{ M \in \Mod{R} \mid F_i(M) =0 \text{ for all } i \in I\}.\]
\end{definition}

In \cite{CB}, Crawley-Boevey characterises definable subcategories of $\Mod{R}$ in terms of closure conditions.  He shows that a full subcategory $\mathcal{D}$ of $\Mod{R}$ is definable if and only if $\mathcal{D}$ is closed under direct limits, direct products and pure submodules i.e. if $X \rightarrow Y$ is a pure monomorphism with $Y\in\mathcal{D}$, then $X\in\mathcal{D}$.

\subsection{Cotilting modules over artinian rings.}\label{Sec: art cotilt}
In this section we characterise cotilting modules over artinian rings in terms of their indecomposable direct summands.  First we recall the definition of a cotilting module in $\Mod{R}$.  Note that we are only considering considering cotilting modules of injective dimension $\leq 1$ (so-called $1$-cotilting modules).

\begin{definition}\label{Def: cotilting}
A module $C$ in $\Mod{R}$ is called a \textbf{cotilting module} if $ \Cogen{C} = {}^{\perp_1} C$.  If $C_1$ and $C_2$ are cotilting modules, then we say that $C_1$ is \textbf{equivalent} to $C_2$ if $\Prod{C_1} = \Prod{C_2}$.  
\end{definition}

\noindent We will make use of the following properties of cotilting modules.

\begin{theorem}[{\cite[Thm.~2.8, Prop.~3.2]{Bazz}}]\label{Thm: cotilting are pure-inj}
Every cotilting module over $R$ is a pure-injective module and the full subcategory $\Cogen{C} = {}^{\perp_1} C$ is definable.
\end{theorem}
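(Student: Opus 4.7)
The plan is to establish definability of the class $\mathcal{C}:=\Cogen{C}={}^{\perp_1}C$ via Crawley--Boevey's criterion recalled in Section~\ref{Sec: purity}, and then to deduce the pure-injectivity of $C$ as a consequence.

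First I would verify the straightforward closure properties of $\mathcal{C}$. Closure under submodules is immediate from the $\Cogen{C}$ description. Closure under direct products and under extensions follows from the ${}^{\perp_1}C$ description, using that $\Ext^1_R(-,C)$ sends direct sums to products in the first argument and turns a short exact sequence into a six-term exact sequence.

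Closure under direct limits is the main obstacle and is the heart of Bazzoni's theorem. Given a direct system $(M_i)_{i\in I}$ in $\mathcal{C}$ with colimit $M$, one must show $\Ext^1_R(M,C)=0$. The standard argument uses the hypothesis $\id{R}{C}\leq 1$ built into the $1$-cotilting definition, together with a transfinite filtration of $M$ by submodules of bounded cardinality; an Eklof-style vanishing lemma along the filtration then yields $\Ext^1_R(M,C)=0$. Crawley--Boevey's criterion now implies that $\mathcal{C}$ is definable.

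Granting definability, the pure-injectivity of $C$ follows as in Bazzoni's Proposition~3.2. The canonical pure embedding $C\to D^2(C)$ into the double character dual has pure-injective codomain, and $D^2(C)$ lies in the smallest definable subcategory containing $C$, hence in $\mathcal{C}$. By pure-essentiality the pure-injective envelope $\iota\colon C\to\PI(C)$ fits into a pure monomorphism $\PI(C)\to D^2(C)$ that splits, exhibiting $\PI(C)$ as a direct summand of $D^2(C)$; therefore $\PI(C)\in\mathcal{C}$ and in particular $\Ext^1_R(\PI(C),C)=0$. Combined with the torsion-pair structure $({}^{\perp_0}C,\Cogen{C})$ on $\Mod{R}$ and the hypothesis $\id{R}{C}\leq 1$, one then shows that $\Ext^1_R(\PI(C)/C,C)=0$, so $\iota$ splits; by the pure-essentiality of $\iota$ this forces $\iota$ to be an isomorphism, whence $C=\PI(C)$ is pure-injective.

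The principal obstacle is the proof of closure under direct limits (Bazzoni's theorem), a non-trivial transfinite induction that exploits the $1$-cotilting hypothesis in an essential way. The remaining steps, once definability is secured, form a relatively clean interplay between the pure-injective envelope, the torsion-pair structure of $\mathcal{C}$, and the Ext-orthogonality $\Cogen{C}={}^{\perp_1}C$.
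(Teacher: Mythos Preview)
The paper does not give a proof of this statement; it simply records it as a citation to Bazzoni. So there is no in-paper argument to match. What one can compare against is Bazzoni's own logical order: Theorem~2.8 in \cite{Bazz} establishes pure-injectivity \emph{first}, by a direct cardinality/splitter argument, and Proposition~3.2 then deduces definability of $\Cogen{C}={}^{\perp_1}C$ \emph{from} pure-injectivity (once $C$ is pure-injective, a direct limit $M=\varinjlim M_i$ sits in a pure-exact sequence $0\to K\to\bigoplus M_i\to M\to 0$; applying $\Hom(-,C)$ stays exact, so $\Ext^1(M,C)\hookrightarrow\Ext^1(\bigoplus M_i,C)=0$). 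You have reversed this order.

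There is a genuine gap in your first step. An ``Eklof-style vanishing lemma along a filtration of $M$ by small submodules'' does not yield $\Ext^1_R(M,C)=0$ here: Eklof's lemma needs the successive \emph{quotients} of the filtration to lie in ${}^{\perp_1}C=\Cogen{C}$, but $\Cogen{C}$ is closed under submodules, not under quotients, so a filtration of a direct limit by small submodules gives you no control over the factors. This is exactly the obstruction that forces Bazzoni to argue differently; the Eklof/filtration technology is native to the tilting side, not to cotilting classes. Your sketch therefore does not prove closure under direct limits without already knowing something like pure-injectivity of $C$.

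Your second step also has a gap. You assert $\Ext^1_R(\PI(C)/C,C)=0$, but neither definability nor the torsion-pair structure gives $\PI(C)/C\in\Cogen{C}$: definable classes are closed under pure submodules, products and direct limits, not under pure quotients, and the torsion radical of $\PI(C)/C$ lying in ${}^{\perp_0}C$ says nothing about $\Ext^1$. Unwinding the long exact sequence, $\Ext^1(\PI(C)/C,C)=0$ is equivalent to $C\to\PI(C)$ being split, which is what you are trying to prove. So as written this part is circular. (With additional work one can rescue an implication ``definability $\Rightarrow$ pure-injectivity'' for cotilting modules, but it is not the one-line consequence you suggest, and in any case Bazzoni runs the implication the other way.)
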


Next we prove a structure theorem for cotilting modules over an artinian ring.  We require the following definition that can be found in \cite[Sec.~3.4]{TrlifajHandbook} (referred to as a 1-rigid system). For tame hereditary artin algebras, similar ideas can be found in \cite{BuanKrause}.  

\begin{definition}\label{def: max rig}
Let $\mathcal{N}$ be a non-empty set of indecomposable pure-injective modules.  Then $\mathcal{N}$ is called a \textbf{rigid system} if the following conditions are satisfied. 
\begin{enumerate}
\item $\id{R}{M}\leq 1$ for all $M\in \mathcal{N}$.
\item $\Ext^1_R(M, N) = 0$ for all $M, N \in \mathcal{N}$.
\end{enumerate}
We refer to a rigid system $\mathcal{N}$ as \textbf{maximal} if the following property is satisfied:  if there exists a rigid system $\mathcal{L}$ such that $\Prod{\mathcal{N}} \subseteq \Prod{\mathcal{L}}$, then $\Prod{\mathcal{N}} = \Prod{\mathcal{L}}$.  Two maximal rigid systems $\mathcal{N}$ and $\mathcal{L}$ are \textbf{equivalent} if $\Prod{\mathcal{N}} = \Prod{\mathcal{L}}$.
\end{definition}

We will also need the following two lemmas.  The first is implicit in the proof of \cite[Thm.~3.7]{TrlifajHandbook} and the second can be found in \cite[Cor.~2.3]{BuanKrause}.  We include the proofs for completeness.  See Remark \ref{Rem: ess unique} for the definition of $\mathcal{N}_N$.

\begin{lemma}\label{Lem: id definable}
Let $R$ be a left noetherian ring.  For a pure-injective module $N$ with no superdecomposable part, the following statements are equivalent. \begin{enumerate}
\item $\id{R}{N} \leq 1$.
\item $\id{R}{M} \leq 1$ for all $M \in \mathcal{N}_N$.
\end{enumerate}
\end{lemma}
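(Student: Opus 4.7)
The plan is to show that the class $\mathcal{D} := \{L \in \Mod{R} : \id{R}{L} \leq 1\}$ is a definable subcategory of $\Mod{R}$, and then to deduce both implications from the Crawley-Boevey closure properties recalled earlier in the section.

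First I would establish definability of $\mathcal{D}$. Since $R$ is left noetherian, $\id{R}{L} \leq 1$ if and only if $\Ext_R^2(X,L) = 0$ for every $X \in \mod{R}$ (the standard reduction of injective dimension to finitely generated test modules via Baer's criterion and its higher-$\Ext$ analogue). For each such $X$, the noetherian hypothesis guarantees a projective resolution $\cdots \to P_1 \to P_0 \to X \to 0$ in which every $P_i$ is finitely generated, so in particular the syzygy $\Omega^2 X$ lies in $\mod{R}$. Consequently $\Ext_R^2(X,-)$ is the cokernel of the natural map $\Hom_R(P_1,-) \to \Hom_R(\Omega^2 X,-)$ and is therefore coherent in the sense of the definition given above. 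Hence $\mathcal{D}$ is the common zero locus of the set of coherent functors $\{\Ext_R^2(X,-)\}_{X \in \mod{R}}$, and so is a definable subcategory.

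The implication $(1) \Rightarrow (2)$ is then immediate: each $M \in \mathcal{N}_N$ is a direct summand of $N$ and therefore a pure submodule, and Crawley-Boevey's characterisation gives that $\mathcal{D}$ is closed under pure submodules, so $M \in \mathcal{D}$. For $(2) \Rightarrow (1)$, I would first observe that $\bigoplus_{M \in \mathcal{N}_N} M \in \mathcal{D}$: over a left noetherian ring arbitrary direct sums of injectives are injective, so term-wise summing injective coresolutions of length $\leq 1$ for each summand yields one for the direct sum (alternatively, this follows because $\mathcal{D}$ is definable, hence closed under finite direct sums and direct limits). By definition $N = \PI\bigl(\bigoplus_{M \in \mathcal{N}_N} M\bigr)$, and I would then invoke the standard fact that any definable subcategory is closed under pure-injective envelopes, concluding that $N \in \mathcal{D}$.

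The main subtle point in the plan is the final appeal to closure of $\mathcal{D}$ under pure-injective envelopes. If one wishes to avoid quoting it, the alternative is to apply the long exact sequence of $\Ext$ to the pure-exact sequence $0 \to \bigoplus_{M} M \to N \to N/\!\bigoplus_M M \to 0$; since each $X \in \mod{R}$ has a finitely generated projective resolution (noetherian hypothesis), the functors $\Ext_R^i(X,-)$ preserve pure-exact sequences, so the long exact sequence breaks into short exact sequences, and vanishing of $\Ext_R^2(X,-)$ on two of the three terms forces vanishing on the third. This gives closure of $\mathcal{D}$ under pure extensions, which combined with the direct sum step completes the argument.
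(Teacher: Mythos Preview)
Your main argument is correct and follows essentially the same route as the paper: establish that $\mathcal{D}$ is definable via coherence of the relevant $\Ext$-functors (the paper invokes Baer's criterion and the coherence of $\Ext_R^1(R/I,-)$, you use $\Ext_R^2(X,-)$ for general $X\in\mod{R}$, but these amount to the same observation), and then use closure of definable subcategories under direct summands, direct sums, and pure-injective envelopes.

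Your proposed alternative at the end, however, does not work as written. The assertion that the long exact sequence ``breaks into short exact sequences'' for a pure-exact input is not justified: from the short exact sequence of $\Hom$-complexes you obtain the usual long exact sequence in $\Ext$ with connecting homomorphisms, and there is no reason these vanish. More importantly, even if they did, you only know $\Ext_R^2(X,-)$ vanishes on $\bigoplus_{M} M$; nothing is known about the quotient $N/\bigoplus_M M$, so you do not have vanishing on two of the three terms, and ``closure under pure extensions'' is not the property you need. Just quote closure under pure-injective envelopes, as you do in the main line.
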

\begin{proof}
Since $R$ is left noetherian, we may apply Baer's criterion and conclude that the class of modules of injective dimension $\leq 1$ is a definable subcategory since the functor $\Ext^1_R(R/I,-)$ is a coherent functor for every ideal $I$.  Then, since definable subcategories are closed under direct summands, direct sums and pure-injective envelopes, the statement follows from the structure of $N$ described in Section \ref{Sec: purity}. 
\end{proof}

\begin{lemma}\label{Lem: self-orth}
Let $R$ be a left artinian ring.  For a pure-injective module $N$ with no superdecomposable part and $\id{R}{N} \leq 1$, the following statements are equivalent.
\begin{enumerate}
\item $\Ext^1_R(N, N) = 0$.
\item $\Ext^1_R(N^I, N) =0$ for all sets $I$.
\item $\Ext^1_R(M', M) = 0$ for all $M', M \in \mathcal{N}_N$.
\end{enumerate}
\end{lemma}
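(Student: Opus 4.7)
The implications (2) $\Rightarrow$ (1) and (1) $\Rightarrow$ (3) are immediate: for the first, take $I$ to be a singleton; for the second, Remark~\ref{Rem: ess unique} identifies each $M \in \mathcal{N}_N$ with an indecomposable direct summand of $N$, so $\Ext^1_R(M',M)$ is a direct summand of $\Ext^1_R(N,N) = 0$ for every pair $M',M \in \mathcal{N}_N$.

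The substantive implication is (3) $\Rightarrow$ (2). The plan is to show that for each $M \in \mathcal{N}_N$ the class ${}^{\perp_1}M$ is a definable subcategory of $\Mod{R}$, using Crawley-Boevey's characterisation recalled at the end of Section~\ref{Sec: purity}. Three of the closure properties fall out readily: closure under direct sums is automatic via the isomorphism $\Ext^1_R(\bigoplus X_i, M) \cong \prod_i \Ext^1_R(X_i,M)$; closure under submodules follows from $\id{R}{M} \leq 1$, which Lemma~\ref{Lem: id definable} deduces from $\id{R}{N} \leq 1$; closure under direct limits exploits pure-injectivity of $M$, via the canonical pure-exact presentation $0 \to K \to \bigoplus X_i \to \varinjlim X_i \to 0$, since pure-injectivity of $M$ keeps $\Hom_R(-,M)$ right exact on this sequence, the connecting homomorphism into $\Ext^1_R(\varinjlim X_i, M)$ vanishes, and the latter embeds into $\prod_i\Ext^1_R(X_i,M)$.

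Once ${}^{\perp_1}M$ is known to be definable, the proof proceeds in stages. Hypothesis (3) places every $M' \in \mathcal{N}_N$ in ${}^{\perp_1}M$; closure under direct sums gives $\bigoplus_{M' \in \mathcal{N}_N} M' \in {}^{\perp_1}M$; closure under pure-injective envelopes (a consequence of definability) gives $N = \PI\left(\bigoplus_{M' \in \mathcal{N}_N} M'\right) \in {}^{\perp_1}M$; and closure under direct products yields $N^I \in {}^{\perp_1}M$ for every set $I$. Thus $\Ext^1_R(N^I, M) = 0$ for every $M \in \mathcal{N}_N$. To upgrade this to $\Ext^1_R(N^I, N) = 0$, I would use pure-injectivity of $N$ in the second argument to reduce to finitely presented submodules $X' \subseteq N^I$ (an argument dual to the direct-limit closure above), and then apply $\Hom_R(X',-)$ to the pure-exact envelope sequence $0 \to \bigoplus_{M \in \mathcal{N}_N} M \to N \to Q \to 0$. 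Since $X'$ is finitely presented, $\Ext^1_R(X',-)$ commutes with this direct sum; since $R$ is noetherian, $\id{R}{\bigoplus_{M} M} \leq 1$ and hence $\Ext^2_R(X', \bigoplus_M M) = 0$; combined with $\Ext^1_R(X', M) = 0$ (inherited from $\Ext^1_R(N^I, M) = 0$ via the surjection $\Ext^1_R(N^I, M) \twoheadrightarrow \Ext^1_R(X', M)$ coming from $\id{R}{M} \leq 1$), this identifies $\Ext^1_R(X', N)$ with $\Ext^1_R(X', Q)$.

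The principal obstacle is the direct-product closure of ${}^{\perp_1}M$; this is where the hypothesis that $R$ is left artinian --- rather than merely noetherian --- appears to enter in an essential way, and I would aim to establish it via the equivalence $\Pinj{R} \simeq \Inj{\FunR}$ together with a functor-category duality that converts $\Ext^1_R$-vanishing into a condition manifestly compatible with products. A secondary obstacle is to force the residual $\Ext^1_R(X', Q)$ to vanish for each finitely presented $X' \subseteq N^I$; here I would exploit that $t(\bigoplus_M M) \hookrightarrow t(N)$ is an essential monomorphism in $\FunR$, which severely restricts the structure of $t(Q)$ and, combined with the finite presentation of $X'$, should force the vanishing.
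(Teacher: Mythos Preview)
Your proposal has a genuine gap and an unnecessary complication.

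\textbf{The gap.} You correctly identify that the crux is closure of ${}^{\perp_1}M$ under direct products, but you do not prove it. The three closure properties you verify (direct sums, submodules, direct limits) are \emph{not} the Crawley-Boevey criteria for definability: you need direct \emph{products}, not direct sums. Your proposed route via the equivalence $\Pinj{R} \simeq \Inj{\FunR}$ and an unspecified duality is too vague to constitute an argument. The paper handles this by citing \cite[Cor.~1.10]{BuanKrause} directly: for $R$ left artinian and $\id{R}{M} \leq 1$, the class ${}^{\perp_1}M$ is closed under products. This is where the artinian hypothesis is genuinely used, and it is a nontrivial input that should be invoked rather than re-derived.

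\textbf{The overcomplication.} Your passage from $\Ext^1_R(N^I,M)=0$ for all $M\in\mathcal{N}_N$ to $\Ext^1_R(N^I,N)=0$ via finitely presented submodules, the pure-exact envelope sequence, and the residual $\Ext^1_R(X',Q)$ is unnecessary. Since $\Ext^1_R$ takes products in the second variable to products, $\Ext^1_R(N^I,\prod_{M\in\mathcal{N}_N}M)\cong\prod_M\Ext^1_R(N^I,M)=0$; and $N$ is a direct summand of $\prod_M M$, so $\Ext^1_R(N^I,N)=0$ follows immediately.

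\textbf{Comparison with the paper.} The paper runs the cycle in the other direction, proving $(1)\Rightarrow(2)\Rightarrow(3)\Rightarrow(1)$. The step $(1)\Rightarrow(2)$ is exactly product closure of ${}^{\perp_1}N$ from \cite{BuanKrause}. For $(3)\Rightarrow(1)$, the paper sets $K:=\prod_{M\in\mathcal{N}_N}M$, notes $N$ is a summand of $K$, and shows $\Ext^1_R(K,K)=0$: product closure gives $\Ext^1_R(K,M)=0$ for each $M$, and then $\Ext^1_R(K,K)\cong\prod_M\Ext^1_R(K,M)=0$. This is cleaner than your route because it avoids pure-injective envelopes in the first variable entirely, and the passage to the second variable is the one-line product argument above.
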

\begin{proof}
The argument for \textbf{(2)$\Rightarrow$(3)} is straight forward.  To show \textbf{(1)$\Rightarrow$(2)} we apply \cite[Cor.~1.10]{BuanKrause}, which tells us that ${}^{\perp_1} N$ is closed under products.  

Finally we prove \textbf{(3)$\Rightarrow$(1)}.  Since $N$ has no superdecomposable part, we may write $N \cong \PI\left(\bigoplus_{M \in \mathcal{N}_N}M\right)$.  Moreover, $N$ is a direct summand of $K := \prod_{M\in \mathcal{N}_N}M$ and it therefore suffices to show that $\Ext_R^1(K, K) = 0$.  By \cite[Cor.~1.10]{BuanKrause} and Lemma \ref{Lem: id definable}, we have that ${}^{\perp_1} M$ is closed under products for each $M\in \mathcal{N}_N$.  Applying our assumption, we therefore have that $\Ext_R^1(K, M) =0$.  But then we have that $\Ext_R^1(K, K) \cong \prod_{M\in \mathcal{N}_N}\Ext_R^1(K, M) = 0$.
\end{proof}

\begin{remark}\label{Rem: Prod = Prod}
Let $\mathcal{N}$ be a set of indecomposable pure-injective modules and consider the modules $N := \PI\left(\bigoplus_{M\in \mathcal{N}} M \right)$ and $L := \prod_{M\in \mathcal{N}}M$.  Then $N$ is a direct summand of $L$ and so we have the following containments $\Prod {L} = \Prod{\mathcal{N}} \subseteq \Prod{ N} \subseteq  \Prod{ L}$.  That is, we have $\Prod{ L} = \Prod{\mathcal{N}} = \Prod{ N}$.
\end{remark}

In Theorem \ref{Thm: cotilt vs rigid} below, we prove that cotilting modulevs over artinian rings are determined by their indecomposable summands.  A slightly weaker version of this theorem can be found in \cite[Thm.~3.7]{TrlifajHandbook}.  Trlifaj produces a similar correspondence between cotilting \emph{classes} and (not necessarily maximal) rigid systems.  Our stronger version requires the following lemma, which follows immediately from \cite[Thm.~3.3]{MehdiPrest}.  A cotilting module is of \textbf{cofinite type} if  there is a set $\mathcal{S}$ of finitely generated right $R$-modules such that \[\Cogen{C} = \{ X\in \Mod{R} \mid \Tor_1^R(S, X) = 0 \text{ for all } S\in \mathcal{S}\}.\]

\begin{lemma}\label{Lem: no sup dec}
Let $R$ be a ring and let $C$ be a cotilting module of cofinite type.  Then there is a direct summand $D$ of $C$ that has no superdecomposable part and is cotilting.  In particular, if $R$ is left noetherian, then every cotilting module is equivalent to one with no superdecomposable part.
\end{lemma}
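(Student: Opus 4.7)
The plan is to apply the cited result \cite[Thm.~3.3]{MehdiPrest} more or less directly. By Theorem \ref{Thm: cotilting are pure-inj}, the cotilting module $C$ is pure-injective, and by assumption it is of cofinite type, so the hypotheses of that theorem are in force. The output of the theorem is a set $\mathcal{N}$ of indecomposable pure-injective modules together with a direct summand $D$ of $C$ of the form $\PI\left(\bigoplus_{M \in \mathcal{N}} M\right)$, and the identification $\Prod{D} = \Prod{C}$. The first of these properties says that $D$ has no superdecomposable part by construction, and the second says that $D$ is equivalent to $C$ in the sense of Definition \ref{Def: cotilting}, so that $\Cogen{D} = \Cogen{C} = {}^{\perp_1}C = {}^{\perp_1}D$ and $D$ is itself cotilting. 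This gives the first assertion.

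For the \emph{in particular} clause, the extra input needed is the fact that every cotilting module over a left noetherian ring is of cofinite type; I would invoke this as a known result in the literature (it holds in the much greater generality of arbitrary rings, via \v{S}\v{t}ov\'\i\v{c}ek's theorem, but a noetherian version was available earlier). Once this is in hand, the first part of the lemma applies to any cotilting $R$-module and produces the claimed equivalent module with no superdecomposable part.

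The only real obstacle is a careful reading of \cite[Thm.~3.3]{MehdiPrest} to extract exactly the statement used above: namely that the pure-injective envelope of a direct sum of suitably chosen indecomposable summands of $C$ is again a direct summand of $C$ and cogenerates the full class $\Cogen{C}$, rather than a proper subclass. Once $\Prod{D} = \Prod{C}$ is extracted from that theorem, the remaining steps are formal, using Theorem \ref{Thm: cotilting are pure-inj} to conclude that the cogenerated class is definable and hence determined by $\Prod{D}$, together with Remark \ref{Rem: Prod = Prod} to match direct sums with pure-injective envelopes.
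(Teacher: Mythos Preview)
Your overall strategy matches the paper's, but there is one step you elide that the paper makes explicit. The paper does \emph{not} feed ``cotilting of cofinite type'' directly into \cite[Thm.~3.3]{MehdiPrest}. Instead it first uses the cofinite-type hypothesis, via \cite[Thm.~15.18]{GT}, to realise $C$ as the character module $\Hom_\mathbb{Z}(T,\mathbb{Q}/\mathbb{Z})$ of a tilting right $R$-module $T$; it is to this dual-module presentation that \cite[Thm.~3.3]{MehdiPrest} is then applied, producing a set $\mathcal{N}$ of indecomposable direct summands of $C$ with $\Prod{C}=\Prod{\mathcal{N}}$. Your own caveat at the end is therefore exactly right: the hypothesis of the Mehdi--Prest result concerns character modules, so the cofinite-type assumption is consumed in passing to such a realisation rather than serving as a direct input. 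Once $\Prod{C}=\Prod{\mathcal{N}}$ is in hand, your argument and the paper's coincide (the paper invokes Remark~\ref{Rem: Prod = Prod} to pass to $D=\PI(\bigoplus_{N\in\mathcal{N}}N)$ and conclude it is cotilting equivalent to $C$). For the noetherian clause the paper cites \cite[Thm.~2.10]{AHPST}, which is the specific instance of the result you allude to.
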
 
\begin{proof}
Let $C$ be a cotilting module of cofinite type.  By \cite[Thm.~15.18]{GT}, there exists a tilting right $R$-module such that $C \cong \Hom_\mathbb{Z}(T, \mathbb{Q}/\mathbb{Z})$.  It follows from \cite[Thm.~3.3]{MehdiPrest} that there exists a set of $\mathcal{N}$ of indecomposable direct summands of $C$ such that $\Prod{C} = \Prod{\mathcal{N}}$.  Then, by Remark \ref{Rem: Prod = Prod}, we have that $\PI\left(\bigoplus_{N\in \mathcal{N}} N \right)$ is a cotilting module equivalent to $C$.  The last statement follows from \cite[Thm.~2.10]{AHPST}.
\end{proof}

\begin{theorem}\label{Thm: cotilt vs rigid}
Let $R$ be a left artinian ring. The assignments \[ C \mapsto \mathcal{N}_{C} \: \: \: \text{ and } \: \: \: \mathcal{N} \mapsto \prod_{M\in \mathcal{N}} M \] are mutually inverse bijections between the set of cotilting modules $C$ in $\Mod{R}$ (up to equivalence) and the set of maximal rigid systems $\mathcal{N}$ in $\Mod{R}$ (up to equivalence). 
\end{theorem}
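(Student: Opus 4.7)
The strategy is to check well-definedness of both assignments and then mutual inverseness. Since $R$ is left noetherian, Lemma \ref{Lem: no sup dec} lets me assume throughout that each cotilting module under consideration has no superdecomposable part.

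\textbf{$C \mapsto \mathcal{N}_C$ is well-defined into maximal rigid systems.} For such $C$, Remark \ref{Rem: ess unique} makes $\mathcal{N}_C$ well-defined. Conditions (1) and (2) of Definition \ref{def: max rig} follow from Lemmas \ref{Lem: id definable} and \ref{Lem: self-orth}, using $\id{R}{C} \leq 1$ and $\Ext_R^1(C,C)=0$, respectively. For maximality, I take a rigid system $\mathcal{L}$ with $\Prod{\mathcal{N}_C}\subseteq\Prod{\mathcal{L}}$ and any $L \in \mathcal{L}$, and aim to show $L \in \Prod{C}$. Combining the rigidity of $\mathcal{N}_C \cup \{L\}$ with the product closure of ${}^{\perp_1}(-)$ provided by \cite[Cor.~1.10]{BuanKrause} (applicable since $\id{R}{L}\leq 1$ and $\id{R}{M}\leq 1$ for every $M \in \mathcal{N}_C$) yields $\Ext_R^1(L,C) = 0 = \Ext_R^1(C,L)$. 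The first gives $L \in {}^{\perp_1}C = \Cogen{C}$, so $\Cogen{(C \oplus L)} = \Cogen{C}$; the second, via another product-closure argument, gives $\Cogen{C} = {}^{\perp_1}C \subseteq {}^{\perp_1}L$, and hence $\Cogen{(C \oplus L)} = {}^{\perp_1}(C \oplus L)$. Thus $C \oplus L$ is cotilting with the same cogenerated class as $C$, hence equivalent to $C$, forcing $L \in \Prod{C}$.

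\textbf{$\mathcal{N} \mapsto \prod_{M \in \mathcal{N}} M$ is well-defined into cotilting modules.} By Remark \ref{Rem: Prod = Prod}, this product has the same $\Prod$ as $N := \PI\left(\bigoplus_{M \in \mathcal{N}} M\right)$, so it suffices to prove $N$ is cotilting. Lemmas \ref{Lem: id definable} and \ref{Lem: self-orth} deliver $\id{R}{N} \leq 1$ and self-$\Ext$-orthogonality. The cogeneration axiom $\Cogen{N} = {}^{\perp_1}N$ is the main obstacle: to obtain it I would invoke the weaker form of the correspondence due to Trlifaj (\cite[Thm.~3.7]{TrlifajHandbook}), which assigns to $\mathcal{N}$ a cotilting class, and then use the standard bijection between cotilting classes of cofinite type and cotilting modules up to equivalence (applicable to every cotilting module since $R$ is noetherian, by \cite[Thm.~2.10]{AHPST}) to obtain a cotilting module $C'$ representing this class. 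Applying the first direction to $C'$ yields a maximal rigid system $\mathcal{N}_{C'} \supseteq \mathcal{N}$, so maximality of $\mathcal{N}$ forces $\Prod{\mathcal{N}} = \Prod{\mathcal{N}_{C'}} = \Prod{C'}$, and $N$ is equivalent to $C'$.

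\textbf{Mutual inverses.} For cotilting $C$ (with no superdecomposable part), Remark \ref{Rem: Prod = Prod} yields $\Prod{\prod_{M\in\mathcal{N}_C}M} = \Prod{C}$. For maximal rigid $\mathcal{N}$, Remark \ref{Rem: ess unique} identifies the indecomposable direct summands of $N = \PI(\bigoplus M)$ with $\mathcal{N}$ up to isomorphism. The principal difficulty is the second step: extracting a cotilting module from an abstract maximal rigid system, for which I would lean on the pre-existing bijection with cotilting classes and the fact that maximality lifts the weaker equivalence of Trlifaj to equality of $\Prod$.
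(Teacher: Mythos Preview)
Your first direction (maximality of $\mathcal{N}_C$) is correct and is essentially the content of what the paper cites as \cite[Prop.~3.1]{BuanKrause}: you have unpacked that argument directly, showing $C\oplus L$ is cotilting with the same class, rather than citing it. So there the two proofs coincide.

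The second direction is where your route genuinely diverges. The paper does not pass through Trlifaj's correspondence with cotilting classes; instead it invokes \cite[Cor.~1.12]{BuanKrause}, which says that any pure-injective module $C$ with $\id{R}{C}\leq 1$ and $\Ext^1_R(C,C)=0$ admits a pure-injective complement $N$ with $C\oplus N$ cotilting. Applying this to $C = \PI\bigl(\bigoplus_{M\in\mathcal{N}} M\bigr)$, one obtains (via Lemma~\ref{Lem: no sup dec}) that $\mathcal{N}\cup\mathcal{N}_N$ is again a rigid system containing $\mathcal{N}$, so maximality forces $\Prod{C} = \Prod{C\oplus N}$ and hence $\Cogen{\prod_{M\in\mathcal{N}} M} = {}^{\perp_1}(\prod_{M\in\mathcal{N}} M)$. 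This keeps the argument entirely at the level of modules: the completion lemma is the only external input, and the containment of rigid systems is immediate because $C$ is visibly a summand of $C\oplus N$.

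Your route is not wrong, but the step ``$\mathcal{N}_{C'}\supseteq\mathcal{N}$'' is asserted without justification and is precisely the non-obvious point. To fill it you must show each $M\in\mathcal{N}$ lies in $\Prod{C'}$, i.e.\ that $M$ is Ext-injective in $\Cogen{C'}$; this requires identifying Trlifaj's cotilting class explicitly (presumably as $\bigcap_{M'\in\mathcal{N}}{}^{\perp_1}M'$) and then arguing via rigidity and $\id{R}{M}\leq 1$. So the gap is fillable, but it depends on unpacking the black box you invoked, and once you do so you are essentially reproving the completion statement the paper uses directly. The paper's route is therefore shorter and avoids the detour through cotilting classes.
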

\begin{proof} Let $C$ be a cotilting module.  By Lemma \ref{Lem: no sup dec}, we may assume that $C$ has no superdecomposable part.  By Lemma \ref{Lem: self-orth} and Lemma \ref{Lem: id definable} we have that $\mathcal{N}_C$ is a rigid system so it remains to show that $\mathcal{N}_C$ is maximal.  Suppose $\mathcal{N}$ is a rigid system with $\Prod{\mathcal{N}_C} \subseteq \Prod{\mathcal{N}}$.  Then let $N := \PI\left(\bigoplus_{M\in \mathcal{N}}M\right)$ and note that, since $\bigoplus_{M\in \mathcal{N}_C}M$ is a direct summand of $\bigoplus_{M\in \mathcal{N}}M$, it follows that $C$ is a direct summand of $N$.  We may then argue as in the proof of (1) $\Rightarrow$ (2) in \cite[Prop.~3.1]{BuanKrause} to obtain that $\Prod{ C} = \Prod{ N}$ and hence that $\Prod{\mathcal{N}_C} = \Prod{\mathcal{N}}$ by Remark \ref{Rem: Prod = Prod}.

Let $\mathcal{N}$ be a maximal rigid system and consider $C := \PI\left(\bigoplus_{M\in \mathcal{N}} M \right)$.  Then $\mathcal{N} = \mathcal{N}_C$ by Remark \ref{Rem: ess unique} and so we have that $\Ext^1_R(C, C) = 0$ and $\id{R}{C}\leq1$ by Lemma \ref{Lem: self-orth} and Lemma \ref{Lem: id definable}.  By \cite[Cor.~1.12]{BuanKrause}, there exists a pure-injective module $N$ such that $C \oplus N$ is cotilting.  Note that $\mathcal{N}_{C\oplus N} = \mathcal{N}_C\cup \mathcal{N}_N = \mathcal{N}\cup \mathcal{N}_N$ and so, by applying Lemma \ref{Lem: no sup dec}, we can ensure that $N$ has no superdecomposable part.  As $\mathcal{N}\cup \mathcal{N}_N$ is a rigid system containing $\mathcal{N}$, we have $\Prod{\prod_{M\in \mathcal{N}} M} = \Prod{C} = \Prod{\mathcal{N}} = \Prod{\mathcal{N}\cup \mathcal{N}_N} = \Prod{N\oplus C}$ by the maximality of $\mathcal{N}$ and Remark \ref{Rem: Prod = Prod}.  This implies that \[\Cogen{\prod_{M\in \mathcal{N}}M} = \Cogen{N\oplus C} = {}^{\perp_1}(N\oplus C) = {}^{\perp_1}\left( \prod_{M\in \mathcal{N}}M\right)\] and so $\prod_{M\in \mathcal{N}}M$ is a cotilting module.

The fact that the assignments are mutually inverse bijections follows immediately from Remark \ref{Rem: Prod = Prod}.
\end{proof}

\subsection{Cosilting modules over artinian rings.}

Cosilting modules over a ring $R$, introduced by Breaz and Pop \cite{BreazPop}, are a generalisation of cotilting modules over $R$.  In fact, the cotilting modules are exactly the faithful cosilting modules.

\begin{definition}
A module $C$ over $R$ is a \textbf{cosilting module} if there exists an injective copresentation $0 \to C \to E_0 \overset{\sigma}{\to} E_1$ such that $\Cogen{C} = \{ M \in \Mod{R} \mid \Hom_R(M, \sigma) \text{ is surjective}\}$.  Two cosilting modules $C_1$ and $C_2$ are said to be \textbf{equivalent} if $\Prod{C_1} = \Prod{C_2}$.
\end{definition}

In the later sections we will make use of the following useful characterisation of cosilting modules.  For this precise formulation, see \cite[Thm.~3.6]{abundance}.

\begin{proposition}[\cite{ZW}]\label{Prop: Cosilt is unfaithful cotilt}
Let $R$ be a ring.  An $R$-module $C$ is a cosilting module if and only if $C$ satisfies the following two conditions:\begin{enumerate} \item $C$ is a cotilting module over $R / \ann{C}$ where $\ann{C} := \{ r \in R \mid r.C = 0\}$.
\item $\Cogen{C}$ is a torsion-free class in $\Mod{R}$.
\end{enumerate}
\end{proposition}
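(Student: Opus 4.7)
The plan rests on the identification $\mathrm{Cogen}_R(C) = \mathrm{Cogen}_{\bar R}(C)$, where $\bar R := R/\ann{C}$ (since any submodule of a power $C^I$ is annihilated by $\ann{C}$), together with the fact, noted just before the proposition, that cotilting modules are precisely the faithful cosilting modules. Working over $\bar R$ turns $C$ into a faithful module, where cosilting and cotilting coincide, so the proof of both directions reduces to transferring the cosilting data between $\Mod{R}$ and $\Mod{\bar R}$.

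For the forward direction, suppose $0 \to C \to E_0 \xrightarrow{\sigma} E_1$ is an injective copresentation witnessing $C$ as cosilting over $R$. Condition (2) follows from the standard check that the class $\{M : \Hom_R(M, \sigma) \text{ is surjective}\}$ is closed under extensions (via the long $\Ext$-sequence), submodules (via injectivity of $E_1$), and direct products; as this class equals $\Cogen{C}$ by assumption, it is a torsion-free class. For (1), the fixed submodules $\bar E_i := \{e \in E_i : \ann{C}\cdot e = 0\}$ are injective $\bar R$-modules, since the functor $\Hom_R(\bar R, -)$ preserves injectives, and applying this left-exact functor to the copresentation produces an injective copresentation $0 \to C \to \bar E_0 \to \bar E_1$ in $\Mod{\bar R}$. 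Because $\Cogen{C} \subseteq \Mod{\bar R}$, one checks directly that this sequence witnesses $C$ as cosilting over $\bar R$, and faithfulness upgrades the conclusion to cotilting.

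For the backward direction, start with the cotilting injective copresentation $0 \to C \to \bar E_0 \to \bar E_1 \to 0$ over $\bar R$ and lift to $\Mod{R}$ by embedding $\bar E_0$ into an injective envelope $E_0$ of $C$ in $\Mod{R}$ and embedding the cokernel $E_0/C$ into an injective $R$-module $E_1$, yielding $0 \to C \to E_0 \xrightarrow{\sigma} E_1$. It then suffices to prove $\Cogen{C} = \{M \in \Mod{R} : \Hom_R(M, \sigma) \text{ is surjective}\}$: the inclusion $(\subseteq)$ transports the cotilting equality $\mathrm{Cogen}_{\bar R}(C) = {}^{\perp_1}_{\bar R}(C)$ to the lifted copresentation, while $(\supseteq)$ uses hypothesis (2) to force the torsion part of $M$ (in the torsion pair determined by $\Cogen{C}$) to vanish. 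The principal obstacle is precisely this final verification, because $\Ext^1_R$ over the larger ring need not coincide with $\Ext^1_{\bar R}$, so the cotilting Ext-vanishing must be tracked carefully through the lifted copresentation. Hypothesis (2) is essential here: requiring $\Cogen{C}$ to be torsion-free in all of $\Mod{R}$, and not merely in $\Mod{\bar R}$, is exactly what prevents pathological $R$-modules outside $\Mod{\bar R}$ from obstructing the characterization.
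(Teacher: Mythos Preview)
The paper does not prove this proposition; it is quoted from the literature (attributed to \cite{ZW}, with the precise formulation credited to \cite[Thm.~3.6]{abundance}). So there is no argument in the paper to compare yours against, and your attempt must be judged on its own.

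Your forward direction is essentially correct. The adjunction $\Hom_{\bar R}(M,\Hom_R(\bar R,E_i))\cong\Hom_R(M,E_i)$ transports the cosilting copresentation cleanly to $\Mod{\bar R}$, and faithfulness there upgrades cosilting to cotilting. For condition~(2), closure of $\Cogen(C)$ under subobjects and products is automatic; only extension-closure needs the $\sigma$-description, and your snake-lemma sketch handles that.

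The backward direction, however, has a genuine gap that your final paragraph names but does not close. With your $\sigma\colon E_0\to E_1$ (where $E_0$ is the $R$-injective envelope of $C$ and $E_1$ an injective containing $E_0/C$), the image $\operatorname{im}(\sigma)=E_0/C$ is typically \emph{not} an $\bar R$-module, because $\ann(C)$ does not in general annihilate the $R$-injective envelope $E_0$. Hence for $M\in\Cogen(C)\subseteq\Mod{\bar R}$ there may be maps $M\to E_1$ landing in the $\ann(C)$-fixed submodule $\bar E_1\subseteq E_1$ but outside $E_0/C$; such maps cannot factor through $\sigma$, so the inclusion $(\subseteq)$ fails for this choice of $\sigma$. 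The cotilting equality over $\bar R$ (combined with extension-closure) only yields $\Ext^1_R(M,C)=0$, i.e.\ surjectivity of $\Hom_R(M,E_0)\to\Hom_R(M,E_0/C)$, not of $\Hom_R(M,E_0)\to\Hom_R(M,E_1)$. The inclusion $(\supseteq)$ is likewise only gestured at: that $D_\sigma$ is closed under submodules puts $t(M)\in D_\sigma$, but nothing you have written forces $t(M)=0$. The published proofs avoid constructing an explicit $R$-copresentation and instead pass through an intrinsic characterisation of cosilting modules (Ext-injectivity of $\Prod{C}$ within the torsion-free class together with $\Prod{C}$-approximation sequences), properties which transfer directly along the extension-closed, fully faithful embedding $\Mod{\bar R}\hookrightarrow\Mod{R}$ guaranteed by hypothesis~(2).
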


\noindent Combining this characterisation with Theorem \ref{Thm: cotilting are pure-inj}, we obtain the following corollary.

\begin{corollary}[\cite{ZW}, \cite{BreazPop}]\label{Cor: def cosilt}
Every cosilting module $C$ over $R$ is a pure-injective module and $\Cogen{C}$ is a definable subcategory.
\end{corollary}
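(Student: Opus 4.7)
The plan is to reduce to the cotilting case over the quotient ring $\bar{R} := R/\ann{C}$, where Theorem \ref{Thm: cotilting are pure-inj} applies directly, and then transfer both conclusions back to $\Mod{R}$. Since the statement is explicitly advertised as a combination of Proposition \ref{Prop: Cosilt is unfaithful cotilt} with Theorem \ref{Thm: cotilting are pure-inj}, the substance of the proof lies entirely in these transfer arguments.

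First I would invoke Proposition \ref{Prop: Cosilt is unfaithful cotilt} to conclude that $C$ is a cotilting module over $\bar{R}$ and that $\Cogen{C}$ is a torsion-free class in $\Mod{R}$. Applying Theorem \ref{Thm: cotilting are pure-inj} to $C$ as an $\bar{R}$-module, $C$ is pure-injective in $\Mod{\bar{R}}$ and $\Cogen_{\bar{R}}(C)$ is definable in $\Mod{\bar{R}}$.

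For pure-injectivity in $\Mod{R}$, I would use the functorial characterisation: $C$ is pure-injective if and only if, for every set $I$, the summation map $\sigma \colon C^{(I)} \to C$ extends along the inclusion $C^{(I)} \hookrightarrow C^I$. The key observation is that the direct sum and the direct product of copies of $C$, as well as the summation map, agree whether computed in $\Mod{R}$ or in $\Mod{\bar{R}}$, because every $\bar{R}$-linear morphism is $R$-linear. The extension provided by pure-injectivity over $\bar{R}$ therefore also provides an extension over $R$, yielding pure-injectivity of $C$ in $\Mod{R}$.

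For definability of $\Cogen_R(C)$, the first step is to note that any $R$-submodule of a module annihilated by $\ann{C}$ is again annihilated by $\ann{C}$, so $\Cogen_R(C) \subseteq \Mod{\bar{R}}$ and in fact $\Cogen_R(C) = \Cogen_{\bar{R}}(C)$ as full subcategories. I would then verify the Crawley--Boevey closure conditions recalled at the end of Section \ref{Sec: purity}: closure under direct limits follows since restriction of scalars commutes with direct limits, so direct limits in $\Mod{R}$ of $\bar{R}$-modules coincide with those in $\Mod{\bar{R}}$, where $\Cogen_{\bar{R}}(C)$ is closed under them by Theorem \ref{Thm: cotilting are pure-inj}; closure under products is immediate for the same reason; and closure under pure submodules (in $\Mod{R}$) follows from the fact that $\Cogen_R(C)$ is a torsion-free class and is therefore closed under arbitrary $R$-submodules.

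The only slight obstacle is keeping the two categories $\Mod{R}$ and $\Mod{\bar{R}}$ cleanly separated: one must check that pure-injectivity, products, and direct limits are preserved and reflected by restriction of scalars along $R \twoheadrightarrow \bar{R}$. All of these are routine once one uses the functorial criterion above for pure-injectivity and the fact that restriction of scalars along a surjection is fully faithful and commutes with all (co)limits.
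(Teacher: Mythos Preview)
Your proposal is correct and follows precisely the approach the paper indicates: the paper gives no explicit proof but simply states that the corollary is obtained by ``combining this characterisation [Proposition \ref{Prop: Cosilt is unfaithful cotilt}] with Theorem \ref{Thm: cotilting are pure-inj}'', and you have supplied the routine transfer arguments (via the summation-map criterion for pure-injectivity and the Crawley--Boevey closure conditions for definability) that make this combination work.
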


If $R$ is a left artinian ring, then $R/I$ is left artinian for any ideal $I$ in $R$.  Thus we may combine the results of Section \ref{Sec: art cotilt} with Proposition \ref{Prop: Cosilt is unfaithful cotilt} to obtain the following characterisation of cosilting modules over an artinian ring.

\begin{corollary}\label{cor: max rig cosilt}
Let $R$ be a left artinian ring.  Then a pure-injective module $C$ is a cosilting module if and only if $C$ satisfies the following conditions: \begin{enumerate}
\item There exists a maximal rigid system $\mathcal{N}$ in $\Mod{R/\ann{C}}$ such that $\Prod{C} = \Prod{\mathcal{N}}$.
\item $\Cogen{C}$ is a torsion-free class in $\Mod{R}$.
\end{enumerate}
\end{corollary}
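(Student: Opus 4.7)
The strategy is to apply Proposition \ref{Prop: Cosilt is unfaithful cotilt}, which splits the cosilting condition over $R$ into a cotilting condition over $S := R/\ann{C}$ together with the torsion-free condition (2), and then to feed this into Theorem \ref{Thm: cotilt vs rigid} applied to the ring $S$. The annihilator $\ann{C}$ is a two-sided ideal of $R$ (since $(rs)c = r(sc) = 0$ whenever $r \in \ann{C}$), so $S$ is a quotient of a left artinian ring and is therefore itself left artinian, making Theorem \ref{Thm: cotilt vs rigid} available over $S$.

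For the forward direction, suppose $C$ is cosilting over $R$. Proposition \ref{Prop: Cosilt is unfaithful cotilt} then delivers (2) immediately and also tells us that $C$ is a cotilting $S$-module. By Lemma \ref{Lem: no sup dec}, $C$ is equivalent over $S$ to a cotilting module $C'$ with no superdecomposable part, and Theorem \ref{Thm: cotilt vs rigid} then makes $\mathcal{N}_{C'}$ into a maximal rigid system in $\Mod{S}$. Remark \ref{Rem: Prod = Prod} finally yields
\[
\Prod{C} = \Prod{C'} = \Prod{\mathcal{N}_{C'}},
\]
which is exactly (1).

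For the reverse direction, assume (1) and (2) and let $\mathcal{N}$ be a maximal rigid system in $\Mod{S}$ with $\Prod{C} = \Prod{\mathcal{N}}$. Theorem \ref{Thm: cotilt vs rigid} produces the cotilting $S$-module $D := \prod_{M \in \mathcal{N}} M$, and Remark \ref{Rem: Prod = Prod} gives $\Prod{D} = \Prod{\mathcal{N}} = \Prod{C}$; hence $\Cogen{C} = \Cogen{D}$ and ${}^{\perp_1}C = {}^{\perp_1}D$ inside $\Mod{S}$, so $C$ is itself cotilting over $S$. Combined with (2), the converse half of Proposition \ref{Prop: Cosilt is unfaithful cotilt} concludes that $C$ is cosilting over $R$. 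No step here is genuinely difficult; the one place to be careful is the bookkeeping one of verifying that notions such as $\Prod{-}$, $\Cogen{-}$, and $\Ext^1$-vanishing transfer cleanly between $R$-modules annihilated by $\ann{C}$ and $S$-modules, which is where the hypothesis that $C$ lives in $\Mod{S}$ (and the fact that quotients of left artinian rings are left artinian) are used.
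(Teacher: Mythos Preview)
Your proof is correct and follows exactly the route the paper intends: the paper simply states that the corollary follows by combining Proposition~\ref{Prop: Cosilt is unfaithful cotilt} with Theorem~\ref{Thm: cotilt vs rigid} applied to the left artinian quotient $R/\ann{C}$, and you have filled in those details faithfully.
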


\section{Cosilting modules over cluster-tilted algebras.}\label{Sec: cluster-tilted}

Throughout this section we fix an algebraically closed field $\k$.  This assumption is needed in order to appeal to the classification of indecomposable pure-injective modules over a domestic string algebra used in Theorem \ref{Thm: ind pinj}.
%
\subsection{Cluster-tilted algebras of type $\tilde{A}$.}\label{Sec: type A}

Cluster-tilted algebras were introduced by Buan, Marsh and Reiten (\cite{BMR}) as endomorphism algebras of cluster-tilting objects in 
cluster categories: If $\mathcal C_Q$ is the cluster category of an acyclic quiver $Q$ and $T$ a cluster-tilting object in $\mathcal C_Q$, 
the endomorphism ring $\End_{\mathcal C_Q}(T)$ is a {\bf cluster-tilted algebra} of type $Q$. 
We will use the characterisation of cluster-tilted algebras through surface triangulations: 
Assem et al. show in~\cite[Theorem 1.1]{ABCJP} that the cluster-tilted algebras of type $\tilde{A}$ are the 
gentle algebras arising from triangulations 
of an annulus. 

Let $\mathrm{S}$ be an oriented surface and $\mathrm{M}$ a nonempty finite set of points in the boundary of $\mathrm{S}$ such that each boundary component 
has at least one marked point. 
An {\bf arc} in $\mathrm{S}$ is a non self-intersecting curve $\gamma \colon [0,1] \to \mathrm{S}$ with endpoints in $\mathrm{M}$ whose 
interior is disjoint from the boundary of $\mathrm{S}$ and which does not cut out a monogon. 
The arc is a \textbf{boundary arc} if it cuts out a digon, it is \textbf{internal} otherwise. Arcs are considered up to homotopy fixing 
endpoints. A \textbf{triangulation} of a $\mathrm{S}$ (and $\mathrm{M}$) 
is a maximal collection of pairwise non-crossing internal arcs. For details we refer to~\cite{FST}. 
It is known that in the case of an annulus, the number of internal arcs of any triangulation is equal to the 
number of marked points 
on the two boundary components, \cite[Proposition 2.10]{FST}. 
Let $\Gamma$ be a triangulation of an annulus. Then its quiver $Q=Q(\Gamma)$ has as vertices the set $Q_0 := \{1,2,\dots,m\}$ indexed by the set $\{\gamma_1, \dots,\gamma_m\}$ of arcs in $\Gamma$. The set of arrows $Q_1$ is defined to consist of arrows defined as follows: for any two internal arcs $\gamma_i,\gamma_j$ in $\Gamma$ bounding a common triangle we draw an arrow $a \colon i\to j$ if $j$ is clockwise from $i$ (using the orientation of $\mathrm{S}$).  As usual, the quiver $Q$ comes with assignments $s, t \colon Q_1 \to Q_0$ such that $s(a) = i$ and $t(a) = j$.

A triangle $\triangle$ in $\Gamma$ is \textbf{internal} if all its edges are internal arcs. Every internal 
triangle gives rise to an oriented cycle $a_{\triangle}b_{\triangle}c_{\triangle}$ in $Q$ 
(up to cyclic permutation). Then the algebra $A(\Gamma)$ is defined to be the quotient 
$\k Q/I(\Gamma)$ where $I(\Gamma)$ is the ideal generated by the paths 
$a_{\triangle}b_{\triangle},b_{\triangle}c_{\triangle},c_{\triangle}a_{\triangle}$ for 
all triangles $\triangle$ in $\Gamma$. This is the (noncompleted) Jacobian algebra of $Q$ 
with the potential arising from all the cycles of $\Gamma$ \cite{DWZ,Keller}. 

Since we are working with cluster-tilted algebras of type $\tilde{A}$, we will from now on we will 
fix the surface $(\mathrm{S},\mathrm{M})$ to be an annulus with $p$ respectively $q$ marked 
points on the two boundary components, $p,q>0$.  We will also fix a triangulation $\Gamma$ 
and the algebra $A(\Gamma)$.  Unless otherwise stated, the notation $Q$ will refer to the quiver $Q(\Gamma)$ i.e.~$A(\Gamma) \cong = \k Q/I(\Gamma)$.

%
\subsubsection{Geometric model.}

As well as allowing us to define $A(\Gamma)$, the surface $(\mathrm{S},\mathrm{M})$ provides us with a geometric model of the module category.  Internal arcs will simply be called arcs. Arcs with both endpoints at the same boundary 
are called {\bf peripheral} arcs, arcs connecting the two boundaries are called {\bf bridging}. It is customary to draw an annulus and its triangulation on its universal cover $U=U_{p,q}$. This is a horizontal strip in the plane, with a projection map to the annulus. We will label marked points on the boundary components by integers, indicating the upper boundary with a prime. We arrange the marked points so that $0'$ is above $0$ in the universal cover. In every copy of the fundamental domain, there are $p$ marked points on the lower boundary and $q$ marked points on the upper boundary. 
See Figure~\ref{fig:annulus} for an example with $p=3$ and $q=2$.

\begin{figure}
\begin{center}
\includegraphics[width=8cm]{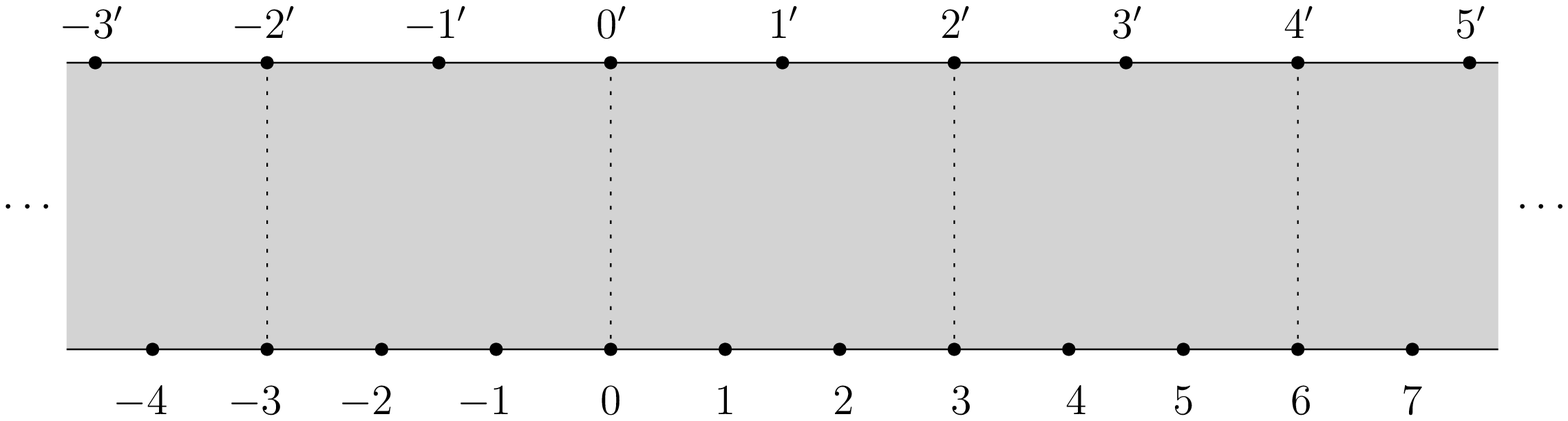} \hskip.5cm $\stackrel{\pi}{\longrightarrow}$ \hskip.5cm 
\includegraphics[width=3cm]{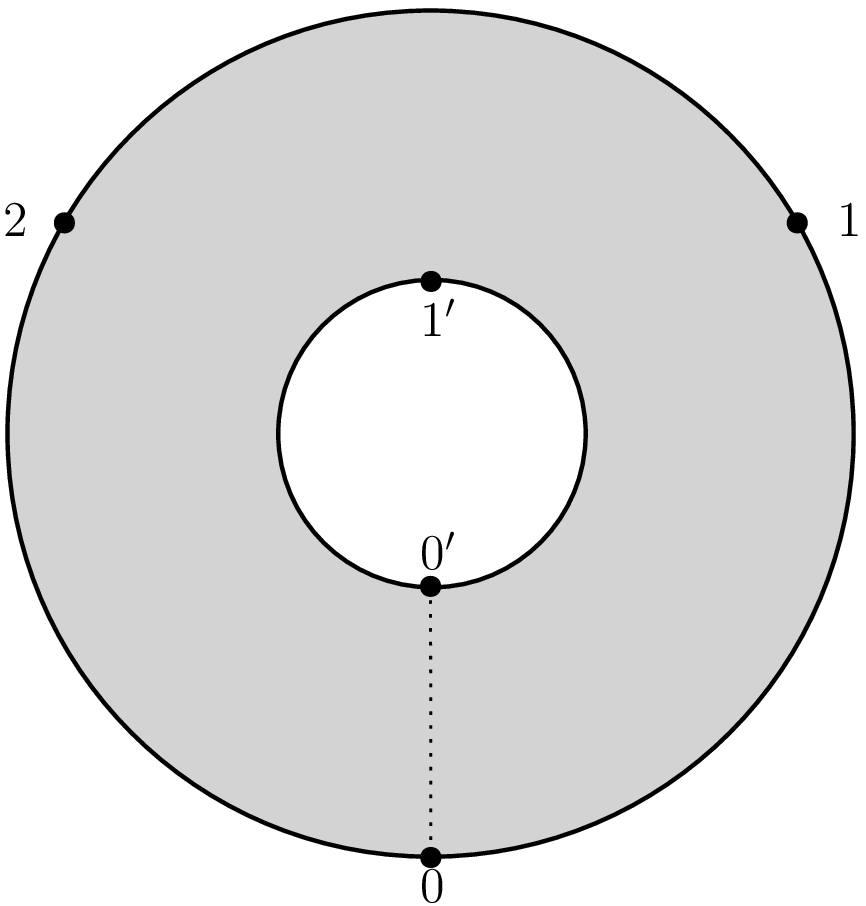} 
\end{center}
\caption{Annulus with universal cover $U_{3,2}$}\label{fig:annulus}
\end{figure}

As an example, we consider a triangulation of an annulus with three marked points 
on the outer boundary and 2 marked points 
on the inner boundary. See Figure~\ref{fig:triangulation}. The associated quiver is on the right. 

\begin{figure}
\begin{center}
\includegraphics[width=7cm]{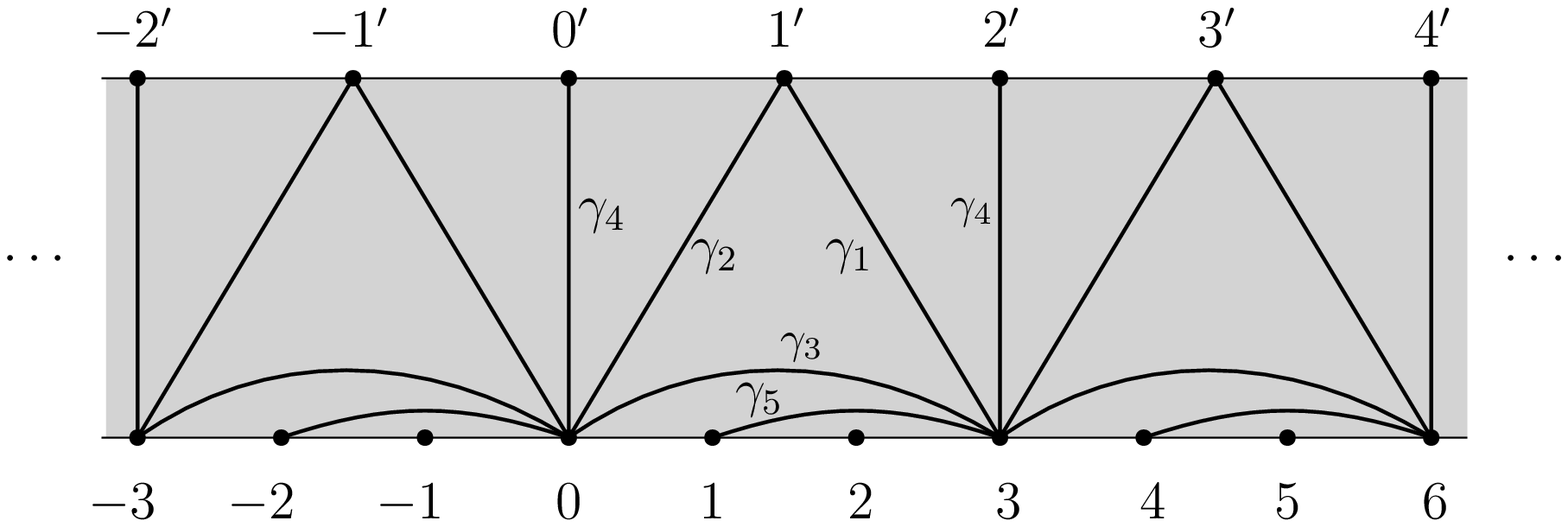} 
\hskip .1cm
\includegraphics[width=2.7cm]{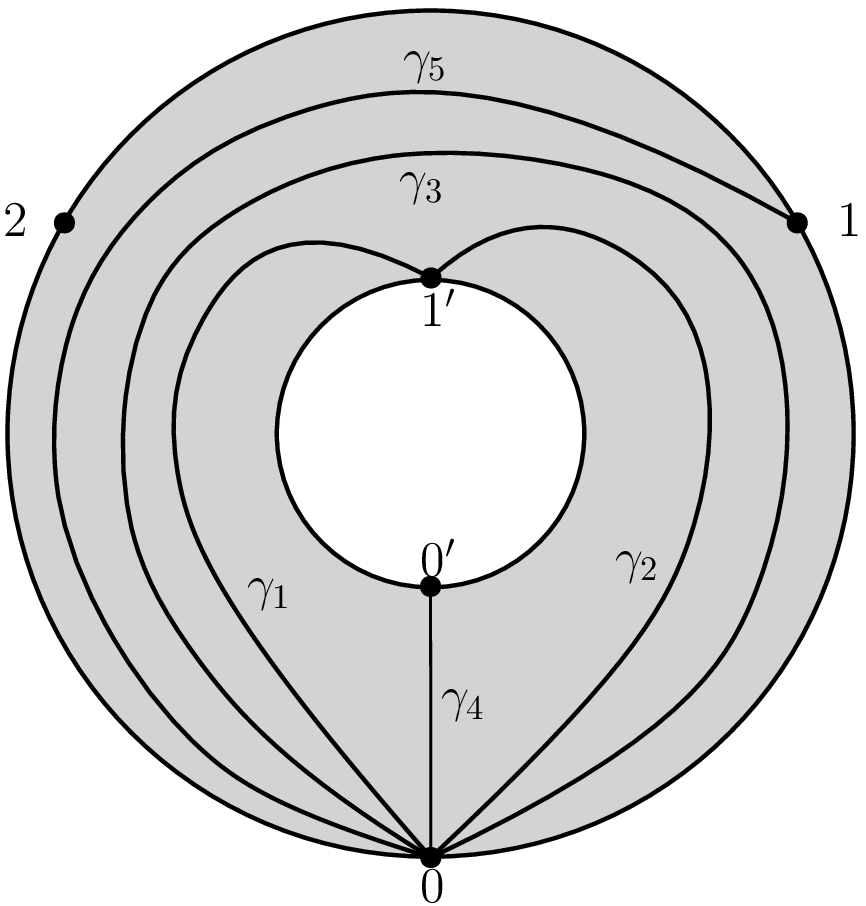} 
\hskip .2cm
\includegraphics[width=3.3cm]{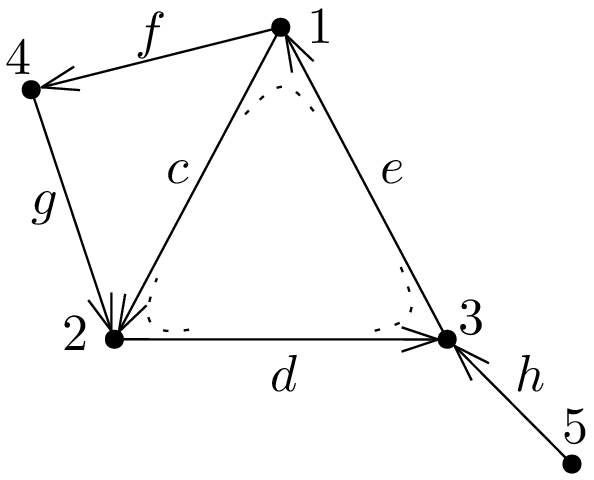} 
\end{center}
\caption{Triangulation in the cover, in the annulus and its quiver}\label{fig:triangulation}
\end{figure}

%
\subsection{Towards maximal rigid systems.}\label{sec: max rig}

In Section \ref{Sec: Asymp_tri} we classify the cosilting $A(\Gamma)$-modules using Corollary \ref{cor: max rig cosilt} i.e.~we identify the pure-injective modules $C$ such that the indecomposable summands of $C$ form a maximal rigid system in $\Mod{A(\Gamma)/\ann{C}}$ and $\Cogen{C}$ is a torsion-free class in $\Mod{A(\Gamma)}$.    

Recall that the elements of a maximal rigid system are pairwise $\Ext$-orthogonal indecomposable pure-injective modules that have injective dimension less than or equal to $1$.  In this section, we set out some preliminary results that will allow us to identify maximal rigid systems in the subsequent section.  In particular, in Section \ref{sec: ind pinj} we consider indecomposable pure-injective modules; in Section \ref{Sec: inj dim} we consider conditions for injective dimension less than or equal to 1; and in Section \ref{Sec: Extensions in A} we consider extensions between indecomposable pure-injective modules.  

\subsubsection{Indecomposable pure-injective modules.}\label{sec: ind pinj}

Here we describe indecomposable finite-dimensional modules for $A(\Gamma)$ 
via arcs in the annulus as introduced in \cite{BM12}, see also~\cite{W,BBM}, where we  
allow marked points on both boundaries as in~\cite{BT} and arbitrary orientation of $Q$. 

We will extend this description of $A(\Gamma)$-modules to include the indecomposable infinite-dimensional pure-injective modules, which were classified in \cite{PP}.

\paragraph{Finite dimensional string modules.}

By a \textbf{finite arc} in $(\mathrm{S},\mathrm{M})$ we mean an arc 
$\alpha$ in $\mathrm{S}$ whose endpoints $\alpha(0), \alpha(1)$ are contained in 
$\mathrm{M}$ and all other points in $\alpha$ are contained in the interior of $\mathrm{S}$.  

Let $\alpha \colon [0,1] \to \mathrm{S}$ be a (finite) arc in $(\mathrm{S},\mathrm{M})$ that is 
not homotopic to any arc in $\Gamma$. 
We assume that $\alpha$ is chosen in its homotopy class so that, if $\alpha$ intersects an 
arc $\gamma$ in $\Gamma$, then $\alpha$ and $\gamma$ intersect transversally and the 
number of intersections is minimal.  
 
We will define a module $M(\alpha)$ for every finite arc $\alpha$ and in order to do this we must 
first define strings. For every arrow $a \in Q_1$, we introduce a formal inverse $a^{-1}$ and we 
extend the assignments $s$ and $t$ so that $s(a^{-1}) = t(a)$ and $t(a^{-1}) = s(a)$.  If $a^{-1}$ is 
the inverse of an arrow $a$, then its formal inverse $(a^{-1})^{-1}$ is defined to be $a$.  
An arrow $a$ will be referred to as a \textbf{direct letter} and its inverse $a^{-1}$ will be referred 
to as an \textbf{inverse letter}.  For each vertex $i \in Q_0$, we introduce a \textbf{trivial letter} 
or \textbf{trivial string}, which we denote $e_i$.  We define ${e_i}^{-1} = e_i$.  We will refer to 
the elements of the set \[ Q_1\cup \{a^{-1}\mid a\in Q_1\} \cup \{e_i\mid i\in Q_0\}\] as 
\textbf{letters}.  For any sequence of letters $p_n\dots p_1p_0$ we define the inverse 
sequence $(p_n\dots p_1p_0)^{-1}$ to be $p_0^{-1}p_1^{-1}\dots p_n^{-1}$.  

Orienting $\alpha$ from $\alpha(0)$ to $\alpha(1)$, suppose $\gamma_{i_0}$ is the first arc of $\Gamma$ that intersects $\alpha$, that $\gamma_{i_1}$ is the second arc of $\Gamma$ that intersects $\alpha$, and so on.  In this way we obtain a sequence $(i_0, i_1,\dots, i_n)$ of vertices in $Q$, which uniquely defines a sequence of letters $u_\alpha$.  This sequence is \[ u_\alpha := \begin{cases} a_n a_{n-1}\dots a_1 & \text{if } n>0\\ e_{i_0} & \text{if } n=0 \end{cases}\]  where $a_j \colon i_{j-1} \to i_j$ for all $1\leq j\leq n$.  We will call such a sequence a \textbf{finite string} of length $n$.  We also extend the definition of $s$ and $t$ to strings 
so that $s(u_\alpha) = i_0$ and $t(u_\alpha) = i_n$.

\begin{example}
As an example, consider algebra $A(\Gamma)$ where $\Gamma$ is the triangulation as in 
Figure~\ref{fig:triangulation}. Let $\alpha$ be the arc in Figure~\ref{fig:ex-arc}. 
Then the string of $\alpha$ is 
$h^{-1}dgfe$.
\begin{figure}
\begin{center}
\includegraphics[width=8cm]{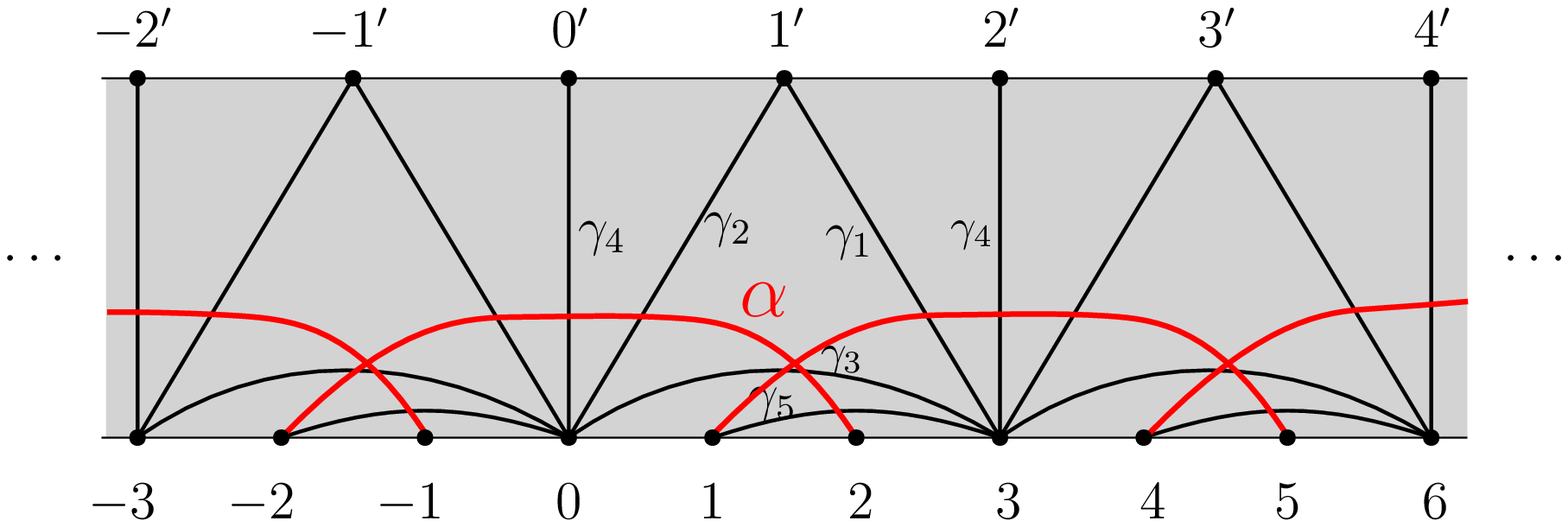} 
\hskip .3cm
\includegraphics[width=4.5cm]{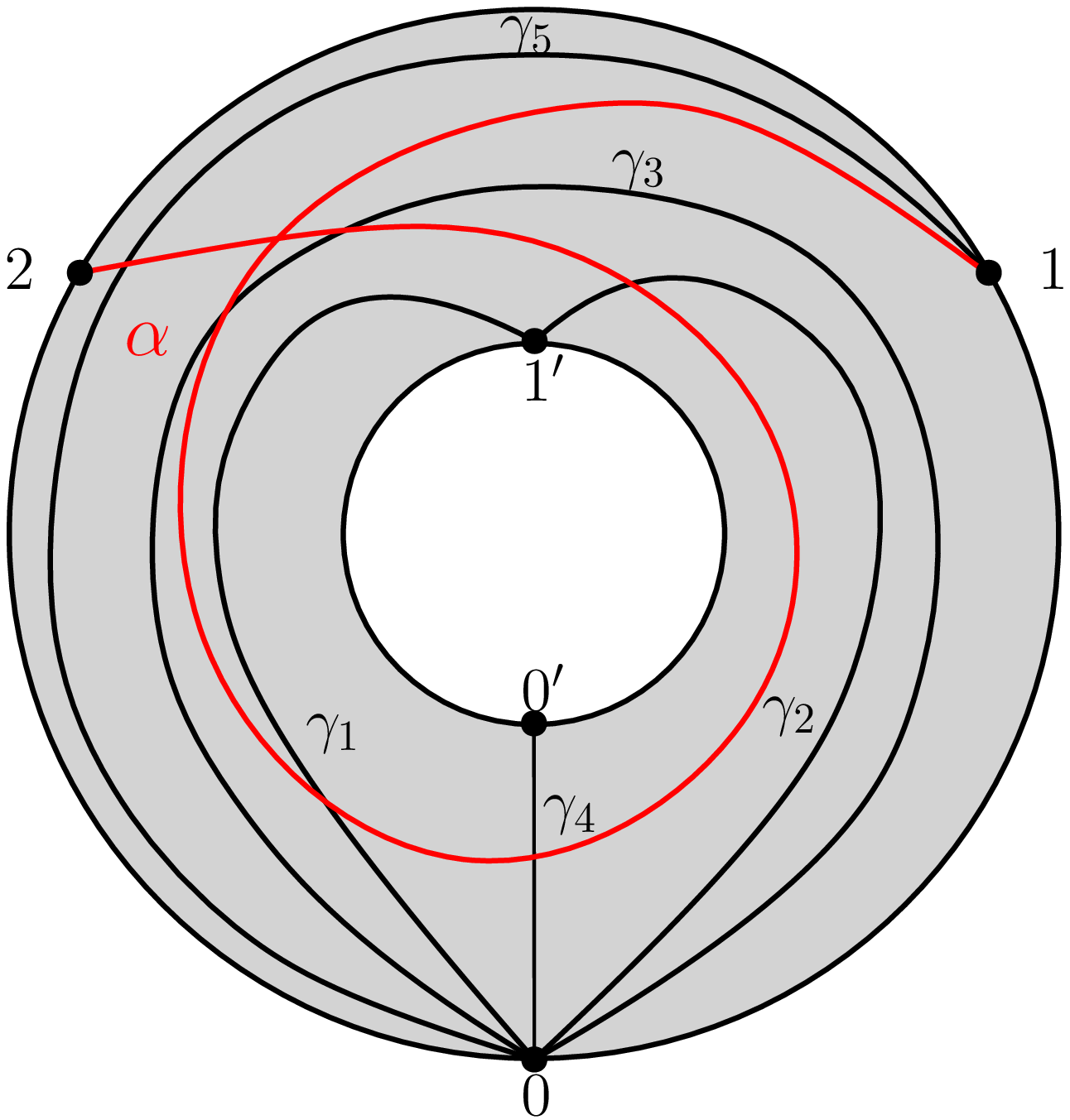} 
\end{center}
\caption{Finite arc in the triangulated cover and in the triangulated annulus}\label{fig:ex-arc}
\end{figure}
\end{example}

\begin{remark}\label{Rem: arcs are strings}
Strings were first introduced in \cite{BR} and used to parametrise the string modules over an arbitrary string algebra.  The above definition of string coincides with this original definition by \cite[Prop.~4.2]{ABCJP}.
\end{remark}

Let $\alpha$ be an arc in $(\mathrm{S},\mathrm{M})$ with associated string $u_\alpha$ as above.  We define the \textbf{string module} $M(\alpha)$ as a representation in the following way.

\noindent 
For each $i\in Q_0$, let $\mathcal{I}_i := \{k \mid  i_k = i \text{ for } 0\leq k\leq n \}$ and define 
\[ 
M(\alpha)_i := \k^{(\mathcal{I}_i)}.
\]

\noindent 
For each arrow $a \colon i \to j$ in $Q$, let 
\[ 
M(\alpha)_a \colon \k^{(\mathcal{I}_i)} \to \k^{(\mathcal{I}_j)}  
\] 
be the matrix with $(k,l)$th entry ($k\in \mathcal{I}_i$, $l \in \mathcal{I}_j$) given by 
$1_\k$ if either $l=k+1$ and $a_l = a$ or if $k=l+1$ and $a_k = a^{-1}$.  
All other entries of $M(\alpha)_a$ are defined to be zero. \\

\begin{remark}
It follows directly from the definitions that $u_\alpha = u_\delta^{-1}$ if and only if 
$M(\alpha) \cong M(\delta)$.  So for our purposes, it will not matter 
which orientation we pick for a finite arc.
\end{remark}

\paragraph{Infinite-dimensional string modules.}  In order to define infinite-dimensional string modules, we will consider `infinite' arcs with only one endpoint.  A curve $\alpha \colon [0, 1) \to \mathrm{S}$ is called an \textbf{asymptotic arc} in $(\mathrm{S},\mathrm{M})$ if the endpoint $\alpha(0)$ is in 
$\mathrm{M}$, the interior of $\alpha$ is contained in the interior of $\mathrm{S}$ and 
$\alpha$ spirals (clockwise or anticlockwise) around the inner boundary component of 
$\mathrm{S}$. 
For an example, see the asymptotic arc $\alpha$ in Figure~\ref{fig:asympt}. 

\begin{figure}
\begin{center}
\includegraphics[width=8cm]{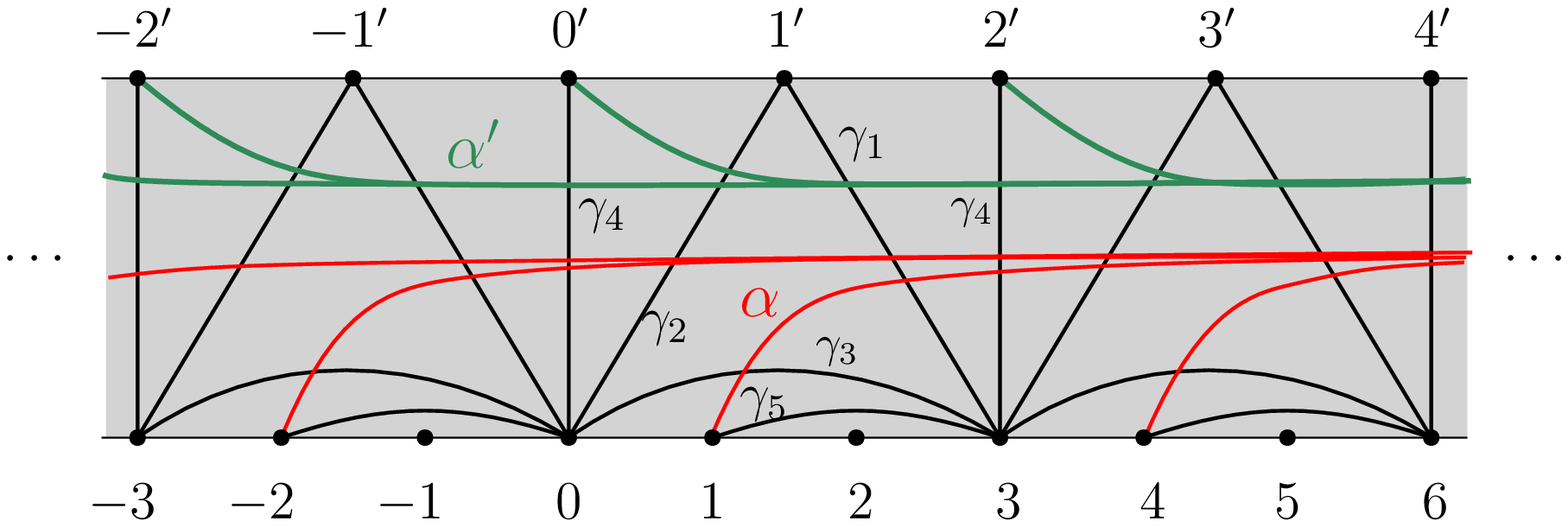} 
\hskip .3cm
\includegraphics[width=4.5cm]{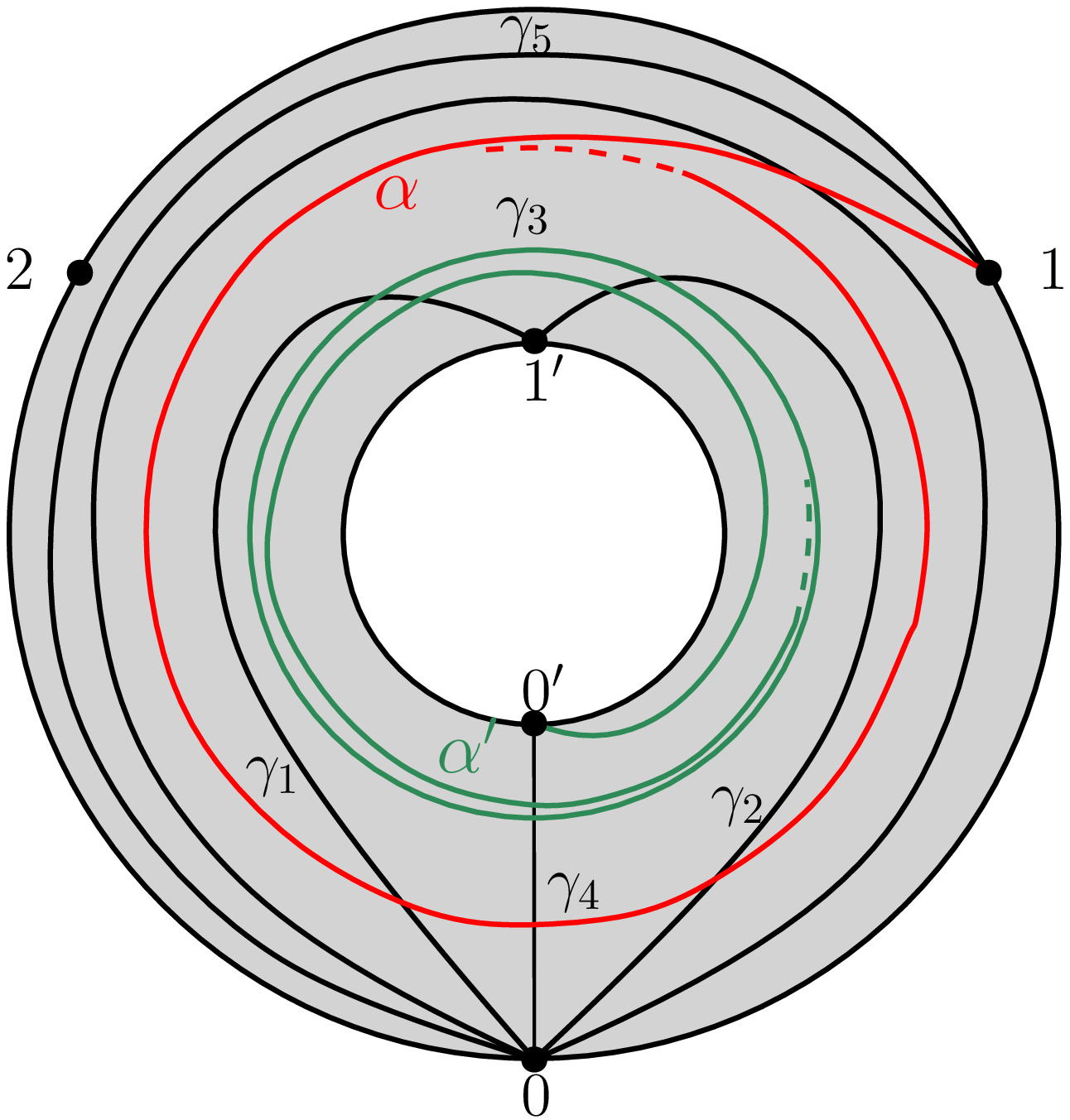} 
\end{center}
\caption{Asymptotic arcs in the triangulated cover and in the triangulated annulus}\label{fig:asympt}
\end{figure}

As with the finite arcs, we consider asymptotic arcs up to homotopy and we assume that $\alpha$ is chosen in its homotopy class so that, if $\alpha$ intersects an arc $\gamma$ in $\Gamma$, then $\alpha$ and $\gamma$ intersect transversally and the number of intersections is minimal.  

From $\alpha$ we define a sequence of vertices in $Q$ as we did in the finite case.  We obtain an infinite sequence $(i_0, i_1, i_2, \dots, i_n, \dots)$, which uniquely determines a sequence of letters $ u_\alpha = \dots a_n \dots a_2 a_1$ such that $a_j \colon i_{j-1} \to i_j$ for all $j >0$.  We call such a sequence an \textbf{$\mathbb{N}$-string}.  We also define $s(u_\alpha)$ to be $i_0$.

\begin{example} 
As an example, we consider the algebra $A(\Gamma)$ for the triangulated annulus 
from Figure~\ref{fig:triangulation}. 
Let $\alpha$ be the arc in Figure~\ref{fig:asympt}, starting at $1$ on the lower (outer) boundary. 
Its string is 
\[
\cdots \ c^{-1} gfc^{-1}gfe 
\] 
Let $\alpha'$ be the asymptotic arc starting at $0'$ on the upper (inner) boundary. 
Its string is 
\[
\cdots \ gfc^{-1} gfc^{-1}
\]
\end{example}

\begin{remark}\label{Rem: some alg compact}
Such $\mathbb{N}$-strings were first considered in the context of string algebras by Ringel 
in \cite{Ringel}.  Ringel defines an $\mathbb{N}$-word to be an infinite sequence of letters 
$\dots a_3a_2a_1$ such that every truncation $a_n\dots a_2a_1$ is a finite string.  In our 
setting $\mathbb{N}$-strings coincide with $\mathbb{N}$-words. 
Indeed, if we consider $u_\alpha = \dots a_3a_2a_1$, then, for every $n \in \mathbb{N}$, 
we can constuct an arc $\alpha_n$ such that $u_{\alpha_n} = a_n \dots a_2 a_1$ as follows. 
Let $\delta$ be the segment of $\alpha$ between $\alpha(0)$ and the point $p$ where 
$\alpha$ intersects $\gamma_{i_n}$.  We may then define $\alpha_n$ to be the concatenation of 
$\delta$ with an arc connecting $p$ with the marked point opposite $\gamma_{i_n}$. 
\end{remark}

Let $\alpha$ be an asymptotic arc in $(\mathrm{S},\mathrm{M})$ and $u_\alpha$ 
the corresponding $\mathbb{N}$-string.  We will define an infinite-dimensional indecomposable pure-injective module $M(\alpha)$.  Let $i_p$ be the first bridging arc crossed by $\alpha$, then the $\mathbb{N}$-string $\dots a_{p+3}a_{p+2}a_{p+1}$ is the \textbf{maximal periodic part of $u_\alpha$}. Let $n \in \mathbb{N}$ be minimal such that $i_{p+n} = i_p$.  If $a_{p+n}$ is direct, then $\alpha$ is said to be \textbf{expanding}.  If $a_{p+n}$ is inverse, then $\alpha$ is said to be \textbf{contracting}.

\begin{example}
We consider the algebra $A(\Gamma)$ for the triangulated annulus 
from Figure~\ref{fig:triangulation}. 
The $\mathbb{N}$-string of the arc $\alpha$ in Figure~\ref{fig:asympt} is contracting. 
The $\mathbb{N}$-string of the arc $\alpha'$ is expanding. 
\end{example}

As before we may define a \textbf{string module} $M(\alpha)$.  For each 
$i\in Q_0$, let $\mathcal{I}_i := \{k \mid  i_k = i \text{ for }  k \geq 0 \}$.  From this data, we  define two modules.  
Firstly let  
\[ 
M^{\Pi}(\alpha)_i := \k^{\mathcal{I}_i}\]  and secondly let \[ M^\oplus(\alpha)_i := \k^{(\mathcal{I}_i)}.
\]

The action of $\Lambda$ is defined in the same way in both cases. For each arrow $a \colon i \to j$ in $Q$, let 
\[ 
M^*(\alpha)_a \colon M^*(\alpha)_i \to M^*(\alpha)_j 
\] 
(with $* = \prod, \bigoplus$) be the morphism determined by the following components. For $k\in \mathcal{I}_i$ and 
$l \in \mathcal{I}_j$, take the component map to be $1_\k$ if either $l=k+1$ and $a_l = a$ or if $k=l+1$ and 
$a_k = a^{-1}$.  Otherwise take the $(k,l)$th component of $M^*(\alpha)_a$ to be zero. 

If $\alpha$ is expanding, then define $M(\alpha) := M^{\Pi}(\alpha)$ and if $\alpha$ is contracting, then  define $M(\alpha) := M^\oplus(\alpha)$.

\paragraph{Band modules.}

Let $\beta$ be the unique simple closed arc lying in the interior of $(\mathrm{S},\mathrm{M})$ 
(considered up to homotopy). 
As before, we choose a representative in the homotopy class such that 
$\beta$ crosses all arcs $\gamma \in \Gamma$ transversally and the number of crossings 
between each $\gamma$ and $\beta$ is minimal.   
We orient $\beta$ in the same way as the boundary: 
in the universal cover, $\beta$ is oriented from left to right, in the annulus 
anticlockwise.

Choose an arc $\gamma_{i_0}$ in $\Gamma$ that is crossed by $\beta$ such that, if 
$\gamma_{i_1}$ is the next arc in $\Gamma$ crossed by $\beta$ in a clockwise direction 
and $\gamma_{i_{-1}}$ is the next arc crossed by $\beta$ in a anticlockwise direction, 
then the letter $a_0 \colon i_{-1} \to i_0$ is direct and the letter $a_1 \colon i_0 \to i_1$ is 
inverse.\footnote{This can always be done, as the bridging arcs in the triangulation 
change orientation at least twice, i.e.\ they form both a figure N and a \reflectbox{N}  
where the 
first and last of the three arcs may coincide.}  
Following $\beta$ in a clockwise direction, let $\gamma_{i_{2}}$ be the next arc 
in $\Gamma$ crossed by $\beta$ after $\gamma_{i_1}$, let $\gamma_{i_{3}}$ be the next 
arc in $\Gamma$ crossed by $\beta$, and so on.  After carrying out a similar procedure in 
the anticlockwise direction, the arc $\beta$ uniquely determines a sequence 
$(\dots i_{-1}, i_0, i_1, \dots)$ of vertices in $Q$.  This sequence then uniquely determines a 
sequence of letters $u_\beta =\dots a_1a_0a_{-1}\dots$ such that $a_j \colon i_{j-1} \to i_j$ 
for all $j \in \mathbb{Z}$.  We will refer to $u_\beta$ as the \textbf{periodic $\mathbb{Z}$-string} 
and, for formal reasons, we will say that the \textbf{maximal periodic part of $u_\beta$} is $u_\beta$.  
Let $s>0$ be minimal such that $i_s=i_0$. Then we call the 
sequence $b := a_s\dots a_1$ of letters the 
\textbf{band} corresponding to $\beta$.\begin{remark}\label{Rem: idem-ideal} 
Consider the band $b := a_s\cdots a_1$.  Let $J$ be the ideal in in $A(\Gamma)$ generated 
by the idempotent elements corresponding to the set $V$ consisting of the peripheral arcs 
in $\Gamma$ and let $B : = A(\Gamma) / J$. The algebra $B$ is the surface algebra given 
by the surface obtained by cutting along all of the arcs $\gamma_j\in V$.  So $B$ is given 
by a triangulation of the annulus consisting of only bridging arcs, i.e.~$B\cong \k \tilde{A}_{s-1}$. 
\end{remark}

\begin{example}
Consider the algebra $A(\Gamma)$ for the triangulated annulus 
from Figure~\ref{fig:triangulation}. 
The sequence for $\beta$ as in Figure~\ref{fig:band} is $(\dots, 4,2,1,4,2,1,4,\dots)$ with 
periodic $\mathbb{Z}$-string $\cdots fc^{-1} gf c^{-1}g \cdots$. We have $i_0=1$, $s=3$ and 
$b=c^{-1}gf$. 

\begin{figure}
\begin{center}
\includegraphics[width=8cm]{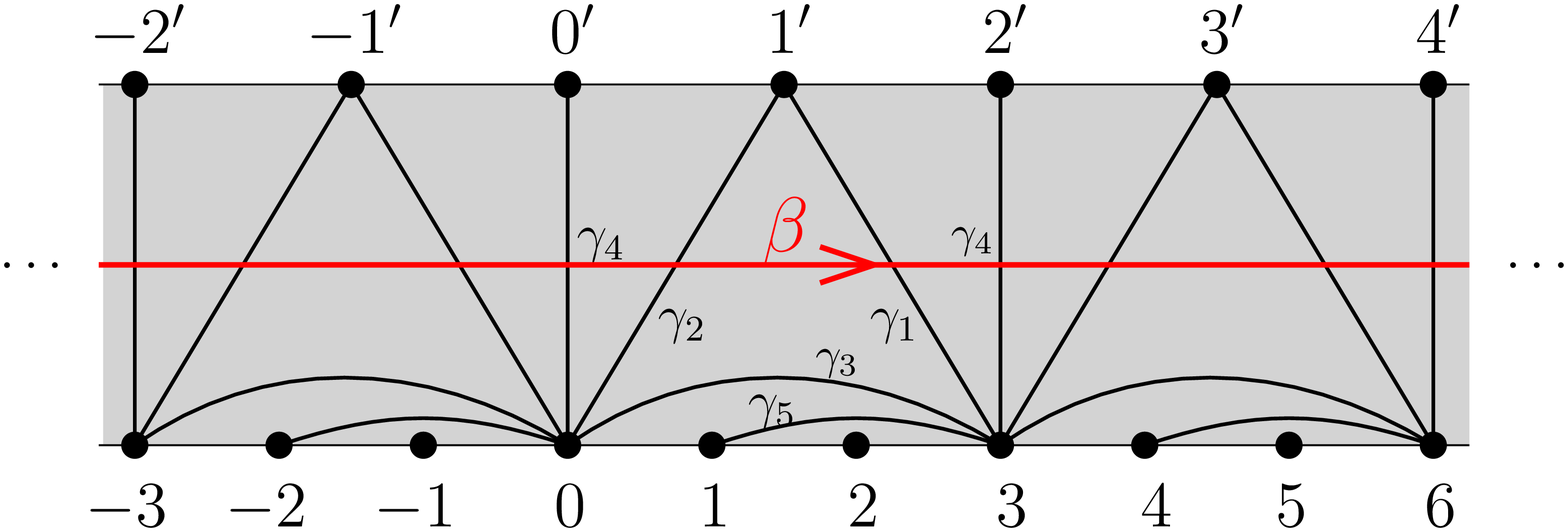} 
\hskip .3cm
\includegraphics[width=4.5cm]{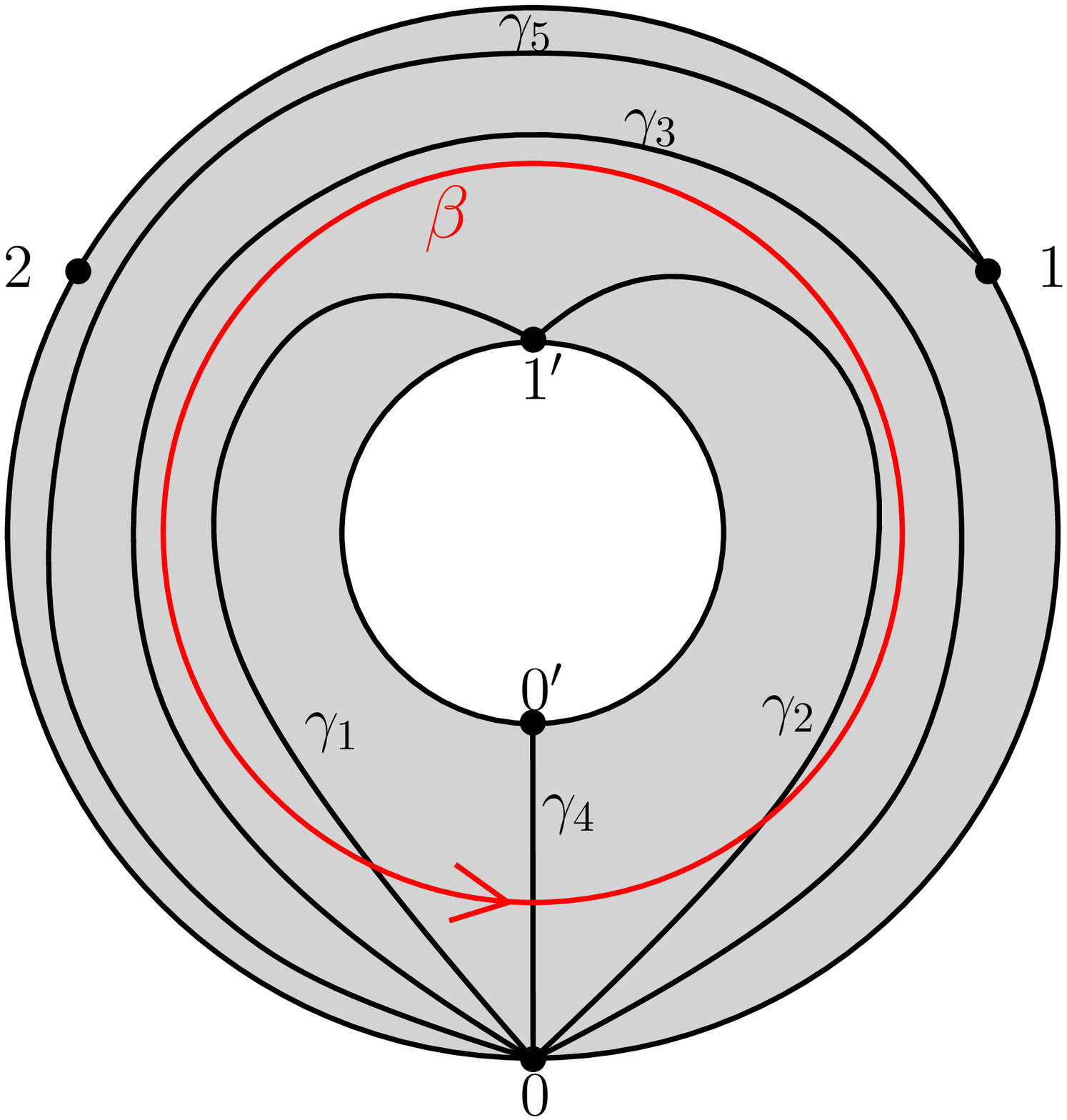} 
\end{center}
\caption{Arc $\beta$ in the cover and in the annulus}\label{fig:band}
\end{figure}
\end{example}

As before, we define a module $M^\oplus(\beta)$ associated to $u_\beta$.  
For each $i\in Q_0$, let $\mathcal{I}_i := \{k \mid  i_k = i \text{ for }  k \in \mathbb{Z} \}$ 
and define $M^\oplus(\beta)_i := \k^{(\mathcal{I}_i)}$.  For each arrow $a \colon i \to j$ 
in $Q$, let $ M^\oplus(\beta)_a \colon M^\oplus(\beta)_i \to M^\oplus(\beta)_j  $ be the 
morphism defined exactly as in the definition of the infinite-dimensional string modules.

It is clear from the construction of $M^\oplus(\beta)$ that the $\k$-basis vectors of 
$M^\oplus(\beta)$ are canonically parametrised with the integers: $\{\dots v_{-1}, v_0, v_1, \dots\}$. 
Let $\Phi \colon M^\oplus(\beta) \to M^\oplus(\beta)$ be such that $v_i \mapsto v_{i+s}$ for all 
$i\in \mathbb{Z}$. It can easily be checked that $\Phi \in \End_A(M^\oplus(\beta))$ and that 
$\Phi^{-1}$ is the endomorphism such that $v_i \mapsto   v_{i-s}$ for all $i\in \mathbb{Z}$. 
This endomorphism equips $M^\oplus(\beta)$ with a $(A(\Gamma), R)$-bimodule structure 
where $R := \k[\Phi, \Phi^{-1}]$.  We
consider the functor 
\[
M^\oplus(\beta) \otimes_{R}(-) \colon \Mod{R} \to \Mod{A(\Gamma)}.
\] 

An $A(\Gamma)$-module of the form $N \cong M^\oplus(\beta) \otimes_{R}L$ is an 
indecomposable pure-injective module if $L$ is an indecomposable 
pure-injective $R$-module (see, for example, \cite{Ringel}).

\begin{remark}\label{Rem: K[T,T^{1}]}
The following is a complete list of the indecomposable pure-injective modules over $R$.

\begin{enumerate} \item Finitely-generated module $N(\lambda, n)$ i.e. 
$\k[\Phi]/{\mathbf{p}_\lambda}^n$ where $\mathbf{p}_\lambda = (\Phi - \lambda)$, 
for each $n\geq1$ and $\lambda \in \k^*$.

\item Pr\"ufer module $N(\lambda, \infty)$ for each $\lambda \in \k^*$ i.e. the direct 
limit $\varinjlim_{n\geq1} N(\lambda, n)$.

\item $\mathbf{p}_\lambda$-adic completion $N(\lambda, -\infty)$ of $\k[\Phi]$ for each 
$\lambda \in \k^*$ i.e. the inverse limit $\varprojlim_{n\geq 1} N(\lambda, n)$.

\item Field of fractions $\k(\Phi)$.
\end{enumerate}

The above list is obtained from the classification of indecomposable pure-injective modules 
over a Dedekind domain in~\cite{Ziegler} by observing that the category of $R$-modules can 
be realised as a definable subcategory of the category of $\k[\Phi]$-modules.  
We also use that, since $\k$ is algebraically closed, all the maximal ideals of $\k[\Phi]$ are of 
the form $\mathbf{p}_\lambda$ for $\lambda\in\k$.
\end{remark}

From the list of Remark~\ref{Rem: K[T,T^{1}]}, we obtain 
the following indecomposable pure-injective $A(\Gamma)$-modules. We will refer to them 
as {\bf band modules}.

\begin{enumerate}
\item Finite-dimensional band module $M(\lambda, n) := M^\oplus(\beta) \otimes_{R}N(\lambda, n)$ 
for each $\lambda \in \k^*$ and $n\in \mathbb{N}$.
\item Pr\"ufer module $M(\lambda, \infty) := M^\oplus(\beta) \otimes_{R}N(\lambda, \infty)$ 
for each $\lambda \in \k^*$.
\item Adic module $M(\lambda, -\infty) := M^\oplus(\beta) \otimes_{R}N(\lambda, -\infty)$ 
for each $\lambda \in \k^*$.
\item Generic module $G := M^\oplus(\beta) \otimes_{R}\k(\Phi)$.
\end{enumerate}

\begin{remark}
The definition of the band modules depends on the choice of arc $\gamma_{i_0}$ in the 
construction.  However, the set of modules defined by different choices of $\gamma_{i_0}$ 
are the same up to isomorphism.  See \cite{Ringel} for details.
\end{remark}

\paragraph{Classification of indecomposable pure-injective $A(\Gamma)$-modules.}
The algebras $A(\Gamma)$ are domestic string algebras, therefore we may apply the following classification result.

\begin{theorem}[\cite{PP}]\label{Thm: ind pinj}
Let $A(\Gamma)$ be the algebra associated to a triangulated $\Gamma$ of the annulus $(\mathrm{S},\mathrm{M})$.  Then the following is a complete list of indecomposable pure-injective $A(\Gamma)$-modules. \begin{enumerate}
\item The string module $M(\alpha)$ for each $\alpha$ (up to homotopy) such that $\alpha$ is a finite or asymptotic arc in $(\mathrm{S},\mathrm{M})$ that is not homotopic to any arc in $\Gamma$.
\item The band module $M(\lambda, n)$ for every $\lambda \in \k^*$ and every $n\in \mathbb{N}\cup\{\infty, -\infty\}$.
\item The generic module $G$.
\end{enumerate}
\end{theorem}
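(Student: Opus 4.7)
The plan is to reduce the theorem to the known combinatorial classification of indecomposable pure\nobreakdash-injective modules over a domestic string algebra given in \cite{PP}, and then to translate that combinatorial data into the geometric language of arcs on $(\mathrm{S},\mathrm{M})$ using the string/arc dictionary developed earlier in the section. The key observation to invoke is that $A(\Gamma)$ is a string algebra (so \cite{BR} gives the finite-dimensional classification into string and band modules) and, moreover, it is \textbf{domestic}: up to cyclic rotation and inversion there is a unique band, namely the one associated to the non\nobreakdash-contractible closed curve $\beta$ of Figure~\ref{fig:band}. Existence of at least one band is ensured by the bridging arcs; uniqueness follows from the fact that any periodic $\mathbb{Z}$-string must trace out a closed curve in $(\mathrm{S},\mathrm{M})$ that is not null-homotopic, and on an annulus such a curve is unique up to homotopy.

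First I would record the \cite{PP} classification in its original form: over a domestic string algebra $\Lambda$ the indecomposable pure-injectives are exactly \textbf{(i)} the finite string modules $M(u)$ for finite strings $u$; \textbf{(ii)} the \emph{one-sided} infinite string modules $M(u)$ where $u$ is an $\mathbb{N}$-string that is eventually periodic (and the module is taken as $M^\oplus$ or $M^\Pi$ according to the direction of the periodic tail); \textbf{(iii)} for each band $b$ (up to cyclic rotation and inversion) and each $\lambda \in \k^*$, the four types of band modules $M(b,\lambda,n)$, $M(b,\lambda,\infty)$, $M(b,\lambda,-\infty)$ with $n \in \mathbb{N}$; and \textbf{(iv)} one generic module per band. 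No \emph{two-sided} infinite string modules appear precisely because $\Lambda$ is domestic; any $\mathbb{Z}$-string reduces to a periodic one, and every $\mathbb{N}$-string is eventually periodic.

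Next I would match each item of the \cite{PP} list with the geometric list. Part \textbf{(i)} corresponds to finite arcs not homotopic to any $\gamma\in\Gamma$ by Remark~\ref{Rem: arcs are strings}. For part \textbf{(ii)}, the eventually periodic $\mathbb{N}$-strings are exactly the strings $u_\alpha$ of asymptotic arcs $\alpha$: the periodic tail arises because $\alpha$ spirals around the inner boundary and therefore repeatedly crosses the same cyclically ordered sequence of arcs of $\Gamma$, and the expanding/contracting dichotomy matches the $M^\Pi$/$M^\oplus$ dichotomy by construction. Conversely, any eventually periodic $\mathbb{N}$-string can be realised by an asymptotic arc by taking finite truncations as in Remark~\ref{Rem: some alg compact} and passing to the limiting spiral. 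Part \textbf{(iii)} is precisely the list of band modules we attached to the unique non\nobreakdash-contractible closed curve $\beta$, using the description of pure-injective $R = \k[\Phi,\Phi^{-1}]$-modules in Remark~\ref{Rem: K[T,T^{1}]}; since $\k$ is algebraically closed, every maximal ideal of $\k[\Phi]$ has the form $(\Phi-\lambda)$ for some $\lambda\in\k^*$, yielding the parametrisation by $\k^*\times(\mathbb{N}\cup\{\infty,-\infty\})$. Part \textbf{(iv)} yields the single generic module $G$ since there is only one band.

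The main obstacle is verifying the asymptotic arc / eventually periodic $\mathbb{N}$-string correspondence, as it requires a careful homotopy argument on the annulus: given an eventually periodic $\mathbb{N}$-string $u$, one must exhibit an \emph{actual} asymptotic arc $\alpha$ (with transverse and minimal intersections with $\Gamma$) whose associated string equals $u$, and show uniqueness up to homotopy. This is done by lifting to the universal cover $U_{p,q}$: the periodic tail of $u$ determines a fundamental domain of translation on $U_{p,q}$ along which the lifted curve must travel, so the lift of $\alpha$ is an infinite zig-zag through the triangulated strip, converging towards one end of $U_{p,q}$, and projecting back to $(\mathrm{S},\mathrm{M})$ gives a spiral. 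The finite initial segment of $u$ determines the initial portion of the arc by the finite case, and gluing the two yields $\alpha$; homotopy uniqueness follows from the minimality of transverse intersections. The remaining bookkeeping (that different homotopy classes of arcs give non-isomorphic modules, and that no two items in the geometric list coincide) is routine from the bijection between strings/bands and arcs/closed curves established in the preceding subsections and in \cite{ABCJP, BBM, BD}.
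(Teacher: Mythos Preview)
Your proposal is correct and follows the same approach as the paper: observe that $A(\Gamma)$ is a domestic string algebra, invoke the classification of \cite{PP}, and translate via the string/arc dictionary built up in the preceding subsections. The paper in fact gives no proof at all---it simply states the theorem as a citation of \cite{PP} after having set up the dictionary---so your write-up is a more explicit version of what the paper leaves implicit, spelling out the match between each item of the \cite{PP} list and the geometric list and checking the domesticity (uniqueness of the band).
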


The term \textbf{arc in $(\mathrm{S},\mathrm{M})$} is used to refer to finite arcs, asymptotic arcs or $\beta$.  For a string module $M(\alpha)$, we say that $\alpha$ is the \textbf{associated arc} and $u_\alpha$ is the \textbf{associated string}.  For a band module $M(n, \lambda)$ ($n \in \mathbb{N}\cup\{\infty, -\infty\}, \lambda \in \k^*$) or $G$, we say that $\beta$ is the \textbf{associated arc} and $u_\beta$ is the \textbf{associated string}.

We will take the convention that, for any $\gamma \in \Gamma$, the module $M(\gamma)$ is defined to be the zero module.

%

 \subsubsection{Injective dimension of indecomposable pure-injective modules.}\label{Sec: inj dim}
In this subsection we consider the injective dimension of the indecomposable pure-injective modules introduced in the previous subsection.  

\paragraph{Injective dimension of string modules}
According to Corollary \ref{cor: max rig cosilt}, we must show that certain modules have injective dimension less than or equal to $1$ over a quotient of $A(\Gamma)$.  It turns out that this quotient is still a gentle algebra (see Corollary \ref{cor: ann is gentle}).  In this section we therefore consider the injective dimension of string modules over an arbitrary gentle algebra $A \cong \k Q/I$.  

In \cite{Ringel} Ringel defines finite strings, infinite strings and bands over any string algebra (and hence any gentle algebra).  It is also shown in \cite{Ringel} that in this context infinite- and finite-dimensional string and band modules are always indecomposable pure-injective modules.    

Several authors have computed syzergies and hence projective resolutions of finite-dimensional string and band modules (see, for example, \cite{CPS, WW, BaurSchroll}).  Syzergies of infinite-dimensional string modules have also been considered in \cite{HZS}.  Since we are considering injective dimension rather than projective dimension, we include a full proof of the next lemma.  However everything is completely dual to the papers cited here.

Let $w$ be a finite string. We can write $w = p_0^{-1}q_0p_1^{-1}q_1\dots p_n^{-1}q_n$ where $p_i, q_i$ are paths in $(Q,I)$ for all $0 \leq i \leq n$ and the length of $p_i$ (resp.~$q_j$) is greater than or equal to $1$ when $i >0$ (resp.~$j<n$).  Similarly we can write any $\mathbb{N}$-string as $w = \dots q_1^{-1}p_1q_0^{-1}p_0$.  
Note that this partition into direct and inverse paths is uniquely determined.

\begin{lemma}\label{Lem: id-gentle}
Let $A = \k Q/I$ be a gentle algebra.  Let $w = q_n^{-1}p_n \dots q_1^{-1}p_1q_0^{-1}p_0$ (respectively 
$w = \dots q_1^{-1}p_1q_0^{-1}p_0$) be a finite string (respectively an $\mathbb{N}$-string).

Then $\id{A}{M(w)} \leq 1$ if and only if the following conditions are satisfied. 
\begin{enumerate}
\item 
If there is an arrow $l_0$ such that $p_0l_0$ is a path in $(Q,I)$, then there is no arrow $a$ such that $l_0a \in I$. 
\item 
If $w$ is a finite string and there is an arrow $l_n$ such that $q_nl_n$ is a path in $(Q,I)$, then there is no arrow 
$a$ such that $l_na \in I$.
\end{enumerate}
\end{lemma}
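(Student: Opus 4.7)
The plan is to establish the equivalence by computing the first two terms of the minimal injective coresolution of $M(w)$ explicitly, reducing the claim $\id{A}{M(w)} \leq 1$ to the requirement that the cokernel $C$ of the injective envelope $M(w) \hookrightarrow I_0$ is itself a direct sum of indecomposable injectives.  Everything here is formally dual to the hook-based projective-resolution arguments for string modules in the cited references \cite{CPS, WW, BaurSchroll, HZS}, so the main work lies in identifying how the conditions (1) and (2) arise from the end-behaviour of the cokernel.

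First I would identify the socle of $M(w)$ combinatorially: a basis vector $v_k$ of $M(w)$ lies in the socle exactly when the letter $x_k$ at position $k$ is direct or absent and the letter $x_{k+1}$ is inverse or absent, so the socle of $M(w)$ is supported at the ``valleys'' of $w$ -- namely the positions at the right end of each direct subpath $p_i$.  This yields simple socle summands at the vertices $j_i := t(p_i)$, and hence $I_0 = \bigoplus_i I(j_i)$, where each $I(j_i)$ is the string module whose string is built around a valley at $j_i$ by adjoining a ``cohook'' tail on each side, as determined combinatorially by the gentle algebra structure.

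Next I would analyse the cokernel $C$.  At each interior valley (with $0 < i < n$), the two letters of $w$ incident to the valley correspond to two distinct arrows of $Q$ ending at $j_i$; since $A$ is gentle, these are in fact all the arrows of $Q$ ending at $j_i$, so the local portion of $w$ around the valley matches the central portion of the string of $I(j_i)$.  The remaining cohook tails of $I(j_i)$ beyond this central portion contribute to $C$, and across each interior peak of $w$ the cohook tails of the two adjacent injectives $I(j_i)$ and $I(j_{i+1})$ glue together to form the string of an indecomposable injective.  This gives one injective summand of $C$ per interior peak, with no further conditions required.

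The end behaviour is the heart of the matter.  On the $p_0$-side, the cohook tail of $I(j_0)$ begins with a direct letter $l_0$ precisely when $p_0 l_0$ is a path in $(Q,I)$; and the further combinatorics of the cohook construction then force the right-end piece of $C$ to be the string of an indecomposable injective if and only if $l_0$ is not the start of any relation, i.e.\ there is no arrow $a$ with $l_0 a \in I$.  When such an $a$ exists, the corresponding end-piece of $C$ is a ``stunted'' string that is not the string of any $I(j)$, so $C$ fails to be injective and $\id{A}{M(w)} > 1$; this is precisely condition (1).  A mirror argument at the $q_n^{-1}$-end of $w$ yields condition (2), which is only required for finite strings since $\mathbb{N}$-strings have no left end.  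The main obstacle is the combinatorial bookkeeping at the interior peaks -- verifying that the cohook tails of adjacent injectives really do glue into indecomposable injective summands of $C$ -- for once this is established, the end analysis reduces to a direct inspection of the cohook shape against the conditions (1) and (2), yielding both directions of the equivalence.
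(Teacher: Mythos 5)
Your argument is, in outline, exactly the paper's own proof: compute the injective envelope from the socle at the valleys $j_i=t(p_i)$, decompose the cokernel into pieces glued across the interior peaks plus two (one, for an $\mathbb{N}$-string) end pieces coming from the cohook tails, observe that the interior pieces are injective, and read off conditions (1) and (2) from injectivity of the end pieces. For finite strings and for contracting $\mathbb{N}$-strings this is correct as you describe it, and your identification of the end condition (the end piece of $C$ is injective iff there is no arrow $a$ with $l_0a\in I$) matches the paper's cosyzygy summand $M(a_0')$, where $a_0=l_0a_0'$ is the cohook tail on the $p_0$-side.

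The step that fails as written is the blanket claim $I_0=\bigoplus_i I(j_i)$. For an $\mathbb{N}$-string the lemma concerns the pure-injective string module, which in the expanding case is by definition the product-type module $M^{\Pi}(w)$; this module does not even embed in the direct sum $\bigoplus_{i\geq 0} I(j_i)$, and its injective envelope is the direct product $\prod_{i\geq 0} I(j_i)$. This is precisely the case distinction the paper makes (product for expanding, direct sum for contracting); with it, the rest of your analysis goes through verbatim, since the cosyzygy is again an injective module (a sum, respectively product, of the glued interior pieces) together with the single serial end piece $M(a_0')$, so only condition (1) is relevant for $\mathbb{N}$-strings. Two smaller points you should make explicit: why the glued interior pieces are injective -- at an interior peak $u$ two distinct arrows of $Q$ start at $u$, and gentleness then forces every arrow of $Q$ ending at $u$ to be the last arrow of one of the two cohook tails, so the glued string is exactly the string of $I(u)$ (the paper asserts this without proof as well) -- and, in the infinite case, that an infinite direct sum of injectives is injective (true since $A$ is finite dimensional, hence noetherian), while direct products of injectives are always injective.
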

\begin{proof}
We begin by proving the proposition for a finite string ${w}$.  The injective envelope of $M({w})$ is 
$\bigoplus_{i=0}^n M(b_i^{-1}q_i^{-1}p_ia_i)$ where ${a}_i$ is the longest path in $Q$ such that 
${p}_i{a}_i$ is not contained in $I$ and ${b}_i$ is the longest path in $Q$ such that ${q}_i{b}_i$ is not contained in $I$. 
The first cosyzygy of $M(w)$ is isomorphic to $\bigoplus_{i=1}^n M({b}_{i-1}^{-1}a_i) \oplus M({a}_0') \oplus M({b}_n')$ where 
${a}_0 = l_0{a}_0'$ and ${b}_n = l_n{b}_n'$ for arrows $l_0, l_n$ if they exist and zero otherwise. 

The module $M({a}_i^{-1}{b}_{i-1})$ is injective for all $1\leq i\leq n$.  So $\id{A}{M} \leq 1$ if and only if 
$M({a}_0')$ and $M({b}_n')$ are injective if and only if conditions $(1)$ and $(2)$ are satisfied.

Now consider an $\mathbb{N}$-string ${w}$.  If ${w}$ is expanding then the injective envelope of 
$M(w)$ is given by $\prod_{i\geq 0} M(b_i^{-1}q_i^{-1}p_ia_i)$ and if ${w}$ is contracting then the injective envelope of $M({w})$ is given by $\bigoplus_{i\geq 0} M(b_i^{-1}q_i^{-1}p_ia_i)$.  In both cases $a_i$ and $b_i$ are defined as in the finite case.  

Then, as in the finite case, the first cosyzygy is the direct sum of an injective module and a serial module 
$M({a}_0')$ where ${a}_0'$ is either $0$ or ${a}_0 = l_0{a}_0'$ for an arrow $l_0$. Finally we observe that 
$M({a}_0')$ is injective if and only if condition $(1)$ is satisfied as required.
\end{proof}

\paragraph{Injective dimension of band modules}
It suffices for us to consider the injective dimension of band modules over $A(\Gamma)$.  Consider the quotient algebra $B := A(\Gamma) / J$ from Remark~\ref{Rem: idem-ideal} and let $f \colon A(\Gamma) \rightarrow B$ be the corresponding 
ring epimorphism.  

\begin{remark}\label{rem:ideal-image} The image of the restriction functor $f^* \colon \Mod{B} \rightarrow \Mod{A}$ contains all the band modules and is closed under extensions because $J$ is an idempotent ideal (see, for example, \cite[Cor.~2.2]{Jans}).  Moreover, it is a bireflective and therefore definable subcategory. \end{remark}

\begin{lemma}\label{Lem: band-Z-id}
If $N$ is a band module over $A(\Gamma)$, then $\id{A(\Gamma)}{N} \leq 1$ and $\pd{A(\Gamma)}{N} \leq 1$.
\end{lemma}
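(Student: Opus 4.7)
The approach is to reduce the bounds on injective and projective dimension to an explicit resolution computation for the basic finite-dimensional band module $M(\lambda,1)$, and then extend to all band modules via extension and definability arguments.

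By Remark \ref{Rem: idem-ideal}, the quotient $B = A(\Gamma)/J$ is the hereditary path algebra $\k\tilde A_{s-1}$, and every band module $N$ is annihilated by $J$, since the periodic $\mathbb{Z}$-string $u_\beta$ visits only bridging vertices. Hence $N$ lies in $\Mod{B}$ viewed inside $\Mod{A(\Gamma)}$ via Remark \ref{rem:ideal-image}. For finite $n$, the band module $M(\lambda,n)$ admits a filtration $0 \subset M(\lambda,1) \subset \cdots \subset M(\lambda,n)$ whose $n$ consecutive factors are all isomorphic to $M(\lambda,1)$. Since both classes $\{X : \pd{A(\Gamma)}{X} \le 1\}$ and $\{X : \id{A(\Gamma)}{X} \le 1\}$ are closed under extensions (via the long exact sequences for $\Ext^2$), the finite-dimensional case reduces to $M(\lambda,1)$.

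For the basic band module $M(\lambda,1)$, I would compute its first cosyzygy and first syzygy directly from the $\mathbb{Z}$-string $u_\beta$, in analogy with the string-module analysis appearing in the proof of Lemma \ref{Lem: id-gentle}. The key simplification is that $u_\beta$ is bi-infinite and periodic, so no endpoint terms of type (1) or (2) of Lemma \ref{Lem: id-gentle} arise. Moreover, the hereditariness of $B$ forces every internal triangle of $\Gamma$ to contain at least one peripheral edge: otherwise a quadratic relation from that triangle would persist in $B$. This ensures that, in the analogue of the injective envelope of $M(\lambda,1)$, every maximal direct (respectively inverse) path extension terminates so that each resulting serial summand of the cosyzygy (respectively syzygy) is already injective (respectively projective), yielding $\id{A(\Gamma)}{M(\lambda,1)}, \pd{A(\Gamma)}{M(\lambda,1)} \le 1$.

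The infinite-dimensional band modules are handled via closure. By (the proof of) Lemma \ref{Lem: id definable}, the class $\{X : \id{A(\Gamma)}{X} \le 1\}$ is a definable subcategory and is therefore closed under direct limits, direct products, direct summands, and pure-injective envelopes; each of the Prüfer module $M(\lambda,\infty) = \varinjlim_n M(\lambda,n)$, the adic module $M(\lambda,-\infty)$, and the generic module $G$ is built from the $M(\lambda,n)$'s using these operations, yielding the injective-dimension bound uniformly. For the projective-dimension bound, since $A(\Gamma)$ is finite-dimensional and hence left perfect, direct products of $A(\Gamma)$-projective modules remain projective; consequently $\{X : \pd{A(\Gamma)}{X} \le 1\}$ is closed under direct products and summands, covering the adic and generic cases. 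The Prüfer case requires additional care: here I would use that the transition maps $M(\lambda,n) \hookrightarrow M(\lambda,n+1)$ are pure monomorphisms, so the colimit is formed from a pure-exact directed system of finitely presented modules of $\pd \le 1$ over the artinian algebra $A(\Gamma)$. The main obstacle is the explicit resolution computation for $M(\lambda,1)$, which requires carefully tracking how the periodic $\mathbb{Z}$-string $u_\beta$ interacts with the quadratic relations of $A(\Gamma)$ arising from each internal triangle.
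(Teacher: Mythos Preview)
Your overall architecture matches the paper's: establish the dimension bounds for finite-dimensional band modules, then extend to the infinite-dimensional ones via closure properties of the classes $\{X : \id{A(\Gamma)}{X}\le 1\}$ and $\{X : \pd{A(\Gamma)}{X}\le 1\}$. The treatment of the finite-dimensional case differs in style rather than substance: the paper simply cites \cite{CPS} for $\pd{}{}\le 1$ and invokes the $1$-Iwanaga--Gorenstein property of $A(\Gamma)$ to obtain $\id{}{}\le 1$, whereas you propose a direct cosyzygy/syzygy computation for $M(\lambda,1)$ together with an extension argument for $M(\lambda,n)$. Your geometric observation that every internal triangle must contain a peripheral edge (since three bridging arcs cannot form a triangle in a two-boundary surface) is correct and is what underlies the vanishing of relations in $B$; this route works but is more laborious than the citation.

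The genuine gap is in the projective-dimension bound for the infinite-dimensional band modules. For the Pr\"ufer module you flag ``additional care'' but do not supply it: the observation that the transition maps are pure monomorphisms does not by itself give $\pd{}{}\le 1$ for the colimit. What is needed is that over a perfect ring flat modules are projective, so flat dimension equals projective dimension, and flat dimension $\le 1$ is preserved under direct limits because $\Tor$ commutes with them. More seriously, your claim that closure under products and summands ``covers the adic and generic cases'' is not justified: $M(\lambda,-\infty)$ is an \emph{inverse limit} of the $M(\lambda,n)$, not a direct summand of their product in any obvious way, and $G$ is not manifestly so either. The paper sidesteps both issues by citing \cite{KraFin} for the fact that $\{X : \pd{A(\Gamma)}{X}\le 1\}$ is itself a definable subcategory, and then invoking \cite{RinZg} to conclude that all infinite-dimensional band modules lie in the definable closure of the finite-dimensional ones inside $\mathcal{D}$. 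Without these two inputs (or equivalent replacements), your argument for $\pd{}{}\le 1$ does not close.
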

\begin{proof}
By \cite[Cor.~2.13]{CPS} finite-dimensional band modules have projective dimension equal to $1$.  It follows from \cite{GR} that the algebra is $1$-Iwanga-Gorenstein, and so the finite-dimensional band modules also have injective dimension equal to $1$ (see \cite[Thm.~5]{Iwanaga}).

Recall that, by Baer's criterion, the full subcategory with objects in the class $\mathcal{F}$ of 
modules of injective dimension less than or equal to $1$ is definable.  Similarly, the full 
subcategory with objects in the class $\mathcal{P}$ of modules of projective dimension 
less than or equal to $1$ is definable (see, for example, \cite[Lem.~5]{KraFin}).  Let $\mathcal{D}$ be the image of the restriction functor $f^* \colon \Mod{B} \rightarrow \Mod{A(\Gamma)}$. Thus $\mathcal{F} \cap \mathcal{D}$ (resp. 
$\mathcal{P} \cap \mathcal{D}$) is a definable subcategory and it contains all the 
finite-dimensional band modules.  By \cite[Thm.~A]{RinZg}, the remaining band modules are in 
also in $\mathcal{F} \cap \mathcal{D}$ (resp. $\mathcal{P} \cap \mathcal{D}$).
\end{proof}

\subsubsection{Extensions between indecomposable pure-injective modules.}\label{Sec: Extensions in A}

In this section, specifically in Theorem \ref{Thm: extensions}, we prove that there is a very explicit description of the extensions between the indecomposable pure-injective $A(\Gamma)$-modules in terms of crossings of the corresponding associated arcs.  

\begin{definition}\label{def:cross-in-3-cycle}
Let $N$ and $M$ be string or band modules for $A(\Gamma)$
and let $\alpha$, $\delta$ be their respective associated arcs in $(\mathrm{S},\mathrm{M})$. 
Then we say that $\alpha$ and $\delta$ \textbf{cross in a 3-cycle} if we have 
one of the following local configuration with $\gamma_i,\gamma_j,\gamma_k$ bridging 
arcs in the triangulation. 
\[
\includegraphics[width=7cm]{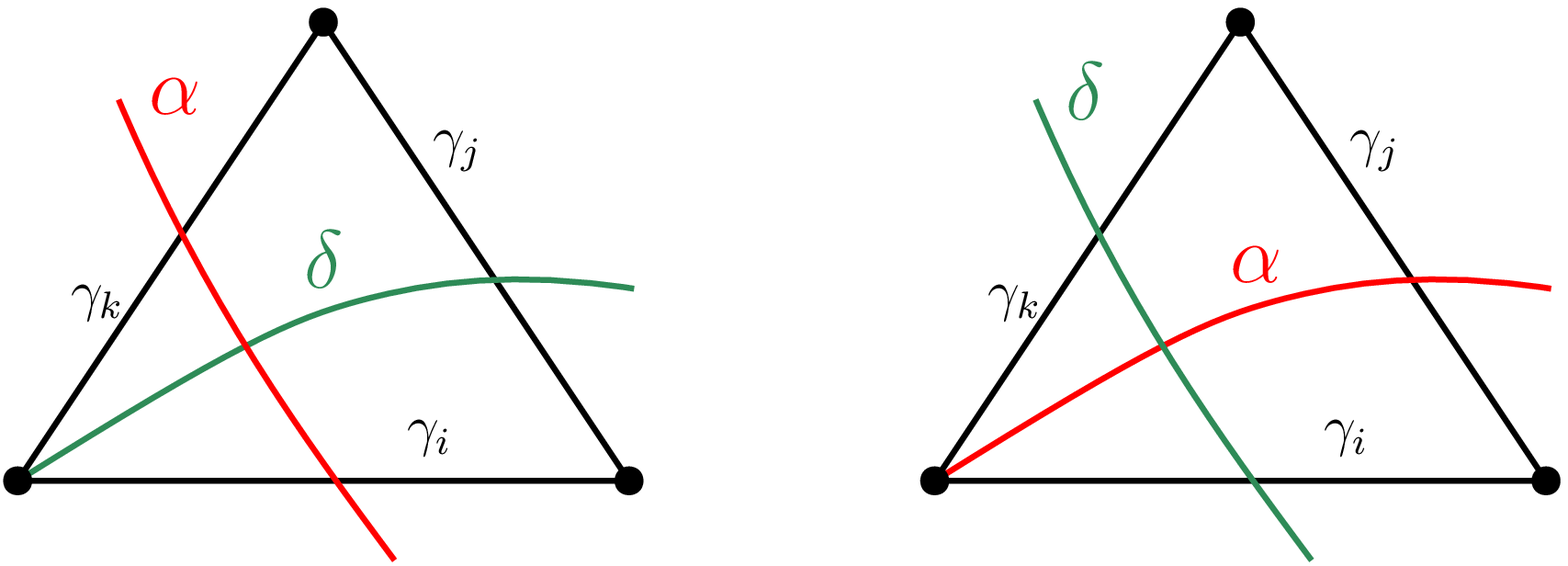}
\]
Similarly, we will say that \textbf{$\alpha$ and $\delta$ do not cross outside of 3-cycles} 
if the only crossings between $\alpha$ and $\delta$ occur as in the above diagrams.
\end{definition}

The left-hand picture in Definition~\ref{def:cross-in-3-cycle} 
may be expressed in terms of the associated strings by saying we have 
that $\alpha$ and $\delta$ cross in a 3-cycle if there exists a 3-cycle 
$\xymatrix@R=4mm@C=6mm{i \ar[r]^d & j \ar[d]^c \\ & k \ar[ul]^a }$ 
(so necessarily $cd, ac, da \in I(\Gamma)$) 
such that $u_\alpha$ contains the letter $a$ or $a^{-1}$ and we have that $s(u_\delta) = j$ 
(assuming without loss of generality that $\delta(0)$ is shown in the diagram). 
An analogous description can be given for the the right-hand picture.

In Theorem \ref{Thm: extensions} we will show that all crossings between arcs that are not in a 3-cycle indicate 
the existence of extensions between the associated modules.

\begin{theorem}\label{Thm: extensions}
Let $M$ and $N$ be string or band modules over $A(\Gamma)$ and let $\alpha$ and $\delta$ be 
their respective associated arcs.  
If $M$ and $N$ are not both band modules, then 
$\Ext^1_{A(\Gamma)}(M,N) = 0 = \Ext^1_{A(\Gamma)}(N,M)$ if and only if $\alpha$ and $\delta$ 
do not cross outside of 3-cycles.
\end{theorem}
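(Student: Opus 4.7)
My plan is to split the proof into cases based on whether each of $M, N$ is a finite-dimensional string module, an infinite-dimensional string module (coming from an asymptotic arc), or a band module. The hypothesis that $M$ and $N$ are not both band modules means the associated arc $\alpha$ or $\delta$ (or both) is a finite or asymptotic arc, so there is always a ``string-like'' endpoint to anchor the combinatorial analysis. In each case I want to produce an explicit dictionary between nonzero elements of $\Ext^1_{A(\Gamma)}(M,N)$ and crossings of $\alpha, \delta$ in $(\mathrm{S},\mathrm{M})$ that do not occur inside a $3$-cycle.

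The base case is that of two finite-dimensional string modules $M = M(u_\alpha)$, $N = M(u_\delta)$, where I would invoke the combinatorial basis for $\Ext^1$ between string modules over a gentle algebra (à la Crawley-Boevey/Schröer): nonzero classes correspond to ``admissible pairs'' given by a letter $a$ together with a choice of substring-gluing of $u_\alpha$ and $u_\delta$ along $a$, modulo the relations of $A(\Gamma)$. The geometric translation is clean: such admissible data is exactly a transverse intersection of $\alpha$ and $\delta$, together with choice of which of the two modules is on top. Intersections inside a $3$-cycle $\xymatrix@C=3mm{i\ar[r]^d & j\ar[d]^c\\ & k\ar[ul]^a}$ correspond to admissible pairs whose extension modules are killed by the relations $cd$, $ac$, $da$ generating $I(\Gamma)$, so they do not contribute to $\Ext^1$; all other crossings do. This handles the ``finite vs.\ finite'' subcase of both the \emph{if} and \emph{only if} directions.

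For the remaining subcases, where at least one of $M,N$ is infinite-dimensional (an asymptotic string module, or a band module $M(\lambda, n)$ with $n \in \mathbb{N}\cup \{\infty,-\infty\}$, or the generic module $G$), I would pass to the derived category. Because $A(\Gamma)$ is $1$-Iwanaga-Gorenstein (used in the proof of Lemma \ref{Lem: band-Z-id}) and the relevant modules have projective and injective dimension $\leq 1$, the Ext$^1$ groups coincide with $\Hom$ groups in $\mathrm{D}(\Mod{A(\Gamma)})$ between suitable shifts of projective or injective resolutions. The combinatorial model for the derived category of a gentle surface algebra (graded curves on $(\mathrm{S},\mathrm{M})$) identifies these $\Hom$ groups with intersections of graded curves; for our string and band complexes, the underlying curves are precisely $\alpha$ and $\delta$, and crossings inside $3$-cycles again correspond to morphisms that are zero because of the defining relations. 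Once the dictionary is set up it produces the required equivalence in both directions.

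The main obstacle is making the derived-category model work uniformly for all the infinite-dimensional pure-injective modules listed in Theorem \ref{Thm: ind pinj}: the asymptotic string modules and the non-finite-dimensional band modules ($M(\lambda,\pm \infty)$ and $G$) do not sit in $\mathrm{D}^b(A(\Gamma))$ in the naive sense, and computing their Ext groups requires working with direct and inverse limits of finite-dimensional complexes and controlling the behaviour of the intersection model under these limits. Because this bookkeeping is lengthy and technical, I would carry it out case-by-case in Appendix \ref{App: Ext}, as indicated in the introduction; the body of the paper would only state and use the correspondence encoded in Theorem \ref{Thm: extensions}.
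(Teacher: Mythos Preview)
Your outline matches the paper's broad structure---finite--finite case via existing literature, remaining cases deferred to a derived-category analysis in Appendix~\ref{App: Ext}---but there is a genuine gap concerning the generic module $G$. The paper does \emph{not} treat $G$ in the appendix: there is no band complex associated to $G$, because the tensor factor $\k(\Phi)$ does not fit the layered unfolded-diagram combinatorics used for $M(\lambda, n)$ with $n \in \mathbb{N}\cup\{\pm\infty\}$. Instead the paper handles $G$ in the body of the proof, using results of Ringel \cite{RinZg}: $G^{(J)}$ is a direct limit of copies of an adic module (to compute $\Ext^1(G, N)$ via an inverse-limit/dimension argument), and $G$ is a direct summand of powers of a Pr\"ufer module and appears as the cokernel in a pure-exact sequence of band modules (to compute $\Ext^1(N, G)$). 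These reductions bring the $G$-case back to the adic and Pr\"ufer cases, which \emph{are} covered by the appendix. Your plan to absorb $G$ into a uniform graded-curve argument would need a replacement for this step.

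Two smaller points. Your description of the base case only names one type of extension (a ``letter $a$ together with a substring-gluing''), but the standard extensions come in two flavours---arrow extensions and overlap extensions (Proposition~\ref{prop:list-extensions})---and both are needed for the correspondence with crossings. And your explanation of why $3$-cycle crossings do not contribute is not quite right: it is not that an admissible pair is formed and then killed by the relations $cd, ac, da$; rather, when $\alpha$ and $\delta$ cross in a $3$-cycle (so $s(u_\delta) = j$ and $u_\alpha$ contains the letter $a\colon k \to i$), the local configuration simply provides no arrow- or overlap-extension data to begin with. The relevant reference for the finite--finite case is \cite{CS} (\c{C}anak\c{c}{\i}--Schroll), not Crawley-Boevey/Schr\"oer.
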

\begin{proof}
The case where $M$ and $N$ are both finite-dimensional string modules is covered 
in \cite{CS}. 
We consider the case where $M$ or $N$ is the generic module; all other cases are 
addressed in Appendix \ref{App: Ext} since the arguments require the derived category.  
Throughout the proof we will denote the group $\Ext^1_{A(\Gamma)}(X, Y)$ by $\Ext^1(X, Y)$ 
for any pair $X,Y$ of $A(\Gamma)$-modules.

First suppose that $M$ is the generic module and $N$ is a string module.  Then the associated 
arc of $M$ is $\alpha = \beta$.  By \cite[Prop.~4]{RinZg}, there exists an endomorphism $\phi$ 
of an adic module $M(\lambda, -\infty)$ such that 
$M(\lambda, -\infty) \overset{\phi}{\to} M(\lambda, -\infty) \overset{\phi}{\to}  \dots$ is a 
direct system of monomorphisms and the direct limit $L$ is isomorphic to $M^{(J)}$ for 
some set $J$ (where $M^{(J)}$ denotes the direct sum of copies of $M$ indexed by $J$). 
This induces an inverse system 
\[
\Ext^1(M(\lambda, -\infty), N) \overset{\Ext^1(\phi,N)}{\longleftarrow}\Ext^1(M(\lambda, -\infty), N) \overset{\Ext^1(\phi,N)}{\longleftarrow}\dots
\] 
Denote its inverse limit by $L'$. 
It then follows that $\Ext^1(M,N)=0$ if and only if 
$0= \Ext^1(M,N)^J \cong \Ext^1(M^{(J)},N) \cong \Ext^1(L,N) \cong L'$ 
where the final 
isomorphism follows from \cite[Prop.~I.10.1]{Auslander}.  By Appendix \ref{App: Ext}, we 
have that $\Ext^1(M(\lambda, -\infty), N) =0$ and  if and only if $\beta$ and $\delta$ do 
not cross outside of 3-cycles.  Thus, it suffices to show that $L' = 0$ if and only if 
$\Ext^1(M(\lambda, -\infty), N) =0$.

Clearly, if $\Ext^1(M(\lambda, -\infty), N) =0$, then $L'=0$. 
We will therefore prove the claim by showing that 
$\dim{\Ext^1(M(\lambda, -\infty), N)} \leq \dim{L'}$. The inverse system consists of 
epimorphisms because $\mathrm{coker}{\phi}$ is a band module and so 
$\pd{A(\Gamma)}{\mathrm{coker}{\phi}} \leq 1$ by Lemma \ref{Lem: band-Z-id}. 
Moreover, we can express the universal map $L' \to \Ext^1(M(\lambda, -\infty), N)$ as an 
inverse limit of epimorphisms of the form 
\[
\Ext^1(M(\lambda, -\infty), N) \overset{\Ext^1(\phi^n,N)}{\longrightarrow}\Ext^1(M(\lambda, -\infty), N)
\] 
and so it is an epimorphism (see, for example, \cite[Lem.~2.2]{Ext}). This proves that indeed 
$\dim{\Ext^1(M(\lambda, -\infty), N)} \leq \dim{L'}$.

Now suppose that $N$ is the generic module and $M$ is a string module. 
Then the associated arc of $N$ is $\delta = \beta$.  By \cite[Prop.~3]{RinZg}, $N$ is a direct 
summand of $M(\lambda, \infty)^K$ for any Pr\"ufer module $M(\lambda, \infty)$ and any 
infinite set $\k$.  If $\alpha$ and $\beta$ only cross in 3-cycles, then, by Appendix \ref{App: Ext}, 
we have that $0 = \Ext^1(M, M(\lambda, \infty))^K \cong \Ext^1(M, M(\lambda, \infty)^K)$ and 
so $\Ext^1(M,N) = 0$.  

Conversely, suppose $\alpha$ and $\beta$ cross outside of a 3-cycle.  Then $\alpha$ must be 
a bridging arc and $\Ext(M,X)\neq0$ for any finite-dimensional band module $X$. 
By \cite[Prop.~5]{RinZg}, for any infinite set $\mathcal{X}$ of finite-dimensional band 
modules from pairwise different homogeneous tubes, there exists a set $H$ such that 
$0 \to \bigoplus_{X\in \mathcal{X}}X \to \prod_{X\in \mathcal{X}}X \to N^{(H)} \to 0$ 
is an exact sequence.  
In fact, this is a pure-exact sequence (see, for example, \cite[Lem.~2.1.20]{PSL}) and so, 
combining this with the fact that, by Lemma \ref{Lem: band-Z-id}, we have 
$\id{A(\Gamma)}{\bigoplus_{X\in \mathcal{X}}X}\leq 1$, we have an exact sequence 
\[ 
0 \to \Ext^1(M, \bigoplus_{X\in \mathcal{X}}X) \to \Ext^1(M, \prod_{X\in \mathcal{X}}X) 
\to \Ext^1(M, N^{(H)}) \to 0.
\] 
Finally, since $M$ is finite-dimensional, $\Ext^1(M,-)$ commutes with direct sums and direct 
products.  It therefore follows that $\Ext^1(M, N^{(H)}) \cong \Ext^1(M, N)^{(H)} \neq0$ and 
hence $\Ext^1(M, N)$ is non-zero.
\end{proof}

Next we consider the case where both indecomposable pure-injective modules are band modules.  Since finite-dimensional band modules have self extensions, we only need to consider extensions between non-isomorphic infinite-dimensional band modules.

\begin{proposition}
If $M$ and $N$ are both band modules over $A(\Gamma)$, then the following statements hold. \begin{enumerate}
\item Suppose $M$ and $N$ are both infinite dimensional.  Then 
$\Ext^1_{A(\Gamma)}(M,N) \neq 0$ if and only if $M \cong M(\lambda,\infty)$ and 
$N \cong M(\lambda,-\infty)$ for $\lambda \in \k^*$.
\item If $M \cong M(\lambda,n)$ for $n \in \mathbb{N}$, then $\Ext^1_{A(\Gamma)}(M, M) \neq 0$.
\end{enumerate}
\end{proposition}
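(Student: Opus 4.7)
The plan is to reduce to the tame hereditary algebra $B=A(\Gamma)/J$ from Remark~\ref{Rem: idem-ideal} and then invoke the well-understood structure of homogeneous tubes. By Remark~\ref{rem:ideal-image} every band module lies in the essential image of the restriction functor $f^*\colon \Mod{B}\to \Mod{A(\Gamma)}$, and since $J$ is an idempotent ideal the change-of-rings comparison yields $\Ext^1_{A(\Gamma)}(M,N)\cong \Ext^1_B(M,N)$ for all band modules $M,N$. Hence the whole computation may be carried out in $\Mod{B}$, where $B\cong \k\tilde{A}_{s-1}$.

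Over $B$, the band modules $M(\lambda,x)$ with $x\in\mathbb{N}\cup\{\infty,-\infty\}$ and $\lambda\in\k^*$ live in the homogeneous tube $\mathcal{T}_\lambda$, and $\tau_B M(\lambda,n)\cong M(\lambda,n)$. Distinct tubes are pairwise Hom- and Ext-orthogonal in any tame hereditary algebra, so $\Ext^1_B(M(\lambda,x),M(\mu,y))=0$ whenever $\lambda\neq\mu$, which accounts for most of the ``only if'' direction in part~(1). Part~(2) is then immediate from the Auslander--Reiten formula
\[
\Ext^1_B(M(\lambda,n),M(\lambda,n))\cong D\Hom_B(M(\lambda,n),M(\lambda,n))\neq 0,
\]
since the identity is a nonzero endomorphism.

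For part~(1) with $\lambda=\mu$, the plan is to pass through the tensor description $M(\lambda,x)=M^\oplus(\beta)\otimes_R N(\lambda,x)$ and transport the Ext computation to the Dedekind domain $R=\k[\Phi,\Phi^{\pm 1}]$, where it becomes classical. Because $M^\oplus(\beta)$ is free of rank $s$ as a right $R$-module (its basis vectors $\{v_i\}$ split into $s$ disjoint $\Phi$-orbits), the functor $F:=M^\oplus(\beta)\otimes_R(-)$ is exact, and a careful unwinding of the adjunction $F\dashv \Hom_B(M^\oplus(\beta),-)$ on the tube yields a comparison $\Ext^1_B(F(L),F(L'))\cong \Ext^1_R(L,L')$ for indecomposable pure-injective $R$-modules $L,L'$. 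Over $R$ the needed Ext groups are standard: the Pr\"ufer $N(\lambda,\infty)$ is $R$-injective, giving $\Ext^1_R(-,N(\lambda,\infty))=0$ and handling both $(\text{Pr\"ufer},\text{Pr\"ufer})$ and $(\text{adic},\text{Pr\"ufer})$; the adic $N(\lambda,-\infty)$ has vanishing self-extensions by a $\mathrm{lim}^1$ argument on the defining inverse system; and the unique non-vanishing group $\Ext^1_R(N(\lambda,\infty),N(\lambda,-\infty))$ is witnessed by the non-split extension obtained from inverting the uniformiser in the completion $\hat R_\lambda=N(\lambda,-\infty)$.

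The principal obstacle is establishing the comparison $\Ext^1_B(F(L),F(L'))\cong \Ext^1_R(L,L')$ for infinite-dimensional $L,L'$: flat base change alone yields only the Hom-level comparison, and a Grothendieck spectral sequence argument (tracking the derived functors of the right adjoint $\Hom_B(M^\oplus(\beta),-)$) is needed to deduce the Ext-level statement. Should this route prove technical, a pragmatic alternative is to bypass the general comparison and instead produce the essential non-vanishing class directly: apply the exact functor $F$ to the explicit non-split $R$-extension above to obtain a short exact sequence $0\to M(\lambda,-\infty)\to F(E)\to M(\lambda,\infty)\to 0$ in $\Mod{B}$, and verify non-splitting by restricting its Hom-dual to a suitable finite-dimensional test object in $\mathcal{T}_\lambda$.
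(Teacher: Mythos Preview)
Your reduction to $B\cong\k\tilde{A}_{s-1}$ via the idempotent ideal $J$ is exactly what the paper does. From that point on, however, the paper simply invokes \cite[Lem.~2.7]{BuanKrause} for statement~(1) and declares statement~(2) well-known; your Auslander--Reiten argument for~(2) is a perfectly good substitute for the latter.

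For~(1) your route through $R=\k[\Phi,\Phi^{-1}]$ is more ambitious, and you are right to flag the comparison $\Ext^1_B(F(L),F(L'))\cong\Ext^1_R(L,L')$ as the crux. This is establishable, but it is genuinely more work than citing \cite{BuanKrause}: one needs that $F$ is fully faithful on the relevant indecomposables and that its essential image is extension-closed in $\Mod{B}$ (so that $\Ext^1$ computed inside the image agrees with $\Ext^1_B$, using that $B$ is hereditary). Your proposed fallback of pushing the explicit $R$-extension through $F$ only witnesses the single non-vanishing; it does not give you the vanishing of $\Ext^1_B(M(\lambda,-\infty),M(\lambda,-\infty))$, $\Ext^1_B(M(\lambda,\infty),M(\lambda,\infty))$ or $\Ext^1_B(M(\lambda,-\infty),M(\lambda,\infty))$, for which you still need the comparison or a separate argument.

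There is also a genuine omission: the generic module $G$ is an infinite-dimensional band module and must be covered by statement~(1), but your plan never treats it. The sentence ``distinct tubes are pairwise Hom- and Ext-orthogonal'' does not apply to pairs involving $G$, since $G$ does not sit in any tube. If the $R$-comparison is established then $G=F(\k(\Phi))$ is handled automatically (as $\k(\Phi)$ is $R$-injective and $\Ext^1_R(\k(\Phi),N(\lambda,-\infty))=0$), but as written the plan leaves all Ext-groups involving $G$ unaddressed.
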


\begin{proof}
By  Remark \ref{rem:ideal-image}, 
we have $\Ext^1_{A(\Gamma)}(M,N) \cong \Ext^1_{B}(M,N)$ for any pair of indecomposable 
band modules where $B\cong \k \tilde{A}_{s-1}$.
The first statement then follows from 
\cite[Lemma 2.7]{BuanKrause} and the second statement is well-known.
\end{proof}

\paragraph{Middle terms of extensions}

In the case of string and 1-dimensional band modules, it is possible to identify the middle terms 
of the extensions in $\Mod{A(\Gamma)}$.  In fact, Proposition~\ref{prop:list-extensions} gives
a complete list of the extensions between string and 1-dimensional band modules that correspond to crossings of the associated arcs in $(\mathrm{S}, \mathrm{M})$. 

\begin{definition}
Consider the unique (periodic) $\mathbb{Z}$-string $u_\beta$. Consider the module 
$M^{\Pi}(\beta)$ that is defined exactly as $M^\oplus(\beta)$ but taking a direct product of 
copies of $\k$ instead of a direct sum.  In particular, we have 
\[
M^{\Pi}(\beta)\cong\prod_{i\in \mathbb{Z}}\k_i
\]
\noindent 
(as vector spaces) where $\k_i \cong \k$ for each $i\in \mathbb{Z}$.  Define $M^-(\beta)$ to be 
the submodule of $M^{\Pi}(\beta)$ such that $(\lambda_i)_{i\in\mathbb{Z}}$ is in 
$M^-(\beta) \subseteq M^{\Pi}(\beta)$ if and only if $\lambda_i=0$ for $i\ll 0$. Define 
$M^+(\beta)$ to be the submodule of $M^{\Pi}(\beta)$ such that $(\lambda_i)_{i\in\mathbb{Z}}$ 
is in $M^+(\beta) \subseteq M^{\Pi}(\beta)$ if and only if $\lambda_i=0$ for $i\gg 0$.  
\end{definition}

\begin{definition} 
Let $A(\Gamma)$ be the algebra associated to a triangulated $\Gamma$ of the annulus 
$(\mathrm{S},\mathrm{M})$ and let $\alpha$ and $\delta$ be arcs in 
$(\mathrm{S},\mathrm{M})$. 

In the 
statements of Proposition~\ref{prop:list-extensions}, 
$v_Le$, $d^{-1}v_R$, $w_Lc^{-1}$ and $aw_R$ are (possibly trivial) strings such that $a,c,d,e$ 
are direct letters whenever they are contained in a non-trival string.  Moreover $m$ is a 
(possibly trivial) finite string.  We define 
\[
*_\delta = \begin{cases} \prod & \text{if } \delta \text{ is an expanding asymptotic arc};\\
\bigoplus & \text{if } \delta \text{ is a contracting asymptotic arc};\\
\emptyset &\text{if } \delta \text{ is a finite arc}.\end{cases}
\]

\[
*_\alpha = \begin{cases} \prod & \text{if } \alpha \text{ is an expanding asymptotic arc};\\
\bigoplus & \text{if } \alpha \text{ is a contracting asymptotic arc};\\
\emptyset &\text{if } \alpha \text{ is a finite arc}.\end{cases}
\] 
and the notation $M^\emptyset(\alpha)$ will mean $M(\alpha)$ 
for any finite arc $\alpha$.
\end{definition}

\begin{proposition}\label{prop:list-extensions} 
Let $A(\Gamma)$ be the algebra associated to a triangulated $\Gamma$ of the annulus 
$(\mathrm{S},\mathrm{M})$ and let $\alpha$ and $\delta$ be arcs in 
$(\mathrm{S},\mathrm{M})$. 
\begin{enumerate}
\item 
Suppose $u_\delta = v_Lemd^{-1}v_R$ and $u_\alpha = w_Lc^{-1}maw_R$ are such that 
$v_Le$ and $w_Lc^{-1}$ are not both trivial and $d^{-1}v_R$ and $aw_R$ are not both trivial. 
Then there exists a non-split extension 
\[
0 \to M(\delta) \to M^{*_\delta}(\varepsilon_1)\oplus M^{*_\alpha}(\varepsilon_2) \to M(\alpha) \to 0
\] 
where  $u_{\varepsilon_1} = v_Lemaw_R$ and $u_{\varepsilon_2} = w_Lc^{-1}md^{-1}v_R$. 
We call such an extension an \textbf{overlap extension} between string modules.

\item 
Suppose there exists $l\geq 1$ such that $u_\alpha = v_Lemd^{-1}v_R$ and 
$b^l = w_Lc^{-1}maw_R$.  If we choose $l$ to be minimal, then there exists a non-split 
overlap extension 
\[
0 \to M(\alpha) \to M(\varepsilon) \to M(\lambda, 1) \to 0
\] 
for all $\lambda \in \k^*$ where 
\begin{enumerate}
\item[(i)] if $l=1$ then $u_\varepsilon = v_Lemaw_Rw_Lc^{-1}md^{-1}v_R$; and
\item[(ii)] if $l>1$ then $u_\varepsilon = v_Lem_2bm_1d^{-1}v_R$ such that $m = m_2m_1$ and $b = m_1aw_R$.
\end{enumerate} 
We call such an extension an \textbf{overlap extension} from a band module to a string module.

\item 
Suppose there exists $l \geq 1$ such that $b^l = v_Lemd^{-1}v_R$ and 
$u_\delta = w_Lc^{-1}maw_R$.  If we choose $l$ to be minimal, then there exists a non-split 
overlap extension 
\[
0 \to M(\lambda, 1)  \to M(\varepsilon) \to M(\delta) \to 0
\] 
for all $\lambda \in \k^*$ where 
\begin{enumerate}
\item[(i)] if $l=1$ then $u_\varepsilon = w_Lc^{-1}md^{-1}v_Rv_Lemaw_R$; and
\item[(ii)] if $l>1$ then $u_\varepsilon = w_Lc^{-1}m_2bm_1aw_R$ such that $m = m_2m_1$ 
and $b = m_1d^{-1}v_R$.
\end{enumerate} 
We call such an extension an \textbf{overlap extension} from a string module to a band module.

\item 
Suppose there exists an arrow $a\in Q_1$ such that either $u_\alpha a^{-1} u_\delta^{-1}$ or 
$u_\delta au_\alpha^{-1}$ is a finite string, an $\mathbb{N}$-string or $u_\beta$ and denote 
this string by $u_\varepsilon$.  Then there exists a non-split extension 
\[ 
0 \to M(\delta) \to M^*(\varepsilon) \to M(\alpha) \to 0
\] 
where 
\[
* = \begin{cases} + & \text{if } u_\varepsilon = u_\alpha a^{-1} u_\delta^{-1} = u_\beta;\\
- & \text{if } u_\varepsilon = u_\delta a u_\alpha^{-1} = u_\beta;\\
*_\alpha & \text{if } u_\varepsilon = u_\alpha a^{-1} u_\delta^{-1} \neq u_\beta;\\
*_\delta & \text{if } u_\varepsilon = u_\delta a u_\alpha^{-1} \neq u_\beta. 
 \end{cases}
\]  

We call such an extension an \textbf{arrow extension}.
\end{enumerate}
We refer to arrow and overlap extensions as \textbf{standard extensions}.
\end{proposition}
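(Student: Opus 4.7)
My plan is to construct each of the four extensions explicitly as short exact sequences of $A(\Gamma)$-modules, working at the level of the canonical $\k$-bases for string and band modules. For a string $u = \dots a_2 a_1$, the associated module $M^*(\cdot)$ has basis vectors $\{v_i\}$ indexed by the vertices $i_k$ traversed by $u$, with each arrow of $Q$ acting as a shift $v_k \mapsto v_{k\pm 1}$ determined by whether the letter $a_k$ is direct or inverse. So in each case of the proposition I would identify explicit bases for $M(\alpha), M(\delta)$, the claimed middle term, and the new strings $\varepsilon$, then define the injection and surjection on basis vectors and verify module-linearity, exactness, and non-splitness.

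For the overlap extensions in parts (1)–(3), the key point is that the shared substring $m$ provides a common block of basis vectors in $M(\alpha)$ and $M(\delta)$; cutting and re-pasting along $m$ produces $\varepsilon_1 = v_L e m a w_R$ and $\varepsilon_2 = w_L c^{-1} m d^{-1} v_R$ (respectively the single string $\varepsilon$ in (2) and (3), with the $l=1$ and $l>1$ cases differing only in whether the band has fully wound around once). The injection $M(\delta) \hookrightarrow M^{*_\delta}(\varepsilon_1) \oplus M^{*_\alpha}(\varepsilon_2)$ sends the basis vector of $M(\delta)$ indexing a position on the $v_L e$ side into $\varepsilon_1$, and on the $d^{-1} v_R$ side into $\varepsilon_2$, doubling up on the shared $m$ by the antidiagonal map so that the image of the overlap lies in the kernel of the corresponding projection to $M(\alpha)$; the surjection onto $M(\alpha)$ quotients by this image. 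Linearity of both maps is verified arrow-by-arrow using the shape of the hypothesis on $v_L e, d^{-1} v_R, w_L c^{-1}, a w_R$ (in particular the assumption that the letters adjacent to $m$ are direct or inverse as prescribed), and non-splitness follows because in each case the middle term is indecomposable or a direct sum of strictly different indecomposables from $M(\alpha) \oplus M(\delta)$.

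For the arrow extensions in part (4), given an arrow $a$ with $u_\varepsilon = u_\alpha a^{-1} u_\delta^{-1}$ (say), I would define the middle term $M^*(\varepsilon)$ as the unique string/band module on this string, take the inclusion $M(\delta) \hookrightarrow M^*(\varepsilon)$ as the inclusion of the basis indexed by the $u_\delta^{-1}$ tail, and let the quotient be $M(\alpha)$; the distinction between the superscripts $+, -, *_\alpha, *_\delta$ is dictated by which end of $\varepsilon$ is infinite in $M^*(\varepsilon)$, which in turn records whether the asymptotic end comes from $\alpha$, from $\delta$, or from the periodic $\mathbb{Z}$-string $u_\beta$ (where the direction of the wrapping determines $+$ vs.\ $-$). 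Non-splitness is again automatic since the middle term has a single indecomposable summand joining the two sides of $\varepsilon$ across the letter $a$.

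The main obstacle will be the bookkeeping for the infinite-dimensional cases, where one has to be careful about whether each end of the string yields a direct sum (contracting) or a direct product (expanding) and to check that the injection and surjection are well defined under these completions; in particular, in parts (2), (3) the wrapping distinction between $l=1$ and $l>1$ has to be matched against the band combinatorics so that $u_\varepsilon$ is actually a valid string of $A(\Gamma)$, and in part (4) the $\pm$ cases for the generic direction of $u_\beta$ require checking that the chosen tail of $\varepsilon$ corresponds to the correct product/sum submodule $M^\pm(\beta)$ defined above.
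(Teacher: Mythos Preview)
Your proposal is correct and is essentially the approach the paper takes: the paper's proof simply cites \cite{CPS} for the finite-dimensional case and asserts that the infinite-dimensional extensions are ``completely analogous,'' so your plan of writing out the explicit basis-level construction (injection and surjection on the canonical bases of the string and band modules, verifying exactness and non-splitness, and handling the $\prod$/$\bigoplus$ bookkeeping for the expanding/contracting asymptotic ends) is precisely the content being deferred to. The only point worth flagging is that the paper, and the reference it cites, also have a parallel derived-category description (standard maps between string/band complexes in $\mathrm{K}(A(\Gamma))$) which gives a uniform reason why these are \emph{all} the extensions; but for the statement as written---existence of the listed non-split extensions---your direct construction is exactly what is needed.
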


\begin{proof}
In \cite{CPS}, 
the authors prove this result for finite-dimensional modules. The extensions between infinite-dimensional modules are completely analogous.  
\end{proof}

\begin{remark}\begin{enumerate}
\item In cases \textbf{(2)} and \textbf{(3)} the arcs $\alpha$ and $\delta$ must be finite arcs 
since otherwise the above conditions on the strings $u_\alpha$ and $u_\delta$ cannot be 
satisfied.
\item In case \textbf{(4)} $\alpha$ must be contracting and $\delta$ must be expanding.  
\end{enumerate} 
\end{remark}

\begin{example}
The first three pictures illustrate case 1 of Proposition~\ref{prop:list-extensions}.  
\[
\includegraphics[width=15cm]{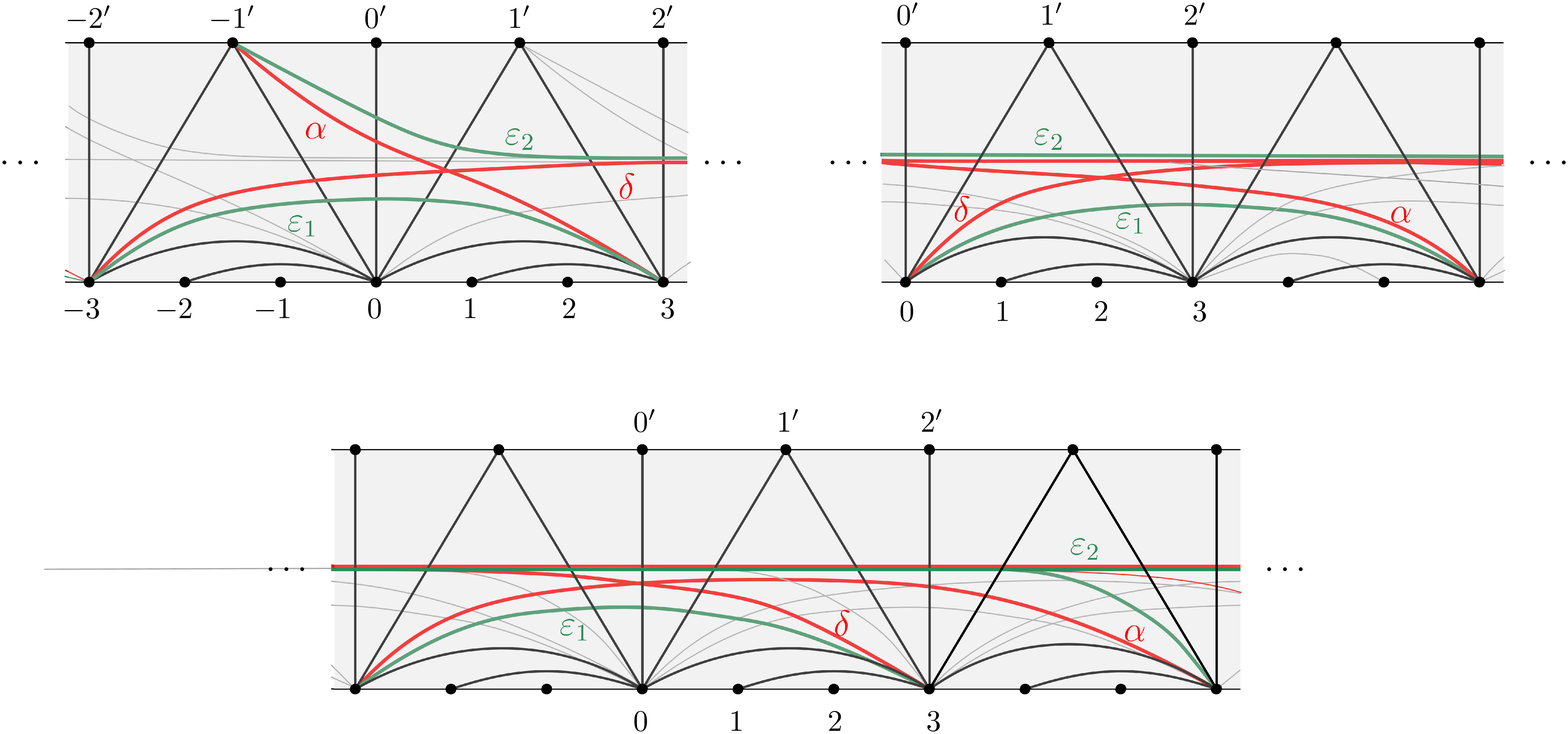}
\]
The second three pictures illustrate case 4. 
\[
\includegraphics[width=15cm]{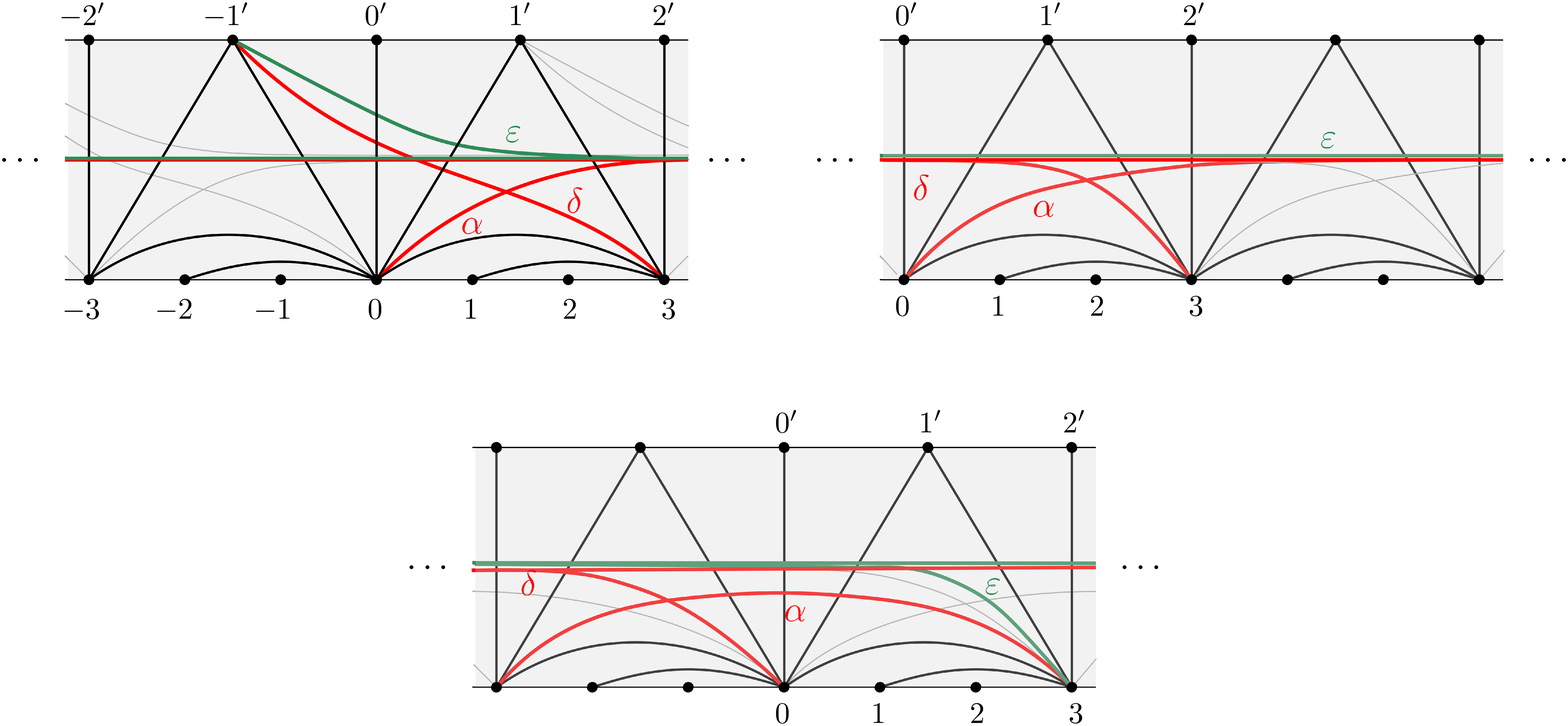}
\]
\end{example}

%
\subsection{The classification of cosilting modules.}\label{Sec: Asymp_tri}

In this section will classify the cosilting modules over $A(\Gamma)$ for a triangulation $\Gamma$ 
of an annulus $(\mathrm{S},\mathrm{M})$, with quiver $Q=Q(\Gamma)$ and 
$A(\Gamma)=\k Q/I(\Gamma)$. 
We begin by introducing the $A(\Gamma)$-modules $M(t)$ that we will show are cosilting.

Let $T$ be a collection of non-crossing arcs in $(\mathrm{S},\mathrm{M})$. We will call $T$ \textbf{strictly asymptotic} if it contains an asymptotic arc. 
If either 
\begin{enumerate}
\item[(i)] $t = (T, P_1, P_2)$ where $T$ is strictly asymptotic and $P_1$ and $P_2$ 
are disjoint subsets of $\k^*$ or
\item[(ii)] $t = T$ where $T$ is not strictly asymptotic,
\end{enumerate} 
then we call $t$ a \textbf{partial asymptotic triangulation}. 
If $T$ is a maximal collection of non-crossing asymptotic arcs and 
$\k^* = P_1\cup P_2$ (when $t$ is strictly asymptotic), 
then $t$ is called an \textbf{asymptotic triangulation}.

 \begin{remark}\label{Rem: parametrising sets}
Note that the strictly asymptotic triangulations are in bijection with the set $\mathcal{A}(\mathrm{S}, \mathrm{M}) \times \mathcal{P}(\k^*)$ defined in the Main Theorem in the introduction.  Similarly the asymptotic triangulations that are not strictly asymptotic are in bijection with the set $\mathcal{T}(\mathrm{S}, \mathrm{M})$.   
\end{remark}

Let $t$ be a partial asymptotic triangulation, then we define a set of indecomposable pure-injective objects as follows.
If $t$ is strictly asymptotic, then let
\[ 
\mathcal{N}_t :=  \{ M(\alpha) \mid \alpha \in T\} \cup \{M(\lambda, \infty) \mid \lambda \in P_1\} \cup 
\{M(\lambda, -\infty) \mid \lambda \in P_2\}\cup\{G\}.
\] 
If $t$ is not strictly asymptotic, then let \[\mathcal{N}_t :=  \{ M(\alpha) \mid \alpha \in T\}.\]

\noindent 
For each partial asymptotic triangulation $t$, let \[M(t) := \prod_{N \in \mathcal{N}_t} N.\]

%
\subsubsection{Annihilators of asymptotic triangulations.}\label{Sec: annihilators}

We will consider partial asymptotic triangulations $t$ and consider the associated $A(\Gamma)$-module $M(t)$.  In order to show that $M(t)$ is a cosilting module over $A(\Gamma)$ whenever $t$ is an asymptotic triangulation, we will make use of the characterisation of cosilting modules over left artinian rings given in Corollary \ref{cor: max rig cosilt}.  The first step will be to understand the annihilator ideal $\ann{M(t)}$.  The following observation holds for an arbitrary gentle algebra; see \cite{Ringel} for definitions of (possibly infinite-dimensional) string and band modules.

\begin{observation}\label{Obs: ann}
Let $B = \k Q/I$ be an arbitrary gentle algebra.  
Then the following hold.  
\begin{enumerate}
\item If $M$ is a string module corresponding to a string $u$, then $\ann{M}$ 
has a $\k$-linear basis consisting of the paths $p$ in $(Q,I)$ such that neither 
$p$ nor $p^{-1}$ is contained in $u$. 
\item 
If $N$ is a band module corresponding to a band $b$, then $\ann{N}$ has a $\k$-linear basis 
consisting of the paths $q$ in $(Q, I)$ such that neither $q$ nor $q^{-1}$ is contained in 
$\mathbb{Z}$-string ${}^\infty b^\infty$.
\end{enumerate}
\end{observation}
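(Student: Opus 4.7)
The plan is to analyse how an individual path in $(Q,I)$ acts on the standard basis of $M$ (respectively $N$) and deduce both halves of the statement from this analysis.

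First I would establish the path-action rule. For a path $p = b_m\cdots b_1 \in (Q,I)$ and a basis vector $v_k$ of $M(u)$, applying $b_1$ sends $v_k$ to $v_{k+1}$ when $a_{k+1}=b_1$, to $v_{k-1}$ when $a_k = b_1^{-1}$, and to zero otherwise. The defining condition of a string, $a_{k+1}\neq a_k^{-1}$, rules out both matches happening at once, so after the first step the direction (forward or backward) is locked in. Iterating, $p\cdot v_k$ is non-zero precisely when either $b_m\cdots b_1$ appears as a block of direct letters of $u$ starting at position $k+1$, or $b_1^{-1}\cdots b_m^{-1}$ appears as a block of inverse letters of $u$ ending at position $k$; in those cases $p\cdot v_k$ equals $v_{k+m}$ or $v_{k-m}$ respectively. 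The same rule applies to $N$ after replacing $u$ by the bi-infinite periodic string ${}^\infty b^\infty$.

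This immediately yields the set-theoretic half: a path $p$ annihilates every basis vector of $M$ (respectively $N$) precisely when neither $p$ nor $p^{-1}$ is a consecutive sub-block of $u$ (respectively ${}^\infty b^\infty$). To upgrade this to the statement that such paths form a $\k$-basis of $\ann{M}$, I would prove linear independence of the remaining non-annihilating paths acting on $M$. Given a relation $\sum_i c_i p_i \in \ann{M}$ with the $p_i$ pairwise distinct non-annihilating paths, pick for each $i$ a basis vector $v_{k_i}$ on which $p_i$ acts non-trivially; no other $p_j$ can act on $v_{k_i}$ to produce the same basis vector $p_i\cdot v_{k_i}$, since two distinct paths cannot match letters of $u$ in the same direction from a common starting position, and a forward match together with a backward match land at positions $k_i + |p_i|$ and $k_i - |p_j|$, which differ in sign. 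Reading the coefficient of $p_i\cdot v_{k_i}$ in the evaluation of the relation at $v_{k_i}$ then forces $c_i = 0$.

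The main obstacle I foresee lies in the band case, where the periodic identification of $N$ (in particular the $\lambda$-twist at each wrap-around) can, at a single basis vector, make two distinct non-annihilating paths act proportionally. To still obtain linear independence, one must exploit the fact that these proportionality factors depend on the chosen basis vector in an inhomogeneous way, so that no single coefficient vector can witness proportionality at every index simultaneously; this requires carefully tracking the twist factors across the whole module.
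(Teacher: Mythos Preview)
The paper states this result as an observation without proof, so there is no reference argument to compare against. Your string-module argument is sound: the path-action rule is as you describe, and two distinct paths acting non-trivially on the same $v_k$ land at different basis vectors (forward and backward displacements have opposite sign, and within one direction the path is determined by its length), so the independence step goes through.

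In the band case, however, your ``obstacle'' is not merely a technical difficulty but a counterexample. Over the Kronecker quiver (arrows $a,b\colon 1\to 2$, no relations---this is gentle) every path or its inverse occurs in the $\mathbb{Z}$-string of the unique band, so the claimed basis of $\ann{N}$ is empty; yet $\lambda a - b$ annihilates $M(\lambda,1)$. Thus part~(2) as literally stated is false for $1$-dimensional band modules, and no twist-factor bookkeeping can rescue it there because the proportionality \emph{is} uniform across a single layer. Your inhomogeneity idea becomes correct once $n\ge 2$ or $n\in\{\pm\infty\}$ or $N=G$: the off-diagonal Jordan part (respectively the transcendental variable $\Phi$) then separates paths that agree on the diagonal, exactly as one checks by hand for $M(\lambda,2)$ in the Kronecker example. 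The paper only ever applies the observation to the band modules appearing in $\mathcal{N}_t$, all of which are infinite-dimensional, so its later arguments are unaffected; but a correct general statement of part~(2) needs the restriction $n\neq 1$, or else $\ann{N}$ replaced by the intersection of annihilators over all $\lambda$.
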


For the rest of the section, let $Q=Q(\Gamma)$ and $I=I(\Gamma)$ for $\Gamma$ a triangulation of 
an annulus, with string algebra $A(\Gamma)=\k Q/I$ and let $t$ be a (partial) asymptotic triangulation. 

\begin{lemma}\label{Lem: ann-of-ind}
The annihilator ideal $\ann{M(t)}$ in $A(\Gamma)$ has a $\k$-linear basis consisting of all paths $p$ satisfying the following conditions.\begin{enumerate}
\item Neither $p$ nor $p^{-1}$ occur as a substring of $u_\alpha$ for any $\alpha\in T$. 
\item If $t$ is strictly asymptotic, then neither $p$ nor $p^{-1}$ occur as a substring of $u_\beta$.\end{enumerate}
\end{lemma}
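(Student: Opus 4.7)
The plan is to use the general identity $\ann{M(t)} = \bigcap_{N\in \mathcal{N}_t}\ann{N}$ for a direct product, reducing the problem to computing the annihilator of each indecomposable summand separately. For each $\alpha \in T$, Observation \ref{Obs: ann}(1) provides a $\k$-basis of $\ann{M(\alpha)}$ consisting of exactly the paths $p$ in $(Q,I)$ such that neither $p$ nor $p^{-1}$ appears as a substring of $u_\alpha$. Intersecting these bases over $\alpha \in T$ yields precisely condition (1) of the lemma, which finishes the proof when $t$ is not strictly asymptotic.

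For the strictly asymptotic case, it remains to incorporate the modules $M(\lambda,\pm\infty)$ for $\lambda \in P_1 \cup P_2$ and the generic module $G$. Since $G$ is always in $\mathcal{N}_t$ when $t$ is strictly asymptotic, it suffices to identify $\ann{G}$ with the set of paths satisfying condition (2); the annihilators of the Pr\"ufer and adic modules can only be larger and hence will not strengthen the intersection further. From the explicit definition of $M^\oplus(\beta)$ with basis $\{v_k\}_{k\in \mathbb{Z}}$ indexed along the periodic $\mathbb{Z}$-string $u_\beta$, any path $p$ sends each $v_k$ either to $0$ or to a unique basis vector $v_l$, and acts non-trivially on some $v_k$ if and only if $p$ or $p^{-1}$ is a substring of $u_\beta$. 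Hence $\ann{M^\oplus(\beta)}$ is spanned by the paths satisfying condition (2), and the inclusion $\ann{M^\oplus(\beta)} \subseteq \ann{G}$ is immediate since the $A(\Gamma)$-action on $G = M^\oplus(\beta)\otimes_R \k(\Phi)$ factors through the left action on $M^\oplus(\beta)$.

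The main obstacle is the reverse inclusion $\ann{G} \subseteq \ann{M^\oplus(\beta)}$. To establish it I would exploit the $(A(\Gamma),R)$-bimodule structure on $M^\oplus(\beta)$: since the right action of $\Phi$ shifts $v_k \mapsto v_{k+s}$, the module $M^\oplus(\beta)$ is free of rank $s$ as a right $R$-module on the representatives $\{v_0,\dots,v_{s-1}\}$ of the $\Phi$-orbits. Consequently $G$ is an $s$-dimensional $\k(\Phi)$-vector space with basis $\{v_0\otimes 1,\dots,v_{s-1}\otimes 1\}$, and if $pv_k = v_l \neq 0$ in $M^\oplus(\beta)$ then $p\cdot(v_k\otimes 1) = v_l\otimes 1$ is non-zero in $G$ (rewriting $v_l$ in the preferred basis using the invertibility of $\Phi$ in $\k(\Phi)$), so $p \notin \ann{G}$. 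Intersecting the resulting identification of $\ann{G}$ with condition (1) coming from the string-module summands produces the claimed $\k$-basis of $\ann{M(t)}$.
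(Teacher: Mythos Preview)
Your approach is essentially the same as the paper's: reduce to the identity $\ann{M(t)} = \bigcap_{N\in \mathcal{N}_t}\ann{N}$ and apply Observation~\ref{Obs: ann} to each summand. The paper's proof is simply that one sentence.

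The only difference is that you invoke Observation~\ref{Obs: ann}(1) for the string summands but then bypass Observation~\ref{Obs: ann}(2), instead computing $\ann{G}$ by hand via the $(A(\Gamma),R)$-bimodule structure of $M^\oplus(\beta)$ and arguing separately that the Pr\"ufer and adic annihilators do not shrink the intersection. This is correct, but unnecessary: Observation~\ref{Obs: ann}(2) as stated applies to \emph{every} band module (finite-dimensional, Pr\"ufer, adic, and generic alike), and already asserts that each of their annihilators has $\k$-basis the paths $p$ with neither $p$ nor $p^{-1}$ a substring of ${}^\infty b^\infty = u_\beta$. Citing (2) directly collapses your last two paragraphs to a single line.
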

\begin{proof}
The lemma follows immediately from the above observation since \[\ann{M(t)} = \bigcap_{M\in \mathcal{N}_t} \ann{M}.\]
\end{proof}

Given Lemma \ref{Lem: ann-of-ind}, we will use $\ann{t}$ to denote the annihilator ideal associated to the module $M(t)$.   

\begin{lemma}\label{Lem: paths arrows}
Let $t$ be a partial asymptotic triangulation of $(\mathrm{S},\mathrm{M})$ and let $p = p_n\dots p_0$ be a path of length $n+1$ in $(Q, I(\Gamma))$.  Then $p \in \ann{t}$ if and only if $p_i \in \ann{t}$ for some $i\in \{0,\dots,n\}$.
\end{lemma}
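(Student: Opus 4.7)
The plan is as follows. The direction $(\Leftarrow)$ is immediate: $\ann{t}$ is a two-sided ideal of $A(\Gamma)$, so if $p_i \in \ann{t}$ then $p = p_n \cdots p_0 \in \ann{t}$. For $(\Rightarrow)$ I will argue the contrapositive: assuming $p_j \notin \ann{t}$ for every $j$, I will exhibit an arc in $\mathcal{N}_t$ whose string contains $p$ or $p^{-1}$ as a substring.

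The key geometric fact I will use is that a direct path in the quiver of a triangulated surface represents a fan at a single marked point. Since $p$ is a path, each product $p_{j+1} p_j$ avoids $I(\Gamma)$, which forces the triangles $T_j$ and $T_{j+1}$ containing $p_j \colon k_j \to k_{j+1}$ and $p_{j+1}$ to be distinct and to share only $\gamma_{k_{j+1}}$. A short orientation calculation shows that the common vertex of $\gamma_{k_j}, \gamma_{k_{j+1}}$ in $T_j$ coincides with the common vertex of $\gamma_{k_{j+1}}, \gamma_{k_{j+2}}$ in $T_{j+1}$; iterating, all arrows $p_0, \ldots, p_n$ turn at a common marked point $m \in \mathrm{M}$, and $\gamma_{k_0}, \ldots, \gamma_{k_{n+1}}$ are consecutive arcs of the angular fan at $m$, with $T_j$ the sector between $\gamma_{k_j}$ and $\gamma_{k_{j+1}}$. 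By Lemma~\ref{Lem: ann-of-ind}, each hypothesis $p_j \notin \ann{t}$ produces an arc $\alpha^{(j)} \in \mathcal{N}_t$ whose string contains $p_j$ or $p_j^{-1}$; geometrically, $\alpha^{(j)}$ has a chord in sector $T_j$ crossing both $\gamma_{k_j}$ and $\gamma_{k_{j+1}}$.

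Writing $\mathcal{A}_j := \{\alpha \in \mathcal{N}_t : u_\alpha \text{ contains } p_j \text{ or } p_j^{-1}\}$, the hypothesis gives $\mathcal{A}_j \neq \emptyset$ for all $j$, and the aim is $\bigcap_{j=0}^n \mathcal{A}_j \neq \emptyset$. The heart of the argument is a local geometric principle: if $\alpha \in \mathcal{A}_j$ and $\alpha' \in \mathcal{A}_{j+1}$ are distinct non-crossing arcs in $\mathcal{N}_t$, then at least one of them also lies in $\mathcal{A}_j \cap \mathcal{A}_{j+1}$. The idea is that if neither arc extends across the shared edge $\gamma_{k_{j+1}}$ into both sectors, then $\alpha$ in $T_{j+1}$ exits through the third side $\gamma_{x_{j+1}}$ or terminates at a vertex, and similarly $\alpha'$ in $T_j$; in local coordinates one checks that these two non-extending chords are forced to intersect, contradicting non-crossing. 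Iterating the principle on the tuple $(\alpha^{(0)}, \ldots, \alpha^{(n)})$ (whose entries are pairwise non-crossing) yields, after finitely many mergers, a single arc in $\bigcap_j \mathcal{A}_j$. Since this arc is simple and every letter $p_0, \ldots, p_n$ appears in $u_\alpha$ from a single pass near $m$, these letters must appear as a single consecutive substring $p_n \cdots p_0$ (or its inverse), yielding $p \notin \ann{t}$.

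The hard part will be the rigorous justification of the local geometric principle. Although geometrically plausible, it requires placing explicit local coordinates in the adjacent triangular sectors $T_j$ and $T_{j+1}$ and performing case analysis on the possible start and endpoints of the two chords (interior of an edge versus a marked vertex of a triangle), checking in every non-extending configuration that the two chords are forced to cross.
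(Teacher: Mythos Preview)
Your geometric intuition---that a path in $(Q, I(\Gamma))$ corresponds to a fan of triangles at a single marked point $m$, and that non-crossing arcs passing through adjacent sectors are forced to extend compatibly---is exactly the content underlying the paper's argument. But your packaging of this idea is considerably more complicated than necessary, and the iteration step has a genuine gap.

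The paper argues by induction on the length of $p$, in the forward direction rather than the contrapositive. Assume $p \in \ann{t}$ and $p_0 \notin \ann{t}$; then by Lemma~\ref{Lem: ann-of-ind} there is an arc $\alpha$ in $T$ (or $\alpha = \beta$ in the strictly asymptotic case) whose string contains $p_0$ or $p_0^{-1}$. The single geometric observation needed is that any arc $\gamma$ whose string contains $p_n\cdots p_1$ or its inverse, but not $p$ or $p^{-1}$, must cross $\alpha$---this is precisely your ``local principle'' applied once, at the junction between $p_0$ and $p_1$. Hence no such $\gamma$ lies in $T$, and since no $\gamma$ in $T$ contains $p$ either (as $p\in\ann{t}$), we get $p_n\cdots p_1 \in \ann{t}$. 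Induction on length finishes.

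The gap in your approach is the iteration. Your principle says that from non-crossing $\alpha \in \mathcal{A}_j$ and $\alpha' \in \mathcal{A}_{j+1}$, one of them lies in $\mathcal{A}_j \cap \mathcal{A}_{j+1}$. But repeating this does not obviously yield an arc in $\bigcap_j \mathcal{A}_j$: having found some $\alpha \in \mathcal{A}_0 \cap \mathcal{A}_1$, applying the principle to the pair $(\alpha, \alpha^{(2)})$ only guarantees that one of them lies in $\mathcal{A}_1 \cap \mathcal{A}_2$; if that one is $\alpha^{(2)}$, you have not advanced toward $\mathcal{A}_0$, and there is no evident monovariant forcing termination. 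Your phrase ``after finitely many mergers'' hides this. You also owe a separate argument that the letters $p_0,\ldots,p_n$ occur \emph{consecutively} in the resulting $u_\alpha$ (membership in $\bigcap_j \mathcal{A}_j$ alone does not give this), which the inductive framing gets for free.

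The clean fix is exactly the paper's: use your local principle once, together with induction on the length of $p$. Then the case analysis you anticipate for the local principle is the only geometric work required.
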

\begin{proof}
By definition, either $t= T$ or $t = (T,P_1, P_2)$.  Suppose $p$ is contained in $\ann{t}$ but $p_0$ is not.  By Lemma \ref{Lem: ann-of-ind} there is some $\alpha \in T$ such that $u_\alpha$ contains $p_0$ or $p_0^{-1}$ as a substring or $T$ is strictly asymptotic and $u_\beta$ contains $p_0$ or $p_0^{-1}$ as a substring.  Any arc $\gamma$ such that $u_\gamma$ contains $p_n \dots p_1$ or $p_1^{-1}\dots p_n^{-1}$ as a substring but does not contain $p$ or $p^{-1}$ as a substring will necessarily cross $\alpha$ and so cannot be contained in $T$.  Thus $p_n \dots p_1 \in \ann{t}$ by Lemma \ref{Lem: ann-of-ind}.  By induction on the length of $p$ we have that $p \in \ann{t}$ implies $p_i \in \ann{t}$ for some $0 \leq i \leq n$.  The converse is immediate.
\end{proof}

\begin{corollary}\label{cor: ann is gentle}
The algebra $A(\Gamma)/\ann{t}$ is a gentle algebra.
\end{corollary}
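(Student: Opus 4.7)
The plan is to combine Lemma~\ref{Lem: paths arrows} with the observation that quotienting a gentle algebra by an ideal generated by arrows again yields a gentle algebra.

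First I would set $\mathcal{A} := \ann{t} \cap Q_1$, the set of arrows of $Q$ annihilating $M(t)$. By Lemma~\ref{Lem: ann-of-ind}, $\ann{t}$ has a $\k$-basis consisting of paths, and Lemma~\ref{Lem: paths arrows} then says that every such basis path contains some arrow of $\mathcal{A}$. Consequently $\ann{t}$ is generated as a two-sided ideal of $A(\Gamma)$ by $\mathcal{A}$, so
\[
A(\Gamma)/\ann{t} \;\cong\; \k Q'/I',
\]
where $Q'$ is the subquiver of $Q$ with the same vertex set $Q_0$ and arrow set $Q_1 \setminus \mathcal{A}$, and $I'$ is the ideal of $\k Q'$ generated by the length-two paths of $I(\Gamma)$ both of whose arrows avoid $\mathcal{A}$ (here one uses that $I(\Gamma)$ is generated in degree two, so its image under the projection $\k Q \to \k Q'$ is generated by the images of its length-two generators).

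I would then verify the gentle axioms for $(Q',I')$. Since $A(\Gamma)$ is gentle: at most two arrows start and at most two end at each vertex of $Q$, a property inherited by $Q'$; the ideal $I'$ is generated by paths of length two by construction; for every $a\in Q'_1$ and every $b\in Q'_1$ with $s(b)=t(a)$ one has $ba\in I'$ if and only if $ba\in I(\Gamma)$, so the bounds ``at most one such $b$ with $ba\in I(\Gamma)$ and at most one with $ba\notin I(\Gamma)$'' descend from $(Q,I(\Gamma))$ to $(Q',I')$; the dual conditions on arrows $c\in Q'_1$ with $t(c)=s(a)$ are checked symmetrically.

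I do not foresee a real obstacle: the substance of the argument is packaged in Lemma~\ref{Lem: paths arrows}, which identifies $\ann{t}$ as an arrow ideal, and each of the remaining gentle axioms is monotone under the operation of deleting arrows. If any subtlety arises, it is only in confirming that no new length-two relations are accidentally introduced in $I'$, which follows immediately from the fact that $\k Q \to \k Q'$ is a morphism of graded algebras sending the length-two generators of $I(\Gamma)$ to either themselves or to zero.
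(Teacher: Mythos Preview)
Your approach is essentially the same as the paper's, and the verification of the gentle axioms is more thorough than what the paper writes. However, there is one oversight: Lemma~\ref{Lem: paths arrows} only applies to paths of length at least one, so it does \emph{not} tell you that every basis path of $\ann{t}$ contains an arrow of $\mathcal{A}$. Trivial paths $e_i$ can lie in $\ann{t}$ (this happens precisely when $\gamma_i$ is not crossed by any arc of $T$; see Lemma~\ref{Lem: add arcs}(1) or Lemma~\ref{Lem: vert-trans}(2)). In that case $\ann{t}$ is not generated by $\mathcal{A}$ alone, and your proposed isomorphism $A(\Gamma)/\ann{t}\cong \k Q'/I'$ is false: in $\k Q'/I'$ the idempotent $e_i$ survives as a nonzero element, whereas in $A(\Gamma)/\ann{t}$ it is zero.

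The fix is immediate: take your generating set to be $\mathcal{A}$ together with the idempotents $e_i\in\ann{t}$, and let $Q'$ be obtained from $Q$ by deleting both the arrows in $\mathcal{A}$ and these vertices (along with their incident arrows). This is exactly what the paper does when it says the quotient is obtained ``by removing arrows and vertices''. Your check that deleting arrows preserves the gentle axioms then goes through verbatim, since deleting a vertex is a special case.
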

\begin{proof}
By Lemma \ref{Lem: paths arrows}, the algebra $A(\Gamma)/\ann{t}$ is obtained from $A(\Gamma)$ by removing arrows and vertices and so the axioms of a gentle algebra will still be satisfied by $A(\Gamma)/\ann{t}$.  
\end{proof}
 
Next we consider the annihilator of an asymptotic triangulation $t$.  In this case we can describe the annihilator in more detail.  In particular, $\ann{t}$ will contain arrows satisfying the following definition with respect to some $\alpha \in T$.

\begin{definition}
Let $a$ be an arrow in $Q$ with $s(a) = i$ and $t(a) = j$.  We say that an arc $\alpha$ \textbf{crosses 
$a$ in a 3-cycle} if $i$ and $j$ are edges of an internal triangle in $\Gamma$ and either $s(u_\alpha) = k$ 
or $t(u_\alpha) =k$ where $k$ is the remaining edge in the triangle, see figure below. 
\[
\includegraphics[width=3cm]{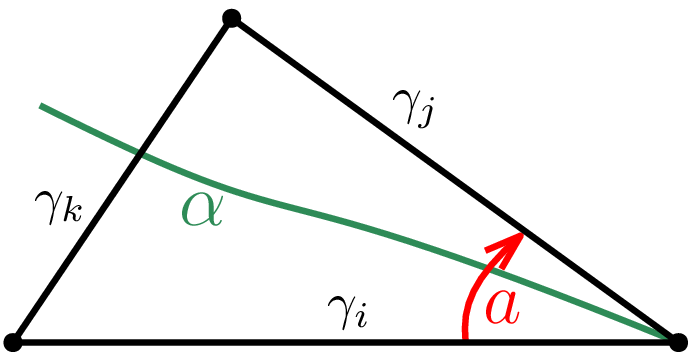}
\]
\end{definition}
Note that this means that $\alpha$ and the arc corresponding to $a$ cross in a 3-cycle, 
cf. Definition~\ref{def:cross-in-3-cycle}. \\

\begin{lemma}\label{Lem: vert-trans} 
Let $t$ be an asymptotic triangulation with corresponding $T$. Then the following statements hold.
\begin{enumerate}
\item Let $a:i\to j$ be an arrow in $Q$. 
Then $a$ is contained in $\ann{t}$ if and only if one of the following conditions hold. 
\begin{enumerate}
\item One of $\gamma_i$ or $\gamma_j$ is contained in $T$.
\item There exists some $\alpha \in T$ such that $\alpha$ crosses $a$ in a 3-cycle.
\end{enumerate}
\item Let $i$ be a vertex in $Q$.  Then $e_i$ is contained in $\ann{t}$ if and only if $\gamma_i\in T\cap\Gamma$.
\end{enumerate}
\end{lemma}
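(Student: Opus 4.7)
The proof hinges on Lemma~\ref{Lem: ann-of-ind}, which gives $\ann{t}$ a $\k$-basis of paths $p$ such that neither $p$ nor $p^{-1}$ occurs as a substring of $u_\alpha$ for any $\alpha\in T$ with $M(\alpha)\neq 0$, and (when $t$ is strictly asymptotic) not in $u_\beta$ either. My strategy is to translate membership of $e_i$ and $a$ in $\ann{t}$ into geometric conditions about arc crossings and the ``gateways'' of the triangle $\triangle$ in $\Gamma$ containing $\gamma_i$ and $\gamma_j$, and then to exploit non-crossing together with the maximality of $T$.

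For (2) the translation is clean: $e_i$ occurs in $u_\alpha$ precisely when $\alpha$ crosses $\gamma_i$, and $e_i$ occurs in $u_\beta$ precisely when $\gamma_i$ is bridging, since $\beta$ crosses exactly the bridging arcs. For $(\Leftarrow)$, non-crossing guarantees that no $\alpha\in T$ crosses $\gamma_i$ when $\gamma_i\in T$; if $t$ is strictly asymptotic then an asymptotic arc in $T$ cannot cross $\gamma_i$, forcing $\gamma_i$ to be peripheral so that $\beta$ too avoids $\gamma_i$. For $(\Rightarrow)$, arcs in $\Gamma$ never cross each other, so combined with the hypothesis we obtain that $\gamma_i$ crosses no arc of $T$; maximality then forces $\gamma_i\in T$, and one checks that adjoining $\gamma_i$ preserves the strictly-asymptotic status in either case.

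For (1) I read ``$a$ or $a^{-1}$ appears in $u_\alpha$'' as ``$\alpha$ has a segment in $\triangle$ joining $\gamma_i$ and $\gamma_j$''. For $(\Leftarrow)$ case (a) mimics (2); for case (b), an arc $\alpha\in T$ with $s(u_\alpha)=k$ or $t(u_\alpha)=k$ has its first or last segment inside $\triangle$ joining the corner $A$ opposite $\gamma_k$ to a point of $\gamma_k$, and since this segment separates $A$ from $\gamma_k$ in $\triangle$, any segment from $\gamma_i$ to $\gamma_j$ in $\triangle$ must cross it, forcing the offending arc to cross $\alpha$ and contradicting the non-crossing of $T$. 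In the strictly asymptotic case one rules out $a\in u_\beta$ by applying the same separation argument to an asymptotic arc $\alpha'\in T$: the $\mathbb{N}$-string $u_{\alpha'}$ has maximal periodic part equal to the band $b$ underlying $u_\beta$, so if $a$ appeared in $u_\beta$ it would also appear in $u_{\alpha'}$, which we have just ruled out.

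For the $(\Rightarrow)$ direction of (1), assume $a\in\ann{t}$ and neither $\gamma_i$ nor $\gamma_j$ lies in $T$; by maximality arcs of $T$ cross both $\gamma_i$ and $\gamma_j$, but none can use the $\gamma_i$-$\gamma_j$ gateway. Hence each such arc entering $\triangle$ through $\gamma_i$ either exits through $\gamma_k$ (possible only when $\triangle$ is internal) or ends at a corner of $\triangle$, and analogously for $\gamma_j$. When $\triangle$ has a boundary side (so (b) is vacuous), a minimality argument shows an arc crossing $\gamma_i$ without contributing $a$ must end at the corner opposite $\gamma_i$, and similarly for $\gamma_j$; the two configurations would cross each other inside $\triangle$, so maximality collapses to $\gamma_i$ or $\gamma_j$ lying in $T$, giving (a). When $\triangle$ is internal, the ``missing'' arc from $A$ through $\triangle$ across $\gamma_k$ cannot be added to $T$ without crossing some existing arc, and a geometric analysis of where the crossing can occur inside $\triangle$ forces some $\alpha\in T$ to itself have endpoint at $A$ with first or last crossing $\gamma_k$, realizing (b). The main obstacle is this $(\Rightarrow)$ case analysis: the combinatorial condition $a\in\ann{t}$ permits several routings around $\triangle$, and eliminating all non-(a)/non-(b) possibilities by non-crossing is the delicate step.
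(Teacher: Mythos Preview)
Your treatment of part (2) and of the $(\Leftarrow)$ direction of part (1) is correct and matches the paper's argument; your explicit handling of $u_\beta$ via the periodic part of an asymptotic arc in $T$ is a welcome elaboration of a step the paper leaves implicit. The boundary-triangle case of $(\Rightarrow)$ in part (1) is also fine and parallels the paper.

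The gap is in the internal case of $(\Rightarrow)$. You write of ``the missing arc from $A$ through $\triangle$ across $\gamma_k$'' and then analyse where it could cross an arc of $T$ \emph{inside} $\triangle$. But this arc is not determined by its initial segment: after crossing $\gamma_k$ it may continue in infinitely many ways, and whether it crosses some member of $T$ depends entirely on that continuation, which typically lies outside $\triangle$. Even if a crossing did occur inside $\triangle$, nothing forces the crossing arc to have an endpoint at $A$ --- it could, for example, simply pass from $\gamma_i$ to $\gamma_k$. So the concluding step ``forces some $\alpha\in T$ to itself have endpoint at $A$'' does not follow.

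The paper repairs this by \emph{specifying} the continuation. Having already produced $\alpha\in T$ whose segment in $\triangle$ runs from $\gamma_i$ to $\gamma_k$ (this is precisely the subcase you have isolated), one constructs $\gamma$ to start at $A=\gamma_i\cap\gamma_j$, cross $\gamma_k$ immediately adjacent to where $\alpha$ does, and thereafter coincide with $\alpha$ to its endpoint. Outside $\triangle$ the arc $\gamma$ agrees with $\alpha\in T$, and inside $\triangle$ its segment can be taken close to the concatenation of a piece of $\gamma_i$ with the segment of $\alpha$; hence $\gamma$ is compatible with every arc of $T$. Maximality then forces $\gamma\in T$, and since $s(u_\gamma)=k$ this realises condition (b). This explicit ``follow $\alpha$'' construction is the missing ingredient in your argument.
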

\begin{proof}
\textbf{(1)} If either of the conditions hold then, it follows from Lemma \ref{Lem: ann-of-ind}, that $a \in \ann{t}$.  We will therefore show the converse implication.

So let $a \in \ann{t}$.  First suppose that $\gamma_i$ and $\gamma_j$ are edges of a triangle such 
that the remaining edge is a boundary component and suppose $\gamma_i$ is not contained in $T$.  
As $T$ is a triangulation, there must be some arc $\alpha \in T$ that crosses $\gamma_i$.  
By Lemma \ref{Lem: ann-of-ind}, neither $a$ nor $a^{-1}$ occur as a substring of $u_\alpha$ and so 
$s(u_\alpha) = i$ or $t(u_\alpha) = i$.  Any arc $\gamma$ that crosses $\gamma_j$ such that $u_\gamma$ does 
not contain $a$ or $a^{-1}$ as a substring will cross $\alpha$ and so cannot be contained in $T$. 
So $\gamma_j$ is compatible with all arcs in $T$ and we must 
therefore have that $\gamma_j \in T$.  

Next suppose that $\gamma_i$ and $\gamma_j$ are edges of an internal triangle in $\Gamma$ with remaining 
edge $\gamma_k$.  If $\gamma_i$ is not contained in $T$, then there exists an arc $\alpha \in T$ that crosses 
$\gamma_i$.  Since $u_\alpha$ cannot contain $a$ or $a^{-1}$ as a substring, either $s(u_\alpha) = i$ or 
$t(u_\alpha) = i$ or $\alpha$ crosses $\gamma_k$ immediately after $\gamma_i$.  In the first situation, it follows 
that $\gamma_j$ must be contained in $\ann{t}$. In the second situation, there must be an arc $\gamma$ in $T$ 
that crosses $a$ in a 3-cycle and then agrees with $\alpha$ until its endpoint.  As before, this arc is compatible with all of $T$ and so is in $T$. 

\textbf{(2)}  
The statement follows immediately from the fact that $\gamma_i$ is crossed by an arc in $T$ if and only if 
$\gamma_i$ is not contained in $T$.  
\end{proof}

\noindent We now associate to any asymptotic triangulation $t$ a new quiver $Q_t$ as follows: 
\begin{enumerate} 
\item[I.] 
If $\gamma_i$ is contained in $\Gamma \cap T$, remove the  vertex $i$ from $Q$ and all 
arrows incident with $i$. 
\item[II.] 
If $a$ is an arrow in $Q$ that is crossed in a 3-cycle by some $\alpha \in T$, then remove $a$ from $Q$. 
\end{enumerate}
Let $I_t$ be the admissible ideal in $\k Q_t$
generated by the generating relations in $I(\Gamma)$ that are supported on 
$Q_t$.

\begin{corollary}\label{Cor: new-quiver}
Let $t$ be an asymptotic triangulation with quiver $Q_t$ and ideal $I_t$. 
Then $A(\Gamma)/ \ann{t}$ is isomorphic to the algebra $\k Q_t/I_t$. 
\end{corollary}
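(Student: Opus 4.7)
The plan is to construct an explicit surjective algebra morphism $\phi \colon A(\Gamma) \to \k Q_t/I_t$ whose kernel is exactly $\ann{t}$, and to show both containments using the structural lemmas we have just proved.

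First I would define $\phi$ on the presentation $A(\Gamma) = \k Q/I(\Gamma)$ by sending each idempotent $e_i$ and each arrow $a$ to its image in $\k Q_t$ if it survives in $Q_t$, and to zero otherwise. To verify that this descends to $A(\Gamma)$, note that every generating relation of $I(\Gamma)$ is a path of length two supported on an internal triangle of $\Gamma$; either such a path becomes zero because one of its arrows was deleted in step II (or sits at a deleted vertex from step I), or else both arrows survive and the relation is, by definition, a generator of $I_t$. So $\phi$ is a well-defined algebra homomorphism. Surjectivity is immediate, since the generators of $\k Q_t/I_t$ are all in the image.

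The key step is to identify the kernel with $\ann{t}$. For the inclusion $\ann{t} \subseteq \ker \phi$, observe that $\ann{t}$ is spanned by paths (including trivial ones) in $(Q,I(\Gamma))$, by Lemma \ref{Lem: ann-of-ind}; Lemma \ref{Lem: paths arrows} then reduces the question to arrows and trivial paths. If $e_i \in \ann{t}$, Lemma \ref{Lem: vert-trans}(2) says $\gamma_i \in T \cap \Gamma$, so vertex $i$ was removed in step I and $\phi(e_i)=0$. If an arrow $a \colon i \to j$ lies in $\ann{t}$ but neither $\gamma_i$ nor $\gamma_j$ belongs to $T$, then Lemma \ref{Lem: vert-trans}(1) provides some $\alpha \in T$ crossing $a$ in a 3-cycle, so $a$ was removed in step II; in either case $\phi(a) = 0$. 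Since any longer path in $\ann{t}$ factors through a removed arrow or vertex by Lemma \ref{Lem: paths arrows}, $\phi$ kills all of $\ann{t}$.

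For the reverse inclusion $\ker \phi \subseteq \ann{t}$, the kernel is generated as a two-sided ideal by the removed vertex idempotents and arrows. By the easy direction of Lemma \ref{Lem: vert-trans}, each of these generators lies in $\ann{t}$, and since $\ann{t}$ is an ideal the containment follows. Thus $\phi$ induces the desired isomorphism $A(\Gamma)/\ann{t} \cong \k Q_t/I_t$. The main subtlety, rather than an obstacle, is simply to be careful that the ``length-one'' case of Lemma \ref{Lem: paths arrows} is dealt with via Lemma \ref{Lem: vert-trans}(2) on trivial paths, since Lemma \ref{Lem: paths arrows} as stated concerns paths of length $\geq 1$.
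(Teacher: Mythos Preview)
Your argument is correct and is essentially the explicit version of what the paper leaves implicit: the paper states the result without proof, relying on Corollary~\ref{cor: ann is gentle} (which already says $A(\Gamma)/\ann{t}$ is obtained by deleting arrows and vertices, via Lemma~\ref{Lem: paths arrows}) together with Lemma~\ref{Lem: vert-trans} to identify precisely which arrows and vertices are deleted. Your construction of the surjection $\phi$ and the two kernel inclusions just make this explicit; the only minor point is that when you say ``length-one'' in the last sentence you mean the trivial (length-zero) paths, but your handling of them via Lemma~\ref{Lem: vert-trans}(2) is exactly right.
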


%
\subsubsection{Cosilting modules are partial asymptotic triangulations.}

Recall from Lemma \ref{Lem: no sup dec} and Proposition \ref{Prop: Cosilt is unfaithful cotilt} that every cosilting module $C$ is equivalent to a 
cosilting module with no superdecomposable part.  As in Remark \ref{Rem: ess unique}, we will use the notation 
$\mathcal{N}_C$ to denote the set of indecomposable pure-injective summands of $C$ up to isomorphism.  

\begin{proposition}\label{Prop: cosilt are partial}
Let $C$ be a cosilting module over $A(\Gamma)$.  There exists a partial asymptotic triangulation 
$t$ such that $\Prod{\mathcal{N}_C} = \Prod{\mathcal{N}_t}$. 
\end{proposition}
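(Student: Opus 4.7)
My plan is to apply Corollary~\ref{cor: max rig cosilt} and analyse the maximal rigid system $\mathcal{N}_C$ in $\Mod{\bar{A}}$, where $\bar{A} := A(\Gamma)/\ann{C}$; by Corollary~\ref{Cor: def cosilt} and Lemma~\ref{Lem: no sup dec} I may replace $C$ with an equivalent representative of the form $\PI(\bigoplus_{N \in \mathcal{N}_C} N)$.

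\emph{Step 1.} By Theorem~\ref{Thm: ind pinj}, every $N \in \mathcal{N}_C$ is one of: a string module $M(\alpha)$ for a finite or asymptotic arc $\alpha \notin \Gamma$; a band module $M(\lambda, n)$ with $\lambda \in \k^*$ and $n \in \mathbb{N} \cup \{\infty, -\infty\}$; or the generic module $G$. I define $T := \{\alpha : M(\alpha) \in \mathcal{N}_C\}$, $P_1 := \{\lambda : M(\lambda, \infty) \in \mathcal{N}_C\}$, $P_2 := \{\lambda : M(\lambda, -\infty) \in \mathcal{N}_C\}$.

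\emph{Step 2.} I would use the proposition following Theorem~\ref{Thm: extensions} to rule out $M(\lambda, n) \in \mathcal{N}_C$ for $n \in \mathbb{N}$ (self-Ext nonzero) and to establish $P_1 \cap P_2 = \emptyset$ (the $\Ext^1$ from a Pr\"ufer to an adic at the same $\lambda$ is nonzero). By Remark~\ref{rem:ideal-image}, these $\Ext^1$ groups are computed over the hereditary algebra $B \cong \k \tilde{A}_{s-1}$, and the relevant nonzero classes are realised by extensions whose middle terms are band modules (hence annihilated by the same ideal as the original summands), so they descend to nonzero classes in $\Ext^1_{\bar A}$, contradicting rigidity of $\mathcal{N}_C$.

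\emph{Step 3.} To show that arcs in $T$ are pairwise non-crossing outside $3$-cycles (and that $\beta$ does not cross any $\alpha \in T$ outside a $3$-cycle when a band or generic summand is present), I would argue contrapositively: a crossing outside a $3$-cycle produces a standard non-split extension by Proposition~\ref{prop:list-extensions}. The key check is that the middle term of such an extension is again an $\bar{A}$-module, so that it represents a nonzero class in $\Ext^1_{\bar{A}}$ and contradicts rigidity. For overlap extensions (cases 1--3) this is immediate because the middle terms are built only from letters already appearing in $u_\alpha$, $u_\delta$, and (in the mixed cases) the band $b$, all of which lie outside $\ann{C}$ by construction. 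For arrow extensions (case 4) one must verify that the connecting arrow $a$ is not in $\ann{C}$; this follows from a combinatorial argument using Observation~\ref{Obs: ann} and Lemma~\ref{Lem: ann-of-ind}, exploiting the geometric configuration placing $\alpha$ and $\delta$ on opposite sides of the triangle carrying $a$ at a common marked endpoint.

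\emph{Step 4.} Finally, to match one of the two cases in the definition of a partial asymptotic triangulation, I would argue that if any of $G$, an infinite-dimensional band module, or an asymptotic-arc module lies in $\mathcal{N}_C$, then in fact $G \in \mathcal{N}_C$ and $T$ contains an asymptotic arc: otherwise maximality of $\mathcal{N}_C$ would be violated, since one could adjoin $G$ or an asymptotic arc without breaking the rigidity conditions verified in Steps 2--3. This puts us in case (i) with $t := (T, P_1, P_2)$. Otherwise $\mathcal{N}_C$ consists purely of finite-arc string modules and case (ii) applies with $t := T$. In either case $\Prod{\mathcal{N}_C} = \Prod{\mathcal{N}_t}$ by Remark~\ref{Rem: Prod = Prod}.

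The main obstacle is Step 3, specifically the arrow-extension case: a careful geometric-combinatorial argument is required to guarantee that the connecting arrow lies outside $\ann{C}$, so that the $A(\Gamma)$-extension obtained from Proposition~\ref{prop:list-extensions} actually descends to $\bar{A}$. A secondary subtlety is in Step 4: the maximality argument forcing the presence of $G$ and an asymptotic arc in $\mathcal{N}_C$ whenever any band or asymptotic component is already present.
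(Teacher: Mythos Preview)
Your outline misses the heart of the argument: you never rule out crossings \emph{in} a 3-cycle. A partial asymptotic triangulation requires $T$ to be a collection of \emph{non-crossing} arcs, full stop, whereas your Step~3 only establishes that arcs in $T$ do not cross \emph{outside} of 3-cycles. Since Theorem~\ref{Thm: extensions} says precisely that $\Ext$-orthogonality over $A(\Gamma)$ is equivalent to ``no crossings outside 3-cycles'', rigidity alone cannot exclude 3-cycle crossings; a genuinely different mechanism is needed. In the paper this is the most delicate step: one first uses the condition $\id{\bar A}{M(\delta)}\le 1$ from the rigid-system axioms together with Lemma~\ref{Lem: id-gentle} to force a particular arrow $c$ of the 3-cycle into $\ann{C}$, and then exploits that $\Cogen{C}$ is a torsion-free class in $\Mod{A(\Gamma)}$ (hence extension-closed) by constructing an auxiliary submodule $M(\varepsilon)$ of $M(\alpha)$ and an arrow extension $0\to M(\delta)\to M(\nu)\to M(\varepsilon)\to 0$ whose middle term has $c$ in its support, so $M(\nu)\notin\Mod{\bar A}\supseteq\Cogen{C}$ --- a contradiction. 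None of this appears in your plan.

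Two further comments. First, your Step~3 strategy of checking that middle terms of standard extensions lie in $\Mod{\bar A}$ is more work than necessary: because $\Cogen{C}$ is extension-closed in $\Mod{A(\Gamma)}$ and contained in $\Mod{\bar A}$, one gets directly that $\Ext^1_{\bar A}(M(\alpha),M(\delta))=0$ iff $\Ext^1_{A(\Gamma)}(M(\alpha),M(\delta))=0$ for $M(\alpha),M(\delta)\in\mathcal N_C$, which makes your worry about the connecting arrow in the arrow-extension case disappear. Second, your Step~4 overcomplicates matters: the paper simply observes that $\mathcal N_C\subseteq\mathcal N_t$ (the only possible discrepancy being whether $G\in\mathcal N_C$), notes that $\mathcal N_t$ is again rigid, and invokes maximality of $\mathcal N_C$ to get $\Prod{\mathcal N_C}=\Prod{\mathcal N_t}$; there is no need to argue that $G$ or an asymptotic arc must already lie in $\mathcal N_C$.
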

\begin{proof}
By Theorem \ref{Thm: cotilt vs rigid} and Proposition \ref{Prop: Cosilt is unfaithful cotilt}, the set $\mathcal{N}_C$ is a maximal rigid system in the category $\Mod{A(\Gamma)/\ann{C}}$.  We consider $\Mod{A(\Gamma)/\ann{C}}$ as a full subcategory of $\Mod{A(\Gamma)}$.  

Let $T := \{ \alpha \mid M(\alpha) \in \mathcal{N}_C\}$.  We will show that $T$ is a collection of non-crossing arcs in $(\mathrm{S},\mathrm{M})$.  Let $\alpha$ and $\delta$ be contained in $T$.  By Proposition \ref{Prop: Cosilt is unfaithful cotilt}, we have that $\Cogen{C}\subseteq \Mod{A(\Gamma)/\ann{C}}$ is closed under extensions. It follows that $\Ext_{A(\Gamma)/\ann{C}}^1(M(\alpha), M(\delta)) = 0$ if and only if $\Ext_{A(\Gamma)}^1(M(\alpha), M(\delta)) = 0$.  Similarly $\Ext_{A(\Gamma)/\ann{C}}^1(M(\delta), M(\alpha))=0$ if and only if $\Ext_{A(\Gamma)}^1(M(\delta), M(\alpha)) = 0$.  By Theorem \ref{Thm: extensions} the arcs $\alpha$ and $\delta$ cannot cross outside of 3-cycles or else this would contradict that $\mathcal{N}_C$ is a maximal rigid system in $\Mod{A(\Gamma)/\ann{C}}$.

Suppose that $\alpha$ and $\delta$ cross in a 3-cycle.  Without loss of generality, 
suppose we have the following local configuration. 
\[
\includegraphics[width=2.5cm]{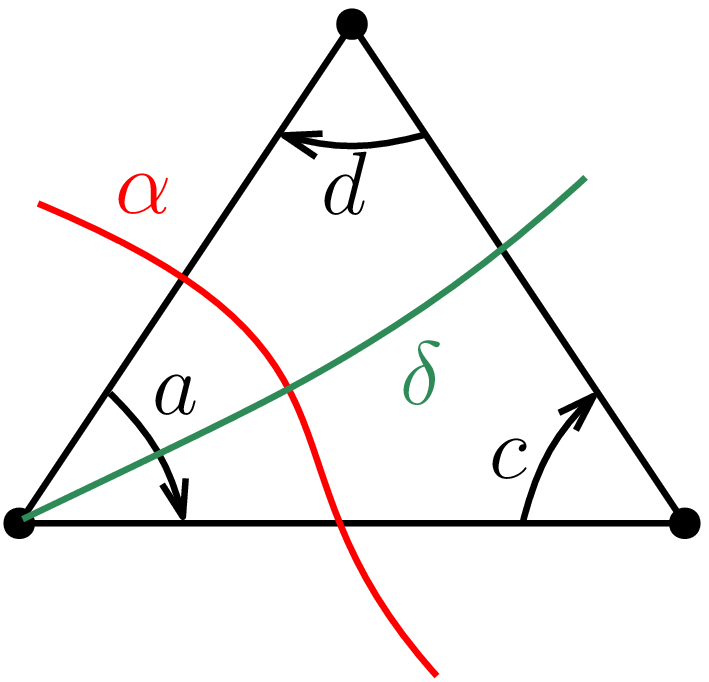}
\]
Then we must have that $c \in \ann{C}$ because otherwise $\id{A(\Gamma)/\ann{C}}{M(\delta)} > 1$ by Lemma \ref{Lem: id-gentle}, which contradicts the fact that $\mathcal{N}_C$ is a maximal rigid system in $\Mod{A(\Gamma)/\ann{C}}$.  Let $\varepsilon$ be the arc that crosses $d$ in a 3-cycle and follows $\alpha$ until its endpoint (if possible) or until 
$\varepsilon$ meets an inverse letter.  That is, we have the following local configuration.
\[
\includegraphics[width=6cm]{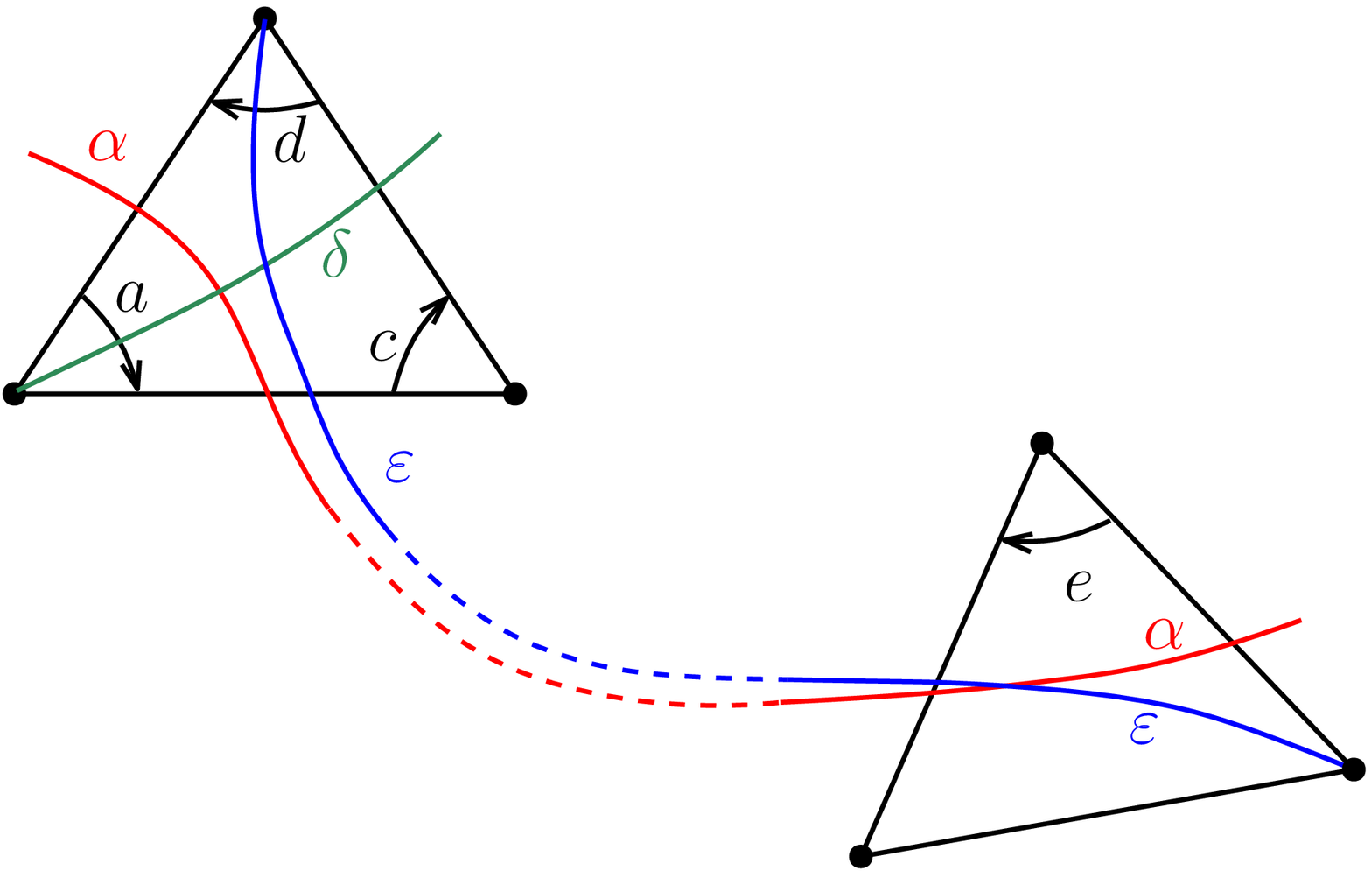}
\]

Then $M(\varepsilon)$ is a finite-dimensional submodule of $M(\alpha)$ and hence $M(\varepsilon)\in \Cogen{C}$.  However, there is an arrow extension $0 \to M(\delta) \to M(\nu) \to M(\varepsilon) \to 0$ where $u_\nu = u_\delta c u_\varepsilon$ in $\Mod{A(\Gamma)}$ and $M(\nu)$ is not contained in $\Cogen{C} \subseteq \Mod{A(\Gamma)/\ann{C}}$ because $c\in \ann{C}$.  This contradicts that $\Cogen{C}$ is closed under extensions and so $\alpha$ and $\delta$ do not cross.

If $T$ is strictly asymptotic, then define $t:= (T, P_1, P_2)$ where $P_1 := \{ \lambda \in \k^*\mid M(\lambda, \infty) \in \mathcal{N}_C\}$ and $P_2 := \{ \lambda \in \k^*\mid M(\lambda, -\infty) \in \mathcal{N}_C\}$.  Note that $P_1 \cap P_2 = \emptyset$ by Theorem \ref{Thm: extensions} and the same reasoning as above.  If $T$ is not strictly asymptotic, then define $t := T$.  We have shown that $t$ is a partial asymptotic triangulation.  Clearly $\mathcal{N}_C$ is a rigid system in $\Mod{A(\Gamma)/\ann{C}}$ and $\mathcal{N}_C \subseteq \mathcal{N}_t$ (with a proper inclusion exactly when $G$ is not contained in $\mathcal{N}_C$).  Since $\mathcal{N}_C$ is maximal, we may conclude that $\Prod{\mathcal{N}_C} = \Prod{\mathcal{N}_t}$.
\end{proof}

%
\subsubsection{Asymptotic triangulations are cosilting modules.}

Let $t$ be an asymptotic triangulation of $(\mathrm{S},\mathrm{M})$.  In this section we show that $M(t)$ is a cosilting module over $A(\Gamma)$.  By Corollary \ref{cor: max rig cosilt}, it is enough to show that $\mathcal{N}_t$ is a maximal rigid system in $\Mod{A(\Gamma)/\ann{t}}$ and that $\Cogen{M(t)}$ is a torsion-free class.

\begin{proposition}\label{Prop: asym max rig}
Let $t$ be an asymptotic triangulation.  The set $\mathcal{N}_t$ is a maximal rigid system in $\Mod{A(\Gamma)/\ann{t}}$ and $M(t)$ is a cotilting $A(\Gamma)/\ann{t}$-module. 
\end{proposition}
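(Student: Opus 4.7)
The plan is to verify that $\mathcal{N}_t$ is a maximal rigid system in $\Mod{A'}$, where $A' := A(\Gamma)/\ann{t}$; by Theorem \ref{Thm: cotilt vs rigid} this immediately identifies $M(t) = \prod_{N \in \mathcal{N}_t} N$ as the corresponding cotilting $A'$-module. Note first that each $N \in \mathcal{N}_t$ is annihilated by $\ann{t}$ by Lemma \ref{Lem: ann-of-ind} and the definition of $\mathcal{N}_t$, so $\mathcal{N}_t \subseteq \Mod{A'}$, and that $A'$ is itself gentle by Corollary \ref{cor: ann is gentle}.

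For the rigidity conditions I would proceed in two parts. To check injective dimension at most one over $A'$, apply Lemma \ref{Lem: id-gentle} to each string module $M(\alpha)$ with $\alpha \in T$: the conditions of the lemma fail only when some continuing arrow $l_0$ or $l_n$ of $Q$ together with a further arrow $a$ gives a relation $l_0 a$ or $l_n a$, and Lemma \ref{Lem: vert-trans} shows that any such obstructing arrow lies in $\ann{t}$ (either because an adjacent vertex corresponds to an arc in $T \cap \Gamma$ or because of a 3-cycle crossing with some arc in $T$), hence is absent from $Q_t$. For the infinite-dimensional band modules $M(\lambda, \pm\infty)$ and the generic module $G$ appearing in the strictly asymptotic case, apply Lemma \ref{Lem: band-Z-id} to the gentle algebra $A'$, using that the band $\beta$ still corresponds to a band in $Q_t$ whenever these modules are summands of $M(t)$ (since otherwise some arrow of $u_\beta$ would be in $\ann{t}$ and $M(\lambda, n)$ would not be an $A'$-module at all). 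For $\Ext$-orthogonality, the natural injection $\Ext^1_{A'}(M, N) \hookrightarrow \Ext^1_{A(\Gamma)}(M, N)$ reduces everything to vanishing over $A(\Gamma)$: pairs of arcs in $T$ are non-crossing so Theorem \ref{Thm: extensions} gives vanishing between string summands; the asymptotic and peripheral arcs appearing in $T$ do not cross $\beta$, giving vanishing between string and band summands; and the disjointness $P_1 \cap P_2 = \emptyset$ together with Proposition on band-band extensions handles the remaining pairs.

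For maximality, suppose $X$ is an indecomposable pure-injective $A'$-module such that $\{X\} \cup \mathcal{N}_t$ remains a rigid system; I want to show $X \in \Prod{\mathcal{N}_t}$. By Theorem \ref{Thm: ind pinj}, $X$ is a string module, a band module, or the generic module. If $X = M(\varepsilon)$ with $\varepsilon \notin T$, then by maximality of $T$ there is some $\alpha \in T$ crossing $\varepsilon$; a 3-cycle crossing would via Lemma \ref{Lem: vert-trans} force an arrow of $u_\varepsilon$ to lie in $\ann{t}$, contradicting $M(\varepsilon) \in \Mod{A'}$, so the crossing is outside any 3-cycle. Proposition \ref{prop:list-extensions} then produces a non-split standard extension between $M(\alpha)$ and $M(\varepsilon)$ in $\Mod{A(\Gamma)}$, and one verifies that the middle term $M(\nu)$ is supported on $Q_t$ (immediate for overlap extensions since $u_\nu$ is a splice of substrings of $u_\alpha$ and $u_\varepsilon$; for arrow extensions the connecting arrow $a$ cannot lie in $\ann{t}$ without forcing either an endpoint arc of $\alpha$ or $\varepsilon$ into $T \cap \Gamma$ or producing a 3-cycle crossing with an existing arc of $T$, contradicting $M(\alpha), M(\varepsilon) \in \Mod{A'}$). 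This yields a non-trivial element of $\Ext^1_{A'}$, contradicting the assumed rigidity. For $X$ a band module or $G$, finite-dimensional band modules are excluded by self-extensions and the classification of extensions between infinite-dimensional band modules, together with $P_1 \cup P_2 = \k^*$, forces $X$ to already lie in $\mathcal{N}_t$.

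The main obstacle is the maximality argument, specifically the verification that the standard extensions produced by Proposition \ref{prop:list-extensions} from crossings outside of 3-cycles actually descend from $\Mod{A(\Gamma)}$ to $\Mod{A'}$; this is the step that requires carefully coordinating the combinatorics of the crossings of $\varepsilon$ with arcs of $T$ against the explicit description of $\ann{t}$ from Lemma \ref{Lem: vert-trans}.
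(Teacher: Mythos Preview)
Your overall structure matches the paper's: verify the two rigidity conditions via Lemma~\ref{Lem: id-gentle}, Lemma~\ref{Lem: band-Z-id} and Theorem~\ref{Thm: extensions}, then establish maximality, and finally invoke Theorem~\ref{Thm: cotilt vs rigid}. The rigidity portion is essentially the paper's argument.

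There is a genuine gap in your maximality argument. Your sentence ``a 3-cycle crossing would via Lemma~\ref{Lem: vert-trans} force an arrow of $u_\varepsilon$ to lie in $\ann{t}$'' only covers one of the two configurations in Definition~\ref{def:cross-in-3-cycle}. If $\varepsilon$ is the arc whose string \emph{contains} the distinguished letter $a$ and $\alpha\in T$ is the arc ending at the third vertex, then indeed $\alpha$ crosses $a$ in a $3$-cycle, Lemma~\ref{Lem: vert-trans}(1)(b) puts $a\in\ann{t}$, and $M(\varepsilon)\notin\Mod{A'}$. But the roles can be reversed: $\alpha\in T$ may be the arc with $u_\alpha$ containing $a$ while $s(u_\varepsilon)=j$ is the third vertex. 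In that case Lemma~\ref{Lem: vert-trans} says nothing about arrows of $u_\varepsilon$, and your argument does not exclude it. Relatedly, in your treatment of arrow extensions the stated contradiction ``contradicting $M(\alpha),M(\varepsilon)\in\Mod{A'}$'' is not the right one when the connecting arrow lies in $\ann{t}$ via case~(b) of Lemma~\ref{Lem: vert-trans}.

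The paper organises maximality differently and thereby handles exactly this point. It does not try to rule out 3-cycle crossings of $\varepsilon$ with $T$ at all. Instead it starts from a hypothetical nonzero $\Ext^1_{A(\Gamma)}(Y,M(t))$ (or the reverse) and invokes Lemma~\ref{Lem: standard maps and standard extensions} to extract a standard extension between $Y$ and some $M(\delta)$ with $\delta\in T$. For overlap extensions the middle term is a string module whose string is built from substrings of $u_\delta$ and $u_\varepsilon$, hence supported on $Q_t$ by Lemma~\ref{Lem: vert-trans}; this already gives $\Ext^1_{A'}\neq 0$. For an arrow extension whose connecting arrow $a$ lies in $\ann{t}$, Lemma~\ref{Lem: vert-trans}(1) produces an arc $\alpha'\in T$ ending at the third vertex of the triangle containing $a$; a direct geometric check inside that triangle then shows $\alpha'$ crosses $\delta$, contradicting that $T$ is non-crossing. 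So the contradiction is between two arcs of $T$, not with $M(\varepsilon)$ being an $A'$-module.
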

\begin{proof}
First note that, by Theorem \ref{Thm: extensions}, we have that $\Ext^1_{A(\Gamma)}(M,N) = 0$ for all $N,M \in \mathcal{N}_t$ and so it follows that also $\Ext^1_{A(\Gamma)/\ann{t}}(M,N) = 0$ for all $N,M \in \mathcal{N}_t$.

Next we show that $\id{A(\Gamma)/\ann{t}}{M} \leq 1$ for all $M\in \mathcal{N}_t$.  If $M$ is a band module then $\id{A(\Gamma)/\ann{t}}{M} \leq \id{A(\Gamma)}{M} \leq 1$ by Lemma \ref{Lem: band-Z-id}.  Suppose $M \cong M(\alpha)$ for some arc $\alpha$.  We show that the conditions of Lemma \ref{Lem: id-gentle} holds for $u_\alpha$.  Suppose that the endpoint of $\alpha$ is contained in a triangle with edges $\gamma_i$, $\mu$ and $\nu$ such that $\nu$ follows $\mu$ in the 
anticlockwise direction.

\[
\includegraphics[width=7cm]{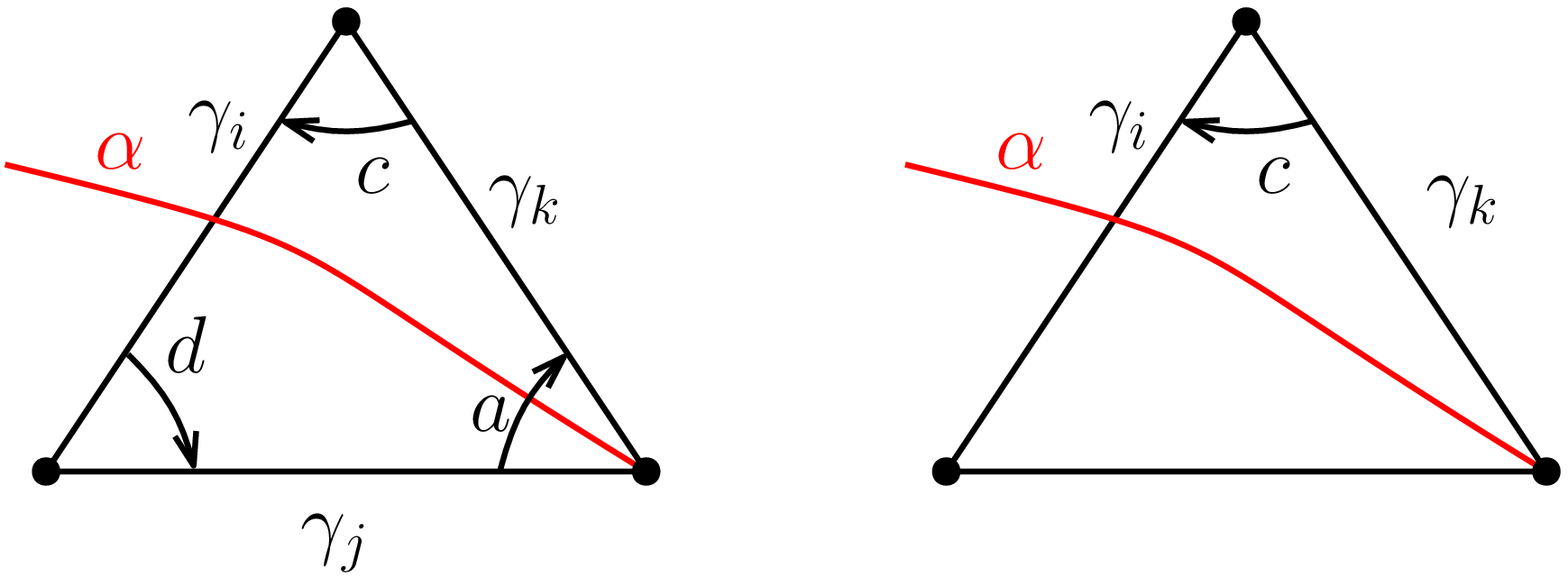}
\]
Without loss of generality assume that $s(u_\alpha) =i$.  If all of $\gamma_i$, $\mu$ 
and $\nu$ are arcs in $\Gamma$, say $\mu = \gamma_j$ and $\nu = \gamma_k$, 
then there are arrows $a \colon j \to k$, $c \colon k \to i$ and $d \colon i \to j$ in $Q$.  
Moreover, the arrow $c$ is the unique arrow such that $u_\alpha c$ is a string 
and the arrow $a$ is the unique arrow such that $ca \in I(\Gamma)$. 
However, the arc $\alpha$ crosses $a$ in a 3-cycle so by 
Corollary~\ref{Cor: new-quiver} there is no arrow d in $Q_t$ such that $cd \in I_t$. 
Thus condition \textbf{(i)} of Lemma \ref{Lem: id-gentle} holds. 
In a similar way, if either or both of $\mu$ and $\nu$ are boundary components, 
then, even if there exists an arrow $c$ such that $u_\alpha c$ is a string, there is no arrow $a$ such that $ca \in I_t$. 
If $\alpha$ is finite, we may run the same argument at the other endpoint of 
$\alpha$ and we have that $\id{A(\Gamma)/\ann{t}}{M} \leq 1$ by 
Proposition~\ref{Lem: id-gentle}.

We have shown that $\mathcal{N}_t$ is a rigid system in $\Mod{A(\Gamma)/\ann{t}}$.  Finally we must show that it is maximal.  Let $\mathcal{L}$ be a rigid system in $\Mod{A/\ann{t}}$ such that $\Prod{\mathcal{N}_t} \subseteq \Prod{\mathcal{L}}$.  Without loss of generality, we may assume that $M(t)$ is a direct summand of the direct product $L$ of the objects in $\mathcal{L}$ i.e.~we have that $ L \cong M(t) \oplus X$ for some module $X$.  Now, by Lemma \ref{Lem: self-orth}, we must have that $\Ext^1_{A(\Gamma)/\ann{t}}( L,  L) =0$.  It follows that $\Ext^1_{A(\Gamma)/\ann{t}}(Y, M(t)) = 0 = \Ext^1_{A(\Gamma)/\ann{t}}(M(t), Y)$ for all indecomposable summands $Y$ of $X$.  Suppose there is an indecomposable summand $Y$ of $X$ such that $\Ext^1_{A(\Gamma)}(Y, M(t)) \neq 0$ or $\Ext^1_{A(\Gamma)}(M(t), Y) \neq 0$.  By Lemma \ref{Lem: standard maps and standard extensions}, there exists an arrow or an overlap extension between $Y$ and some $N \in \mathcal{N}_t$.  By Lemma \ref{Lem: vert-trans}, the middle term of any overlap extension between $Y$ and $M$ will be contained in $\Mod{A(\Gamma)/\ann{t}}$.   Therefore the only possibility is the following.  There is some $\delta \in T$ and a arc $\varepsilon$ such that $Y \cong M(\varepsilon)$ such that there is an arrow extension between $M(\delta)$ and $M(\varepsilon)$ with the corresponding arrow $a$ contained in $\ann{t}$.  However, by Lemma \ref{Lem: vert-trans}, this implies that there is a arc $\alpha \in T$ such that $\alpha$ crosses $a$ in a 3-cycle.  But then $\alpha$ crosses $\delta$, which is a contradiction.

So then all indecomposable summands of $X$ must be contained in $\mathcal{N}_t$ because any other indecomposable pure-injective module will have a non-trivial extension with $M(t)$.  By \cite[Thm.~5.6]{PP} there are no superdecomposable pure-injective modules over $A/\ann{t}$ and so, by Remark \ref{Rem: Prod = Prod}, we have shown that $\Prod{\mathcal{L}} = \Prod{\mathcal{N}_t}$.

By Theorem \ref{Thm: cotilt vs rigid}, we also have that $M(t)$ is a cotilting module over $A(\Gamma) /\ann{t}$.
\end{proof}

\begin{proposition}\label{Prop: asymp is cosilting}
If $t$ is an asymptotic triangulation, then $M(t)$ is a cosilting $A(\Gamma)$-module.
\end{proposition}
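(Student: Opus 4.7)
The strategy is to apply Corollary~\ref{cor: max rig cosilt}. The module $M(t) = \prod_{N \in \mathcal{N}_t} N$ is pure-injective as a direct product of the pure-injective modules listed in Theorem~\ref{Thm: ind pinj}. Condition~(1) of the corollary is precisely the content of Proposition~\ref{Prop: asym max rig}, together with Remark~\ref{Rem: Prod = Prod}, which guarantees $\Prod{M(t)} = \Prod{\mathcal{N}_t}$. It therefore remains to verify condition~(2): that $\Cogen{M(t)}$ is a torsion-free class in $\Mod{A(\Gamma)}$.

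Closure of $\Cogen{M(t)}$ under submodules and direct products is immediate from the definition, so the substance lies in verifying closure under extensions. Let $0 \to L \to X \to N \to 0$ be exact in $\Mod{A(\Gamma)}$ with $L, N \in \Cogen{M(t)}$. Both $L$ and $N$ embed into powers of $M(t)$ and are therefore annihilated by $\ann{t}$. The plan is first to show $\ann{t} \cdot X = 0$, placing the sequence in $\Mod{A(\Gamma)/\ann{t}}$, and then to invoke the fact that $\Cogen{M(t)} = {}^{\perp_1}_{A(\Gamma)/\ann{t}} M(t)$ is closed under extensions in $\Mod{A(\Gamma)/\ann{t}}$, since $M(t)$ is cotilting over $A(\Gamma)/\ann{t}$ by Proposition~\ref{Prop: asym max rig}. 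Given $\ann{t} \cdot X = 0$, the conclusion $X \in \Cogen{M(t)}$ will then be immediate.

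The hard part will be establishing $\ann{t} \cdot X = 0$. From $\ann{t} \cdot L = 0 = \ann{t} \cdot N$ one obtains $\ann{t}^2 \cdot X = 0$ at once; the task is to improve this to $\ann{t} \cdot X = 0$. To this end I would work with the explicit generators of $\ann{t}$ described by Lemma~\ref{Lem: vert-trans} and Corollary~\ref{Cor: new-quiver}: the idempotents $e_i$ with $\gamma_i \in T \cap \Gamma$ (trivially in $\ann{t}^2$) and the arrows $a \colon i \to j$ of $Q$ that lie in $\ann{t}$. For an arrow $a$ that is in $\ann{t}$ via case~(a) of Lemma~\ref{Lem: vert-trans}(1), the idempotent at one endpoint of $a$ lies in $\ann{t}$, hence $a = a \cdot e_i$ or $a = e_j \cdot a$ belongs to $\ann{t}^2$, and so $a \cdot X \subseteq \ann{t} \cdot L = 0$. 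The remaining and most delicate case is an arrow $a$ lying in $\ann{t}$ only via case~(b), i.e.\ crossed in a $3$-cycle by some $\alpha \in T$ with neither $\gamma_i$ nor $\gamma_j$ in $T$. Here I would argue that a nonzero action of such $a$ on $X$ would, through the list of standard extensions in Proposition~\ref{prop:list-extensions} and the correspondence between crossings and extensions (Theorem~\ref{Thm: extensions}), produce an arc in $(\mathrm{S}, \mathrm{M})$ crossing some arc of $T$, contradicting the maximality of $T$. Once $\ann{t} \cdot X = 0$ is established, the argument concludes.
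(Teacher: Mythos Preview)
Your overall plan coincides with the paper's: invoke Corollary~\ref{cor: max rig cosilt}, use Proposition~\ref{Prop: asym max rig} for condition~(1), and then establish closure of $\Cogen{M(t)}$ under extensions in $\Mod{A(\Gamma)}$. Your reduction to showing $\ann{t}\cdot X = 0$ is natural, and your treatment of the idempotents $e_i\in\ann{t}$ and of arrows falling under case~(a) of Lemma~\ref{Lem: vert-trans}(1) is correct.

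The gap is in case~(b). Your proposed argument---invoking Proposition~\ref{prop:list-extensions}, Theorem~\ref{Thm: extensions}, and the maximality of $T$---does not connect to the situation at hand. Those results describe extensions between indecomposable string and band modules, whereas $L$, $N$, $X$ are arbitrary objects of $\Cogen{M(t)}$; there is no mechanism that produces an arc from a hypothetical nonzero element $ax\in X$. Moreover, ``maximality of $T$'' says that any arc \emph{not} in $T$ crosses some arc of $T$; producing an arc that crosses something in $T$ is no contradiction at all, so the appeal to maximality is misdirected.

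The paper handles this step differently and does not try to prove $\ann{t}\cdot X=0$ for arbitrary $X$. It restricts attention to an arrow extension between string modules $M(\alpha),M(\delta)\in\Cogen{M(t)}$ via an arrow $a\in\ann{t}$, and then uses the cotilting identity $\Cogen{M(t)}={}^{\perp_1}M(t)$ in $\Mod{A(\Gamma)/\ann{t}}$ rather than any maximality of $T$. If $\varepsilon\in T$ is the arc crossing $a$ in the $3$-cycle with third vertex $k$, one checks that the other two arrows $c,d$ of that triangle are \emph{not} in $\ann{t}$ (any arc crossing $c$ or $d$ in a $3$-cycle would cross $\varepsilon$), so $u_\alpha c^{-1}u_\varepsilon$ is a string over $A(\Gamma)/\ann{t}$ and hence $\Ext^1_{A(\Gamma)/\ann{t}}(M(\alpha),M(\varepsilon))\neq 0$. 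Since $M(\varepsilon)$ is a summand of $M(t)$, this contradicts $M(\alpha)\in\Cogen{M(t)}$. The insight you are missing is that the obstruction is detected by the cotilting condition over the quotient, not by the combinatorics of $T$.
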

\begin{proof}
By Corollary \ref{cor: max rig cosilt} and Proposition \ref{Prop: asym max rig}, it remains to show that $\Cogen{M(t)}$ is a torsion-free class in $\Mod{A(\Gamma)}$.  We know that $\Cogen{M(t)}$ is a torsion-free class in $\Mod{A(\Gamma)/\ann{t}}$ because $M(t)$ is a $A(\Gamma)/\ann{t}$-cotilting module by Proposition \ref{Prop: asym max rig}. 
In particular, if $M,N$ are in $\Cogen{M(t)}$ such that $0 \to M \to L \to N \to 0$ is 
a short exact sequence in $\Mod{A(\Gamma)/\ann{t}}$, then $L\in \Cogen{M(t)}$. 
As we saw in the proof of Proposition \ref{Prop: asym max rig}, any overlap 
extension in $\Mod{A(\Gamma)/\ann{t}}$ is also an overlap extension in 
$\Mod{A(\Gamma)}$.  We must rule out the possibility that there is an arrow 
extension $0 \to M \to L \to N \to 0$ in $\Mod{A(\Gamma)}$ with 
$L \notin \Cogen{M(t)}$.

Suppose there is an arrow extension between $M$ and $N$.  Without loss of generality we may assume that $M\cong M(\alpha)$ and $N \cong M(\delta)$ for arcs $\alpha$ and $\delta$ and that there is an arrow $a \in \ann{t}$ 
such that $u_\delta a^{-1}u_\alpha$ is a string.  By Lemma \ref{Lem: vert-trans}, 
this means that there is $\varepsilon\in T$ such that $\varepsilon$ crosses $a$ 
in a 3-cycle.  We must therefore have the following local configuration.  

\[
\includegraphics[width=3cm]{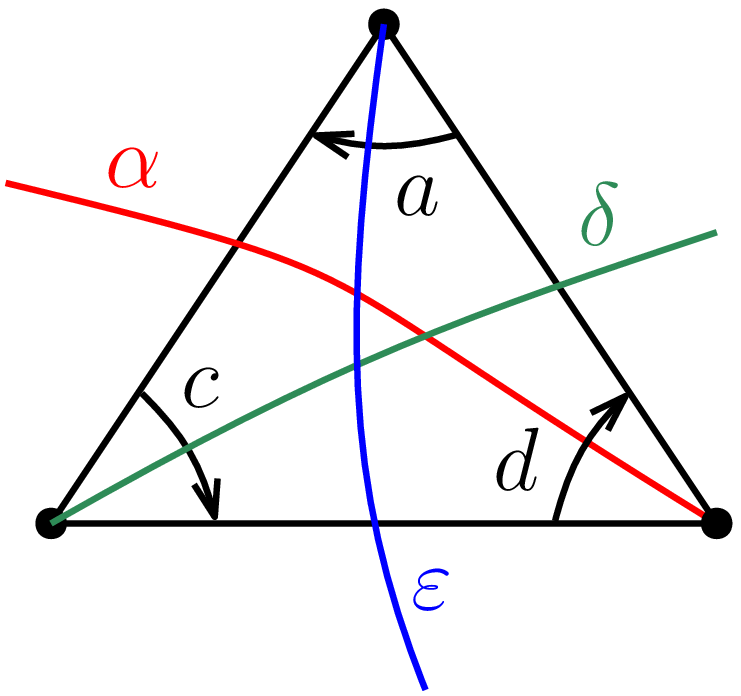}
\]

The arrows $c$ and $d$ cannot be in $\ann{t}$ because any arc crossing them 
in a 3-cycle would cross $\varepsilon$. 
Thus $\Ext^1_{A(\Gamma)/\ann{t}}(M(\alpha), M(\varepsilon)) \neq 0$ since 
$u_\alpha c^{-1}u_\varepsilon$ is a string in $(Q_t, I_t)$.  But this contradicts that 
$\Cogen{M(t)} = {}^{\perp}M(t)$ in $\Mod{A(\Gamma)/\ann{t}}$.
\end{proof}

%
\subsubsection{The classification.}

\begin{lemma}\label{Lem: add arcs}
If $t$ is a partial asymptotic triangulation, with corresponding $T$, 
then the following statements hold. 
\begin{enumerate}
\item 
For $i\in Q$, $e_i \in \ann{t}$ if and only if $\gamma_i \in \Gamma$ 
does not cross any arc in $T$.
\item 
Let $a$ be an arrow in $Q$ such that $e_{s(a)}$ and $e_{t(a)}$ are not contained 
in $\ann{t}$.  Then $a\in \ann{t}$ if and only if one of the following statements hold.
\begin{enumerate}
\item[(i)] There is an arc $\alpha \in T$ that crosses $a$ in a 3-cycle.
\item[(ii)] There is an arc $\alpha$ in $(\mathrm{S},\mathrm{M})$
that crosses $a$ in a 3-cycle such that 
$\ann{t} = \ann{t\cup\{\alpha\}}$. 
Moreover, the arc $\alpha$ does not cross any arc in $T$.  
\end{enumerate}
\end{enumerate}
\end{lemma}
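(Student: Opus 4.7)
The plan is to prove part (1) directly from Lemma \ref{Lem: ann-of-ind}, and to prove part (2) by separately addressing each direction, with the reverse direction requiring an explicit construction that constitutes the main obstacle.

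For part (1): By Lemma \ref{Lem: ann-of-ind}, $e_i \in \ann{t}$ if and only if the vertex $i$ does not appear in the vertex sequence of $u_\alpha$ for any $\alpha \in T$, nor in that of $u_\beta$ when $t$ is strictly asymptotic. The first condition is equivalent to $\gamma_i$ not being crossed by any arc in $T$, since the vertex sequence of $u_\alpha$ lists precisely the indices of the arcs in $\Gamma$ crossed by $\alpha$. The condition on $u_\beta$ is automatic in the strictly asymptotic case: any asymptotic arc in $T$ eventually spirals around the inner boundary with periodic part matching $u_\beta$, so its vertex sequence contains every vertex of $u_\beta$; equivalently, if $\gamma_i$ is not crossed by any arc in $T$ then $\gamma_i$ must be peripheral and so is not crossed by $\beta$ either.

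For part (2), $(\Leftarrow)$: In case (i), if $\alpha \in T$ crosses $a$ in a 3-cycle, then in the internal triangle $\{i,j,k\}$ the arc $\alpha$ starts or ends at the vertex opposite $\gamma_k$ with $\gamma_k$ as its first or last crossing. Any arc $\delta$ whose string contains $a$ (respectively $a^{-1}$) passes through this triangle from $\gamma_i$ to $\gamma_j$ (respectively the reverse), and in minimal position must cross $\alpha$ inside the triangle. Since $T$ is non-crossing, no such $\delta$ lies in $T$; in the strictly asymptotic case, an asymptotic arc in $T$ would inherit such a letter from $u_\beta$ in its periodic part, so $u_\beta$ also avoids $a$ and $a^{-1}$. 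Lemma \ref{Lem: ann-of-ind} then gives $a \in \ann{t}$. In case (ii), the collection $T \cup \{\alpha\}$ is again a partial asymptotic triangulation, so applying case (i) to it yields $a \in \ann{t \cup \{\alpha\}} = \ann{t}$.

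For part (2), $(\Rightarrow)$: Assume $a \in \ann{t}$, $e_{s(a)}, e_{t(a)} \notin \ann{t}$, and that (i) fails. By part (1) both $\gamma_i$ and $\gamma_j$ are crossed by arcs in $T$, and since no arc in $T$ uses letter $a$, each such arc on crossing $\gamma_i$ (respectively $\gamma_j$) either exits through the remaining edge of the triangle $\mathcal{T}$ containing $\gamma_i, \gamma_j$ or terminates at a vertex of $\mathcal{T}$. First I would argue that $\mathcal{T}$ must be internal, so its third edge is an arc $\gamma_k \in \Gamma$: if $\mathcal{T}$ had a boundary edge, the severely constrained local configurations of arcs of $T$ around $\mathcal{T}$ would be incompatible with $a \in \ann{t}$ holding simultaneously with both endpoint idempotents lying outside $\ann{t}$ and (i) failing. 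Having reduced to the internal case, I would construct $\alpha$ starting at the vertex of $\mathcal{T}$ opposite $\gamma_k$, first crossing $\gamma_k$, and then threading through the adjacent triangles so as to avoid every arc of $T$. The main obstacle is verifying $\ann{t \cup \{\alpha\}} = \ann{t}$: this requires every letter of $u_\alpha$ to already appear in $u_\delta$ for some $\delta \in T$ (or in $u_\beta$ when $t$ is strictly asymptotic). I expect this to follow by choosing $\alpha$ to hug the boundary of the non-crossing family $T$, so that whenever $\alpha$ crosses an arc $\gamma_\ell \in \Gamma$ some arc of $T$ is forced to cross $\gamma_\ell$ in the same local pattern, possibly requiring $\alpha$ to be taken asymptotic when $t$ is strictly asymptotic.
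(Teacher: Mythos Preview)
Your proposal is correct and follows the same overall strategy as the paper. The main difference is in the execution of the forward direction of~(2). Where you split the work into ``show the triangle is internal'' and then ``construct $\alpha$ by threading/hugging'', the paper establishes a single intermediate claim that handles everything at once: no arc $\delta \in T$ has $s(u_\delta) \in \{s(a), t(a)\}$, because such a $\delta$ would force the \emph{other} endpoint idempotent into $\ann{t}$ (any arc crossing $\gamma_{s(a)}$ would have to either use $a$ or cross $\delta$). This claim immediately rules out the boundary-triangle case you flagged, and shows that every arc of $T$ crossing $\gamma_{s(a)}$ must continue through the third side $\gamma_k$; in particular there is some $\delta_1 \in T$ doing so. The construction of $\alpha$ is then simply: start at the vertex opposite $\gamma_k$, cross $\gamma_k$, and follow $\delta_1$ to its endpoint. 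With this choice every letter of $u_\alpha$ already occurs in $u_{\delta_1}$, so $\ann{t \cup \{\alpha\}} = \ann{t}$ is automatic --- your ``hugging'' idea is exactly right, but the paper makes it precise by literally reusing an arc of $T$ rather than building $\alpha$ triangle-by-triangle.
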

\begin{proof}
\textbf{(1)} This is clear from Observation \ref{Obs: ann}.

\textbf{(2)} $\mathbf{(\Leftarrow)}$ In both case $(i)$ and $(ii)$, we must have $a\in \ann{t}$ because there cannot be any arc $\delta\in T$ such that $u_\delta$ contains $a$ or $a^{-1}$ as a substring since such a $\delta$ would cross $\alpha$.  

$\mathbf{(\Rightarrow)}$ Suppose $a\in \ann{t}$ and there is no arc in $T$ that crosses $a$ in a 3-cycle. First we will argue that there is no arc $\delta$ in $T$ such that $s(u_\delta) = s(a)$ or $s(u_\delta) = t(a)$.  If there is a $\delta \in T$ such that $s(u_\delta) = t(a)$, then $e_{s(a)}\in \ann{t}$ because any other arc $\eta$ that crosses $\gamma_{s(a)}$ must either cross $\delta$ or $u_\eta$ must contain $a$ or $a^{-1}$ as a substring.  Such an $\eta$ cannot be contained in $T$. A similar argument yields that, if there is a $\delta \in T$ such that $s(u_\delta) = s(a)$, then $e_{t(a)} \in \ann{t}$.  We have assumed that $e_{s(a)}$ and $e_{t(a)}$ are not contained in $\ann{t}$ so there is no arc $\delta$ in $T$ such that $s(u_\delta) = s(a)$ or $s(u_\delta) = t(a)$.

It follows that there exists an arc $\alpha$ that crosses $a$ in a 3-cycle and does not cross any arcs in $T$.  Finally we must show that we can choose $\alpha$ such that $\ann{t} = \ann{t\cup\{\alpha\}}$.  We have that $\ann{t\cup\{\alpha\}} \subset \ann{t}$, so we must show that $u_\alpha$ does not contain any (trivial or non-trivial) letters that are not already contained in $u_\delta$ for some $\delta\in T$.  Let $\gamma_k \in \Gamma$ be such that $\gamma_{s(a)}, \gamma_{t(a)}, \gamma_k$ form a triangle.   The conclusion of the first part of this proof tells us that we have some arcs $\delta_1$ and $\delta_2$ in $T$ such that $\delta_1$ crosses $\gamma_{s(a)}$ followed by $\gamma_k$ and $\delta_2$ crosses $\gamma_{t(a)}$ followed by $\gamma_k$.  Choose $\alpha$ to cross $a$ in a 3-cycle and then follow $\delta_1$ to its endpoint.  
\[
\includegraphics[width=4cm]{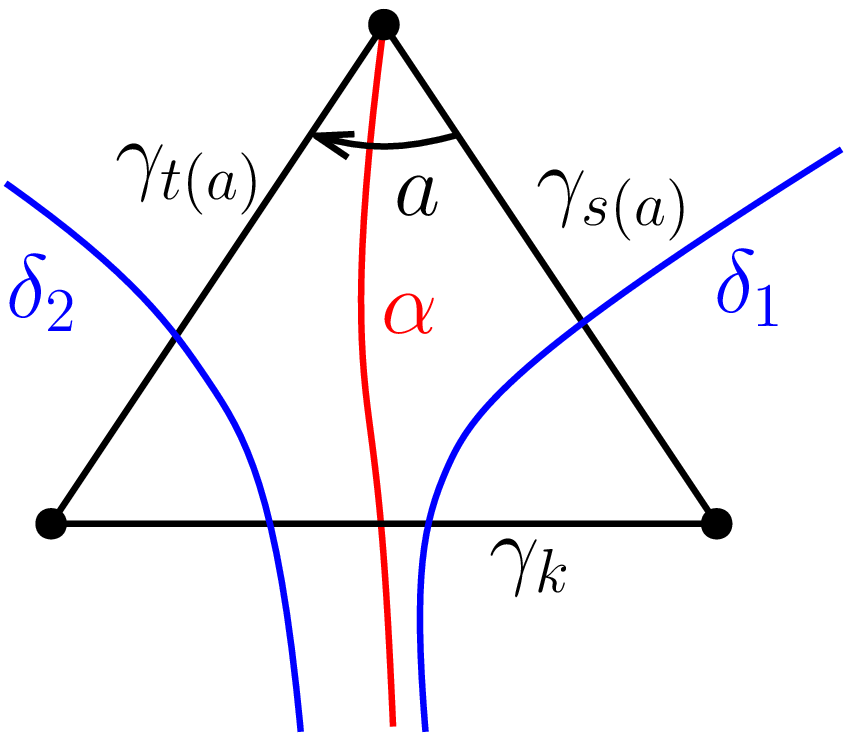}
\]
\end{proof}

\begin{lemma}\label{Lem: complete}
Let $t$ be a partial asymptotic triangulation.  There exists an asymptotic triangulation $s$ such that $\mathcal{N}_t \subseteq \mathcal{N}_s$ and $\ann{t} = \ann{s}$.
\end{lemma}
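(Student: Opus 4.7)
\emph{Proof plan.} The plan is to iteratively enlarge $t$, at each step either extending the powerset data $P_1, P_2$ or adding a new arc to $T$, while preserving the annihilator $\ann{t}$. The key observation is Lemma \ref{Lem: paths arrows}: $\ann{t}$ is generated as an ideal by the vertices and arrows it contains, so combined with Lemma \ref{Lem: ann-of-ind}, adding an arc $\mu$ preserves $\ann{t}$ if and only if no letter of $u_\mu$ lies in $\ann{t}$. If $t = (T, P_1, P_2)$ is strictly asymptotic with $P_1 \cup P_2 \neq \k^*$, I would first set $P_1 := P_1 \cup (\k^* \setminus P_2)$: since $G \in \mathcal{N}_t$ already contributes the string $u_\beta$, each added $M(\lambda, \infty)$ brings the same string and $\ann{t}$ is unchanged.

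Next, assume $T$ is not yet a maximal non-crossing collection. I would distinguish three sub-cases. \textbf{(a)} If some $\gamma_i \in \Gamma \setminus T$ does not cross $T$, equivalently $e_i \in \ann{t}$ by Lemma \ref{Lem: add arcs}(1), add $\gamma_i$; by the convention $M(\gamma_i) = 0$, so neither $\mathcal{N}_t$ nor $\ann{t}$ changes. \textbf{(b)} Otherwise, if some arrow $a \in \ann{t}$ is not crossed in a 3-cycle by any arc of $T$, apply Lemma \ref{Lem: add arcs}(2)(ii) to add a compatible arc $\alpha$ crossing $a$ in a 3-cycle with $\ann{t \cup \{\alpha\}} = \ann{t}$. \textbf{(c)} Otherwise, pick any compatible $\mu \notin T$ (which exists because $T$ is not maximal) and show $\ann{t \cup \{\mu\}} = \ann{t}$. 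The argument: if some arrow $a$ appearing in $u_\mu$ lay in $\ann{t}$, then the failure of (b) would give some $\alpha \in T$ crossing $a$ in a 3-cycle, so $s(u_\alpha) = k$ or $t(u_\alpha) = k$ for the third edge $\gamma_k$ of the internal triangle carrying $\gamma_{s(a)}, \gamma_{t(a)}, \gamma_k$; a local analysis in that triangle using minimal homotopy representatives (which pins down the $\gamma_k$-end of $\alpha$ at the vertex opposite $\gamma_k$, namely the vertex shared by $\gamma_{s(a)}$ and $\gamma_{t(a)}$) would then force the chord of $\mu$ between $\gamma_{s(a)}$ and $\gamma_{t(a)}$ and the segment of $\alpha$ to intersect, contradicting compatibility. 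When adding such $\mu$ turns $t$ strictly asymptotic for the first time, one further notes that the letters of $u_\beta$ appear in the periodic tail of $u_\mu$ and hence also lie outside $\ann{t}$, so the argument still goes through.

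The process terminates since each step strictly enlarges $\mathcal{N}_t$, while the number of non-crossing arcs on $(\mathrm{S},\mathrm{M})$ is bounded and the powerset data is disposed of in a single step. The main obstacle will be the local geometric argument in sub-case (c): a careful check of how $\alpha$ and $\mu$ can sit inside the internal triangle of $\Gamma$ carrying the 3-cycle so that their minimal representatives are forced to cross, in each of the possible configurations allowed by the condition $s(u_\alpha) = k$ or $t(u_\alpha) = k$.
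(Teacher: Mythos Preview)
Your plan is correct and follows the same overall structure as the paper: both first adjust the band-module data, then add the arcs of $\Gamma$ not crossing $T$ (your case (a), the paper's $S_0$), and then use Lemma~\ref{Lem: add arcs}(2)(ii) to add an arc crossing each annihilated arrow in a $3$-cycle (your case (b), the paper's $S_1,\dots,S_n$).

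The genuine difference is in the final completion. The paper completes $S_n$ to a maximal collection $S$ \emph{arbitrarily} and then applies Lemma~\ref{Lem: vert-trans} once, to the finished asymptotic triangulation $s$, to check that every vertex and arrow in $\ann{t}$ lies in $\ann{s}$: the witnesses (the arcs of $\Gamma$ added in $S_0$ and the $\alpha_j$ added in $S_j$) are already in $S$, so Lemma~\ref{Lem: vert-trans} applies directly. Your case (c) instead verifies, for each added arc $\mu$, that no letter of $u_\mu$ lies in $\ann{t}$, via the local crossing analysis inside the internal triangle. Both routes work, but the paper's is shorter: once (a) and (b) are done, Lemma~\ref{Lem: vert-trans} disposes of the completion in one stroke, and the geometric configuration you identify as the main obstacle is never needed. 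A minor point: your termination argument should track $T$ rather than $\mathcal{N}_t$, since adding $\gamma_i\in\Gamma$ in case (a) enlarges $T$ but not $\mathcal{N}_t$ (as $M(\gamma_i)=0$).
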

\begin{proof}
By definition, we have that either $t=T$ or $t = (T,P_1,P_2)$. 
For the second case, let $Q_1 : = P_1$ and let $Q_2 := \k^*\setminus P_1$. 
In the following proof we will define sets $S_0, \dots,S_{n+1}$ of arcs in $(\mathrm{S},\mathrm{M})$ for some
$n\in \mathbb{N}$, all containing $T$, from which will will construct $s$. 

We fix the following notation:  for each $0 \leq i\leq n+1$, 
let 
\[
s_i := \begin{cases} S_i & \text{if } t = T; \\ (S_i, Q_1, Q_2) & \text{if } t=(T, P_1, P_2)\end{cases}
\]

\noindent where the sets $S_i$ are defined as follows.  Let $S_0 := \{ \gamma_i \in \Gamma \mid \gamma_i \text{ does not cross any } \alpha\in T\} \cup T$.  Clearly $\ann{s_0} = \ann{t}$.  Let $a_1, \dots, a_n$ be the arrows in $\ann{t}$ that are not crossed in a 3-cycle by any arc in $T$ and such that $s(a_i)$, $t(a_i)$ are not contained in $\ann{t}$.  Then, for $1\leq i\leq n$, let $S_i := S_0 \cup \{\alpha_1,\dots,\alpha_i\}$ where $\alpha_i$ is an arc such that $\alpha_i$ crosses $a_i$ in a 3-cycle and $\ann{s_i} = \ann{s_{i-1}}$.  Each $\alpha_i$ exists by Lemma \ref{Lem: add arcs}.

Let $S:= S_{n+1}$ be any completion of $S_n$ to a full asymptotic triangulation of 
$(\mathrm{S},\mathrm{M})$ and let $s:= s_{n+1}$. 
Clearly $\mathcal{N}_t \subseteq \mathcal{N}_s$. 
We finish the proof by arguing that $\ann{s} = \ann{t}$. 
Since $M(t)$ is a direct summand of $M(s)$, we have that $\ann{s} \subseteq \ann{t}$. 
By Lemma \ref{Lem: paths arrows}, in order to show that $\ann{t} \subseteq \ann{s}$, 
it is sufficient to show that every vertex and every arrow contained in $\ann{t}$ is 
also contained in $\ann{s}$.  

Suppose $e_i$ is a vertex in $\ann{t}$.  Then either 
$\gamma_i \in T\cap\Gamma \subseteq S\cap\Gamma$ or $\gamma_i$ is not crossed 
by any arc in $T$. 
In the latter case we have that $\gamma_i \in S_0 \subseteq S\cap\Gamma$. 
By Lemma \ref{Lem: vert-trans}, both cases imply that $e_i \in \ann{s}$.

Suppose $a$ is an arrow contained in $\ann{t}$.  By the above argument, we may 
assume that both $e_{s(a)}$ and $e_{t(a)}$ are not in $\ann{t}$.  Then either $a$ is 
crossed in a 3-cycle by some $\alpha \in T \subseteq S$ or $a =a_j$ for some 
$1\leq j\leq n$.  In the latter case, we have that $a$ is crossed in a 3-cycle by 
$\alpha_j\in S_j\subseteq S$.  By Lemma \ref{Lem: vert-trans}, we have that 
$a\in \ann{s}$.

\end{proof}

\noindent  By Remark \ref{Rem: parametrising sets}, the following theorem yields the Main Theorem of the introduction.

\begin{theorem}\label{Thm: main theorem}
Every cosilting $A(\Gamma)$-module $C$ is equivalent to a module $M(t)$ where $t$ is a uniquely determined asymptotic triangulation.
\end{theorem}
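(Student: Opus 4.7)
The proof plan is to assemble the existence part from Proposition \ref{Prop: cosilt are partial}, Lemma \ref{Lem: complete} and Proposition \ref{Prop: asymp is cosilting}, and then obtain uniqueness from the essential uniqueness of indecomposable summands of pure-injective envelopes (Remark \ref{Rem: ess unique}) together with the bijection between indecomposable pure-injectives and arcs from Theorem \ref{Thm: ind pinj}.

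First I would apply Proposition \ref{Prop: cosilt are partial} to produce a partial asymptotic triangulation $t$ with $\Prod{\mathcal{N}_C}=\Prod{\mathcal{N}_t}$. The key preliminary observation is that this equality of $\Prod$'s forces $\ann{C}=\ann{M(t)}=\ann{t}$, since any two modules $X,Y$ with $\Prod X=\Prod Y$ are mutually direct summands of products of each other and hence annihilate the same ideal. Next, by Lemma \ref{Lem: complete} I can extend $t$ to a genuine asymptotic triangulation $s$ satisfying $\mathcal{N}_t\subseteq \mathcal{N}_s$ and, crucially, $\ann{s}=\ann{t}=\ann{C}$. Proposition \ref{Prop: asymp is cosilting} then tells me $M(s)$ is a cosilting $A(\Gamma)$-module, so by Proposition \ref{Prop: Cosilt is unfaithful cotilt} it is a cotilting module over $A(\Gamma)/\ann{s}=A(\Gamma)/\ann{C}$, and by Theorem \ref{Thm: cotilt vs rigid} the set $\mathcal{N}_s$ is a maximal rigid system in $\Mod{A(\Gamma)/\ann{C}}$.

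At the same time, $C$ itself is cotilting over $A(\Gamma)/\ann{C}$, so $\mathcal{N}_C$ is also a maximal rigid system in $\Mod{A(\Gamma)/\ann{C}}$. From the containment $\Prod{\mathcal{N}_C}=\Prod{\mathcal{N}_t}\subseteq \Prod{\mathcal{N}_s}$ and the maximality of $\mathcal{N}_C$ in $\Mod{A(\Gamma)/\ann{C}}$, we obtain $\Prod{\mathcal{N}_C}=\Prod{\mathcal{N}_s}$; combined with Remark \ref{Rem: Prod = Prod}, this gives $\Prod{C}=\Prod{M(s)}$, i.e., $C$ is equivalent to $M(s)$.

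For uniqueness, suppose $s$ and $s'$ are asymptotic triangulations with $M(s)$ and $M(s')$ equivalent cosilting modules. Then $\Prod{\mathcal{N}_s}=\Prod{\mathcal{N}_{s'}}$, so by Remark \ref{Rem: ess unique} the sets $\mathcal{N}_s$ and $\mathcal{N}_{s'}$ coincide (up to isomorphism of their members). Using Theorem \ref{Thm: ind pinj}, each indecomposable summand determines a unique arc or a unique pair $(\lambda,\pm\infty)\in \k^*\times\{\infty,-\infty\}$ (plus possibly the generic module $G$), so $T=T'$, $P_1=P_1'$ and $P_2=P_2'$, yielding $s=s'$. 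I do not expect any single step to present a real obstacle; the work has already been done in the preceding sections, and the only mild subtlety is verifying $\ann{C}=\ann{s}$ so that the maximal rigid systems $\mathcal{N}_C$ and $\mathcal{N}_s$ live in the same module category, which is where the $\ann{t}=\ann{s}$ clause of Lemma \ref{Lem: complete} does its job.
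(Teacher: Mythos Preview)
Your existence argument is correct and follows the paper's proof essentially verbatim: Proposition~\ref{Prop: cosilt are partial} produces the partial asymptotic triangulation $t$, Lemma~\ref{Lem: complete} completes it to an asymptotic triangulation $s$ with the same annihilator, Proposition~\ref{Prop: asymp is cosilting} makes $M(s)$ cosilting, and maximality of $\mathcal{N}_C$ as a rigid system over $A(\Gamma)/\ann{C}$ forces $\Prod{\mathcal{N}_C}=\Prod{\mathcal{N}_s}$.

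The uniqueness argument, however, has a genuine gap. You claim that $\Prod{\mathcal{N}_s}=\Prod{\mathcal{N}_{s'}}$ implies $\mathcal{N}_s=\mathcal{N}_{s'}$ ``by Remark~\ref{Rem: ess unique}''. But Remark~\ref{Rem: ess unique} only says that if $\PI\bigl(\bigoplus_{M\in\mathcal{N}}M\bigr)\cong \PI\bigl(\bigoplus_{L\in\mathcal{L}}L\bigr)$ as modules, then $\mathcal{N}$ and $\mathcal{L}$ agree up to isomorphism. Equality of $\Prod$-closures is strictly weaker than isomorphism of the pure-injective envelopes, and in general a product $\prod_{N\in\mathcal{N}}N$ can acquire indecomposable pure-injective summands not isomorphic to any $N\in\mathcal{N}$; for instance the generic module $G$ is a direct summand of an infinite product of Pr\"ufer modules. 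So the deduction $\Prod{\mathcal{N}_s}=\Prod{\mathcal{N}_{s'}}\Rightarrow \mathcal{N}_s=\mathcal{N}_{s'}$ is not justified by Remark~\ref{Rem: ess unique}.

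The paper handles uniqueness differently. From $\Prod{\mathcal{N}_s}=\Prod{\mathcal{N}_t}$ (so in particular $\ann{s}=\ann{t}$) one obtains, via Lemma~\ref{Lem: self-orth}, that $\Ext^1_{A(\Gamma)/\ann{C}}(N,M)=0=\Ext^1_{A(\Gamma)/\ann{C}}(M,N)$ for all $N\in\mathcal{N}_t$ and $M\in\mathcal{N}_s$. One must then argue \emph{geometrically}, using Theorem~\ref{Thm: extensions} and Lemma~\ref{Lem: vert-trans}, that the corresponding arcs cannot cross---neither outside $3$-cycles, nor in a $3$-cycle, nor via an arrow in $\ann{t}$---and hence every arc of $T$ lies in $S$ (and conversely), with the band parameters forced to match as well. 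This is the step your proposal skips, and it is not a formality: it is precisely where the equality $\ann{s}=\ann{t}$ and the description of $\ann{t}$ in Lemma~\ref{Lem: vert-trans} are used to rule out the ``hidden'' crossings that Ext over the quotient does not see.
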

\begin{proof}
Suppose $C$ is a cosilting module then $C$ is a cotilting module over $A(\Gamma)/\ann{C}$ and by Theorem \ref{Thm: cotilt vs rigid}, we have that $\mathcal{N}_C$ is a maximal rigid system in $\Mod{A(\Gamma)/\ann{C}}$.  Then, by Proposition \ref{Prop: cosilt are partial}, there is a partial asymptotic triangulation such that $\mathcal{N}_C = \mathcal{N}_t$ and so $\ann{C} = \ann{t}$.  By Lemma \ref{Lem: complete}, there exists an asymptotic triangulation $s$ such that $\ann{s} = \ann{t}$ and $\mathcal{N}_C =\mathcal{N}_t \subseteq \mathcal{N}_s$.  By Proposition \ref{Prop: asymp is cosilting}, we have that $M(s)$ is cosilting and so $\mathcal{N}_s$ is a maximal rigid system in $\Mod{A(\Gamma)/\ann{C}}$ such that $\Prod{\mathcal{N}_C} \subseteq \Prod{\mathcal{N}_s}$.  Therefore, by the maximality of $\mathcal{N}_C$, we have that $\Prod{\mathcal{N}_C} = \Prod{\mathcal{N}_s}$ and so $C$ is equivalent to $M(s)$.  

Suppose there is some other asymptotic triangulation $s$ such that $M(s)$ is equivalent to $C$.  Then $\ann{s} = \ann{t}$ and $\Prod{\mathcal{N}_s} = \Prod{\mathcal{N}_t}$. 
As $M(s) \in \Prod{\mathcal{N}_t}$, there exists a module $X$ such that 
$M(s)\oplus X \cong \prod_{i\in I} M_i$ with $M_i \in \mathcal{N}_t$. 
Without loss of generality, assume that a copy of every module in $\mathcal{N}_t$ 
occurs as a summand of $X\oplus M(s)$. By Lemma \ref{Lem: self-orth}, we have 
that $\Ext^1_{A(\Gamma)/\ann{C}}(\prod_{i\in I} M_i, \prod_{i\in I} M_i) = 0$ and 
hence, for every $N\in \mathcal{N}_t$, we have 
$\Ext^1_{A(\Gamma)/\ann{C}}(N, M) = 0 = \Ext^1_{A(\Gamma)/\ann{C}}(M, N)$ for all 
$M \in \mathcal{N}_s$.  Let $\delta$ be the arc corresponding to $N$ and first suppose 
$N \cong M(\delta)$.  As we argued in Proposition \ref{Prop: asym max rig}, it follows 
that, for every arc $\alpha\in S$, if $\delta$ and $\alpha$ cross, then they cross in 
a 3-cycle or in and arrow $a\in \ann{s} = \ann{t}$.  If $\alpha$ and $\delta$ 
cross in a 3-cycle, then $\ann{s} \neq \ann{t}$, by Lemma \ref{Lem: vert-trans}, 
which is a contradiction so $\alpha$ and $\delta$ can not cross in a 3-cycle. 

If $\alpha$ and $\delta$ cross in an arrow $a\in \ann{t}=\ann{s}$, then there are arcs 
$\varepsilon, \varepsilon'$ (with $\varepsilon \in S$ and $\varepsilon'\in T$) that 
cross $a$ in a 3-cycle, by Lemma~\ref{Lem: vert-trans}. 
But then $\varepsilon$ crosses $\alpha$ and $\varepsilon'$ crosses $\delta$, which 
is not possible. 
Thus $\delta$ does not cross any arc $\alpha \in S$ and so it must be that $\delta$ is in 
$S$.  If $N$ is a band module then the above argument implies that $s$ is 
strictly asymptotic, since otherwise there would be a bridging arc that crosses 
$\beta$.  Again $\Ext$-orthogonality implies $P_1 = Q_1$ and $P_2 = Q_2$.  
We may argue in the same, exchanging the roles of $s$ and $t$.  Therefore we have 
shown $t=s$.
\end{proof}

\begin{remark}\label{Rem: recovery}
From this theorem we recover the classification of cotilting modules over $\k \tilde{A_n}$ given in \cite{BuanKrause} by choosing $\Gamma$ to consist only of bridging arcs.  The cotilting modules are then given by the asymptotic triangulations $t$ such that $\Gamma \cap T = \emptyset$.  

Such a choice of $\Gamma$ also yields the description of the torsion classes in a tube category given in \cite{BBM}.  To see this, one should note that the arcs involving only one boundary component gives rise to a geometric model of one of the two (possibly non-homogeneous) tubes in $\mod{A(\Gamma)}$ that contain string modules. \end{remark}


\begin{appendices}
\section{Extensions and asymptotic arcs.}\label{App: Ext}

Throughout this appendix we will follow the definitions and notation set up in Section \ref{Sec: cluster-tilted}.  In particular, we fix $S$ to be the annulus and $\mathrm{M}$ to be a set of marked points in the boundary of $\mathrm{S}$.  We will then consider the $\k$-algebra $A(\Gamma)$ (defined as in Subsection \ref{Sec: type A}) where $\Gamma$ is a triangulation of $(\mathrm{S},\mathrm{M})$.  As in Section \ref{Sec: cluster-tilted}, we assume that $\k$ is algebraically closed.  

The main result will be the following.

\begin{theorem}\label{Thm: extensions app}
Let $M$ and $N$ be string or band modules in $\Mod{A(\Gamma)}$ and let $\alpha$ and $\delta$ be their respective associated arcs. If $M$ and $N$ are not both band modules, then the following statements are equivalent. \begin{enumerate}
\item The $\k$-vector spaces $\Ext^1_{A(\Gamma)}(M,N)$ and $\Ext^1_{A(\Gamma)}(N,M)$ are both zero.
\item The arcs $\alpha$ and $\delta$ do not cross outside of 3-cycles.
\end{enumerate}
\end{theorem}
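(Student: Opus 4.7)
The overall plan is to reduce each remaining case to the finite-dimensional string-module case already treated in \cite{CS}, by combining explicit limit presentations of the indecomposable infinite-dimensional pure-injectives with the combinatorial description of morphisms between string complexes in the bounded derived category of the gentle algebra $A(\Gamma)$. By Remark \ref{Rem: some alg compact} and Remark \ref{Rem: K[T,T^{1}]}, every asymptotic string module $M(\alpha)$ is either a direct limit of finite-dimensional string modules $M(\alpha_n)$ (contracting case) or the pure-injective envelope of such a direct limit (expanding case), corresponding to finite truncations of $u_\alpha$; likewise the Pr\"ufer and adic band modules are direct and inverse limits of the finite-dimensional band modules $M(\lambda, n)$.

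For the implication ``no crossing outside a 3-cycle implies $\Ext^1 = 0$'', I would first settle all finite-dimensional pairs using the isomorphism $\Ext^1_{A(\Gamma)}(M,N) \cong \Hom(M, N[1])$ in the bounded derived category together with the classical combinatorial basis of $\Hom$ between string complexes over a gentle algebra. Each basis element from $M(\alpha)$ to $M(\delta)[1]$ can be matched to a local overlap or arrow configuration of $u_\alpha$ and $u_\delta$, which corresponds geometrically to a crossing of $\alpha$ and $\delta$ outside a 3-cycle. Since any crossing of arcs occurs in a bounded region of $\mathrm{S}$, the no-crossing hypothesis propagates to all finite truncations $(\alpha_n, \delta_m)$, yielding $\Ext^1(M(\alpha_n), M(\delta_m)) = 0$ in every finite approximation; the conclusion then transports to the limit by using that the class of modules of injective dimension at most one is definable (Lemma \ref{Lem: id definable}), together with the bounds on projective and injective dimensions of band modules supplied by Lemma \ref{Lem: band-Z-id}, which between them control the $\varprojlim^1$-terms in the associated Milnor-type exact sequences.

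For the converse, when $\alpha$ and $\delta$ cross outside a 3-cycle, I would directly exhibit a nonsplit extension via the arrow- and overlap-extension constructions of Proposition \ref{prop:list-extensions}; these extend formally to the asymptotic and Pr\"ufer/adic settings because the relevant concatenations of finite, $\mathbb{N}$- or $\mathbb{Z}$-strings give well-defined indecomposable pure-injective middle terms. The main technical obstacle will be the mixed case where both $M$ and $N$ are infinite-dimensional pure-injectives of different types (for instance an expanding asymptotic string module paired with an adic band module): here one must simultaneously interchange $\Ext^1$ with the relevant direct and inverse limits on either side, which reduces to verifying the Mittag-Leffler property of the inverse systems coming from the explicit injective coresolutions described in Lemma \ref{Lem: id-gentle}.
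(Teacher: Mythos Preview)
Your approach is genuinely different from the paper's, and it has a real gap.

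The paper does \emph{not} reduce to the finite-dimensional case via limits. Instead it works directly in the homotopy category $\mathrm{K}(A(\Gamma))$: it writes down explicit projective resolutions $P_\alpha^\bullet$ for infinite-dimensional string modules (string complexes built from the homotopy string $\tau_\alpha$, using direct products in the expanding case and direct sums in the contracting case) and band complexes $B_{\lambda,n}^\bullet$, and then extends the Arnesen--Laking--Pauksztello classification of morphisms between string complexes to this setting. The key technical point (Proposition~\ref{Prop: standard iff non-zero}) is that the ALP algorithm for detecting null-homotopy either terminates (producing a quasi-graph map, hence a standard map) or, in the genuinely new infinite cases, runs forever along the periodic part and forces the map to be null-homotopic. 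Standard maps are then matched with standard extensions (Lemma~\ref{Lem: standard maps and standard extensions}) and with crossings outside 3-cycles (Lemma~\ref{Lem: main theorem for n=1}); higher band modules are handled by lifting from $n=1$. Limit arguments of the type you propose appear only for the generic module $G$, in the body of the paper (proof of Theorem~\ref{Thm: extensions}), and there they reduce to the Pr\"ufer/adic cases already established by the homotopy-category method.

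The gap in your plan is the sentence ``the no-crossing hypothesis propagates to all finite truncations $(\alpha_n,\delta_m)$''. It does not. The truncation $\alpha_n$ of Remark~\ref{Rem: some alg compact} is obtained by following $\alpha$ to its $n$th crossing with $\Gamma$ and then adding a segment to the opposite marked point of that triangle; this extra segment can cross $\delta$ (or $\delta_m$) outside a 3-cycle even when $\alpha$ and $\delta$ do not. So vanishing of $\Ext^1$ between the truncations is not available as input. A second, independent problem is the expanding case: $M(\alpha)=M^\Pi(\alpha)$ is the pure-injective envelope of the direct-sum module, and from the pure-exact sequence $0\to M^\oplus(\alpha)\to M^\Pi(\alpha)\to C\to 0$ together with $\id{A(\Gamma)}{N}\le 1$ you only get that $\Ext^1(M^\Pi(\alpha),N)\to\Ext^1(M^\oplus(\alpha),N)$ is \emph{surjective}, not injective; so $\Ext^1(M^\oplus(\alpha),N)=0$ does not force $\Ext^1(M^\Pi(\alpha),N)=0$. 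Your acknowledged ``main technical obstacle'' (the mixed infinite--infinite case) is thus not a detail to be filled in but the heart of the matter, and the paper circumvents it entirely by the direct homotopy-category computation.
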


We will approach this theorem in stages.  First we will describe explicitly the form of extensions 
between indecomposable pure-injective $A(\Gamma)$-modules.  Our approach is similar to \cite{ALP, CPS, CPS2}. 
We will then connect these extensions to crossings of arcs in $(\mathrm{S},\mathrm{M})$ (outside of 3-cycles). 
In this first section our approach is similar to \cite{CS}.

%
\subsection{Extensions in $\Mod{A(\Gamma)}$ and morphisms in the homotopy category.}\label{Sec: string and band complexes}

Let $M$ and $N$ be string or band modules over $A(\Gamma)$ and let $\alpha$ and $\delta$ be their respective associated arcs. Assume that $M$ and $N$ are not both band modules.  We will identify $M$ and $N$ with their projective resolutions, denoted $P_M^\bullet$ and $P_N^\bullet$ respectively, and make use of the following well-known isomorphisms (see, for example, \cite[Cor.~10.4.7, Cor.~10.7.5]{Weibel}). \begin{align*} \Ext^1_{A(\Gamma)}(M,N) & \cong \Hom_{\mathrm{D}(A(\Gamma))}(M[0], N[1])\\
& \cong \Hom_{\mathrm{K}(A(\Gamma))}(P_M^\bullet, P_N^\bullet[1])   .\end{align*}
where $\mathrm{D}(A(\Gamma))$ is the unbounded derived category of $\Mod{A(\Gamma)}$ and $\mathrm{K}(A(\Gamma))$ is the homotopy category of (co)chain complexes in $\Mod{A(\Gamma)}$.  

In \cite{BM} it is shown that every indecomposable complex in $\mathrm{D^b}(\mod{A(\Gamma)})$ is isomorphic to a complex of finite-dimensional projective modules that is either a string complex or a band complex associated to a homotopy string or a homotopy band.  We will see that $P_N^\bullet$ and $P_M^\bullet$ also have the form of a string or a band complex, however the projective modules may be infinite-dimensional.  

Next we define what we mean by a homotopy string in this setting.  Observe that any finite string may be uniquely expressed as a sequence $p_nq_n^{-1}\dots p_1q_1^{-1}p_0q_0^{-1}$ where $p_i$ and $q_j$ are paths of length $\geq 1$ for $0\leq i< n$ and $0 < j \leq n$ and $p_n$ and $q_0$ are possibly trivial paths.  Similarly, any $\mathbb{N}$-string may be uniquely expressed as a sequence $\dots p_1q_1^{-1}p_0q_0^{-1}$.  In the following definition, all strings will be represented in this way.

\begin{definition}\label{Def: homotopy string}
\begin{enumerate}
\item A sequence of arrows $\dots a_3a_2a_1a_0$ is called a \textbf{antipath} if $a_{i+1}a_i \in I(\Gamma)$ for all $i\geq 0$.

\item Let $u_\alpha = \dots p_1q_1^{-1}p_0q_0^{-1}$ be an $\mathbb{N}$-string (respectively let $u_\alpha = p_nq_n^{-1}\dots p_1q_1^{-1}p_0q_0^{-1}$ be a finite string).  \begin{enumerate}
\item If there exists an arrow $a_0$ such that $u_\alpha a_0^{-1}$ is a string, then define $h(u_\alpha)$ to be the sequence $u_\alpha a_0^{-1}a_1^{-1}a_2^{-1}\dots$ where $\dots a_2a_1a_0$ is the longest possible antipath starting at $a_0$.
\item If there is no such arrow $a_0$, then define $h(u_\alpha)$ to be the sequence $\dots p_1q_1^{-1}p_0$ (respectively $p_nq_n^{-1}\dots p_1q_1^{-1}p_0$).
\end{enumerate}

\item Let $u_\alpha$ be an $\mathbb{N}$-string, then the \textbf{homotopy string $\tau_\alpha$ associated to $u_\alpha$} is defined to be $h(u_\alpha)$.

\item Let $u_\alpha$ be a finite string, then the \textbf{homotopy string $\tau_\alpha$ associated to $u_\alpha$} is defined to be $h(h(u_\alpha^{-1})^{-1})$.
\end{enumerate}
\end{definition}

From this data we can define string complexes, which are essentially infinite-dimensional versions of the string complexes defined in \cite{BM}.  For a vertex $i$ in $Q$, we will denote the corresponding indecomposable projective module by $P(i)$.  Moreover, we will make use of the $\k$-basis of $\Hom_{A(\Gamma)}(P(i), P(j))$ given by paths $p \colon j \to i$.  We will denote these basis elements by the paths $p$.

As with string infinite-dimensional string modules, we must consider the expanding and contracting cases separately.

\begin{definition}
\begin{enumerate}
\item Suppose $u_\alpha = \dots p_1q_1^{-1}p_0q_0^{-1}$ is a contracting $\mathbb{N}$-string and let $\tau_\alpha$ be the homotopy string associated to $u_\alpha$.  If $h(u_\alpha)$ fell into case \textbf{(b)}, then we take $P(t(a_k))$, $a_0q_0$ and $a_{k+1}$ to be zero for $k \geq 0$.  

Define the \textbf{string complex} $P_\alpha^\bullet$ as follows. \[ P_\alpha^0 := \bigoplus_{i\geq 0} P(s(p_i)), \hspace{7pt} P_\alpha^{-1} := P(t(a_0)) \oplus (\bigoplus_{i\geq0} P(t(p_i))), \hspace{7pt} P_\alpha^{-k} := P(t(a_{k-1})), \hspace{7pt} P_\alpha^{-l} := 0 \] for $k >1$ and $l < 0$.  Also let \[ d_\alpha^{-1} := (A_{rs})_{r>0, s>0}, \hspace{8pt} d_\alpha^{-k} := a_{k-1}, \hspace{8pt} d_\alpha^{-l} := 0 \] for $k>1$ and $l< 1$ where \[A_{rs}:= \begin{cases} a_0q_0 & \text{if } r=1, s=1, \\
q_i & \text{if } r = i+1, s =i+1 \text{ and } i>0, \\
p_i & \text{if } r = i+1, s = i+2 \text{ and } i\geq0,\\
0 & \text{otherwise}.
\end{cases}\]

\item Suppose $u_\alpha = \dots p_1q_1^{-1}p_0q_0^{-1}$ is a expanding $\mathbb{N}$-string and $\tau_\alpha$ is the homotopy string associated to $u_\alpha$.  Define the \textbf{string complex} $P_\alpha^\bullet$ in the same way as in the contracting case but replacing any direct sums with direct products. 

\item Suppose $u_\alpha$ is a finite string and let $\tau_\alpha$ be the associated homotopy string.  If $h(u_\alpha^{-1})$ fell into case \textbf{(a)}, then denote the antipath by $\dots a_2a_1a_0$.  If $h(h(u_\alpha^{-1})^{-1})$ fell into case \textbf{(a)}, then denote the antipath by $\dots c_2c_1c_0$. If $h(u_\alpha^{-1})$ fell into case \textbf{(b)}, then we take $P(t(a_k))$, $a_0p_0$ and $a_{k+1}$ to be zero for $k \geq 0$.  If $h(h(u_\alpha^{-1})^{-1})$ fell into case \textbf{(b)}, then we take $P(t(c_k))$, $c_0p_n$ and $c_{k+1}$ to be zero for $k \geq 0$.

Define the \textbf{string complex} $P_\alpha^\bullet$ as follows. \[ P_\alpha^0 := \bigoplus_{i=0}^n P(s(p_i)), \hspace{10pt} P_\alpha^{-1} := P(t(a_0)) \oplus (\bigoplus_{i=0}^n P(t(p_i))) \oplus P(t(c_0)),\] \[ P_\alpha^{-k} := P(t(a_{k-1})) \oplus P(t(c_{k-1})), \hspace{10pt} P_\alpha^{-l} := 0 \] for $k >1$ and $l < 0$.  Also let \[ d_\alpha^{-1} := (A_{rs})_{0<r\leq n+1,\: 0<s\leq n+2}, \hspace{8pt} d_\alpha^{-k} := \left(\begin{smallmatrix} a_{k-1} & 0 \\ 0 & c_{k-1} \end{smallmatrix}\right), \hspace{8pt} d_\alpha^{-l} := 0 \] for $k>1$ and $l< 1$ where \[A_{rs}:= \begin{cases} a_0q_0 & \text{if } r=1, s=1, \\
q_i & \text{if } r = i+1, s =i+1 \text{ and } 0<i\leq n-1, \\
p_i & \text{if } r = i+1, s = i+2 \text{ and } 0<i\leq n-1,\\
c_0 p_n & \text{if } r=n+1, s=n+2,\\
0 & \text{otherwise}.
\end{cases}\]
\end{enumerate}
\end{definition}

\noindent We can view the definition of $P_\alpha^\bullet$ for an $\mathbb{N}$-string schematically as follows.
\[ \xymatrix@R=5mm@C=14mm{ & & P(t(p_1)) \ar[r]_{q_2}^{\vdots} \ar[dr]_<<<<<<<{p_1} & P(s(p_2)) \\
& & P(t(p_0)) \ar[r]^{q_1} \ar[dr]^{p_0} & P(s(p_1)) \\
\dots \ar@{.>}[r]^{a_2} & P(t(a_1)) \ar@{.>}[r]^{a_1} & P(t(a_0)) \ar@{.>}[r]^{a_0q_0} & P(s(q_0))
}\]

\noindent Similarly, we can view the definition of $P_\alpha^\bullet$ for a finite string schematically as follows.

\[ \xymatrix@R=5mm@C=14mm{ \dots \ar@{.>}[r]^{c_2} & P(t(c_2)) \ar@{.>}[r]^{c_1} & P(t(c_0)) \ar@{.>}[dr]^{c_0p_n} &  \\
& & P(t(p_{n-1}))  \ar[r]^{q_n}  & P(s(p_n)) \\
& & \ar@{}[ur]|\vdots P(t(p_1)) \ar[dr]^{p_1} & \\
& & P(t(p_0)) \ar[r]^{q_1} \ar[dr]^{p_0} & P(s(p_1)) \\
\dots \ar@{.>}[r]^{a_2} & P(t(a_1)) \ar@{.>}[r]^{a_1} & P(t(a_0)) \ar@{.>}[r]^{a_0q_0} & P(s(q_0))
}\]

As in \cite{ALP, CPS, CPS2}, we will make use of unfolded diagrams i.e.~we represent the complexes $P_\alpha^\bullet$ in the following ways.

\[
\xymatrix@R=1mm@C=13mm{
 & 0 & -1 & 0 & -1 & 0 & -1 & -2 & \\
\dots \ar[r]^{ p_2} & \bullet  & \bullet \ar[r]^{p_1} \ar[l]_{q_2} & \bullet & \bullet \ar[l]_{q_1} \ar[r]^{p_0} & \bullet & \bullet \ar@{.>}[l]_{a_0q_0} & \bullet \ar@{.>}[l]_{a_1} & \dots \ar@{.>}[l]_{a_2}
}
\]

\[
\xymatrix@R=1mm@C=13mm{
-2 & -1 & 0 & -1 & 0 &  -1 & 0 & -1 & -2 \\
\dots \ar@{.>}[r]^{ c_1} & \bullet \ar@{.>}[r]^{c_0p_n} & \bullet & \bullet \ar[l]_{q_n} \ar[r]^{p_{n-1}}& \bullet \ar@{}[r]|{\dots\dots} & \bullet \ar[r]^{p_0} &  \bullet & \bullet \ar@{.>}[l]_{a_0q_0} & \dots \ar@{.>}[l]_{a_1}
}
\]

Recall that $u_\beta$ is the $\mathbb{Z}$-string associated to the closed arc $\beta$.  We will refer to $u_\beta$ as the \textbf{homotopy string associate to $\beta$} and we will denote $u_\beta$ by $\tau_\beta$ when we are considering it as a homotopy string.  Next we will use a string complex $P_{u_\beta}^\bullet$ to define generalised band complexes.    Let $R := \k[\Phi, \Phi^{-1}]$ and recall the classification of indecomposable pure-injective modules given in Remark \ref{Rem: K[T,T^{1}]}.    

\begin{definition}
Let $u_\beta = \dots p_1q_1^{-1}p_0q_0^{-1}p_{-1}q_{-1}^{-1}\dots$ be the periodic $\mathbb{Z}$-string. Define the complex $P_{u_\beta}^\bullet$ to be \[ P_{u_\beta}^{-1} := \bigoplus_{i\in \mathbb{Z}} P(t(p_i)), \hspace{15pt} P_{u_\beta}^0 := \bigoplus_{i\in \mathbb{Z}} P(s(p_i)), \hspace{15pt} P_{u_\beta}^k := 0\] for all $k \neq 0,-1$.  Also let 
$d_{u_\beta}^{-1}:= (A_{rs})_{r,s\in \mathbb{Z}}$ and $d_{u_\beta}^k := 0$ for all $k \neq -1$ where \[A_{rs}:= \begin{cases} q_i & \text{if } r = i+1, s =i+1, \\
p_i & \text{if } r = i+1, s = i+2,\\
0 & \text{otherwise}.
\end{cases}\]

We can view $P_{u_\beta}^\bullet$ as a complex of $(A(\Gamma), R)$-bimodules (see, for example, \cite[Def.~1.3.45]{raph}).  For each $\lambda \in \k^*$ and each $n\in \mathbb{N}\cup\{\infty, -\infty\}$, define the \textbf{band complex} $B_{\lambda,n}^\bullet$ to be the complex obtained by applying $-\otimes_RN_{\lambda,n}$ to $P_{u_\beta}^\bullet$. 
\end{definition}

\begin{remark}\begin{enumerate}\item The complex $B_{\lambda,n}^\bullet$ is a complex of projective $A(\Gamma)$-modules (see, for example, \cite[Lem.~1.3.47]{raph}). 

\item Since $\k$ is algebraically closed, the complex $B_{\lambda,n}^\bullet$ is a band complex as defined in \cite{BM} when $n\in \mathbb{N}$. 
\end{enumerate} 
\end{remark}

We also associate an unfolded diagram to the complexes $B_{\lambda,1}^\bullet$.  The unfolded diagrams of the other band complexes will be considered later (see ...).    Since $u_\beta$ is periodic, we have that there exist non-trivial paths $m_0,\dots m_t$ and $n_0,\dots n_t$ such that $u_\beta = \dots m_t^{-1}n_0m_0^{-1}n_1\dots n_tm_t^{-1}n_0\dots$.  The complex $B_{\lambda,1}^\bullet$ is then represented as follows.

\[ \xymatrix@R=1mm@C=13mm{
 &0&-1&0&-1&0 & -1 & 0 & \\
\dots \ar[r]^{n_{t-2}}  & \bullet & \bullet \ar[r]^{n_{t-1}} \ar[l]_{m_{t-1}} & \bullet  & \bullet \ar[r]^{n_0} \ar[l]_{\lambda m_t} & \bullet & \bullet \ar[l]_{m_0} \ar[r]^{n_1} & \bullet & \dots \ar[l]_{m_1}
}\]

The aim of the preceding definitions was to identify the projective resolutions of the indecomposable pure-injective modules (excluding $G$).  The following lemma confirms that this is the case.

\begin{lemma}\label{Lem: proj res}
\begin{enumerate}
\item Let $u_\alpha$ be a finite or an $\mathbb{N}$-string and let $\alpha$ be the associated homotopy string.  The complex $P_\alpha^\bullet$ is quasi-isomorphic to $M(\alpha)[0]$.

\item Let $\lambda \in \k^*$ and let $n\in \mathbb{N}\cup\{\infty, -\infty\}$.  The complex $B_{\lambda,n}$ is quasi-isomorphic to $M(\lambda, n)[0]$.
\end{enumerate}
\end{lemma}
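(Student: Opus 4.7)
The plan is to verify, in both parts, that the complex in question has cohomology concentrated in degree $0$, where it agrees with the prescribed pure-injective module. In every case we (a)~construct an augmentation $\varepsilon$ from the degree-$0$ term to the target module, (b)~check that $\varepsilon$ is a chain map and is surjective using the gentle relations of $I(\Gamma)$, and (c)~establish exactness in negative degrees by a local computation after multiplication by each idempotent $e_j$ for $j\in Q_0$.

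For part (1), construct $\varepsilon\colon P_\alpha^0 \to M(\alpha)$ as follows: each direct summand $P(s(p_i))$ of $P_\alpha^0$ corresponds to a ``peak position'' in the sequence indexing the canonical basis of $M(\alpha)$, and one sends the idempotent generator of $P(s(p_i))$ to the basis vector at that peak. The gentle relations force $\varepsilon\circ d_\alpha^{-1}=0$, and surjectivity follows because every basis vector of $M(\alpha)$ is obtained from a peak vector by the action of a segment of some $p_i$ or $q_j$. For exactness in negative degrees, apply $e_j$ for each $j\in Q_0$: this reduces the problem to exactness of a complex of $\k$-vector spaces, which is analysed directly using the combinatorics of paths in $(Q,I(\Gamma))$ incident to $j$. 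For finite strings this is the computation in \cite{BM}. For contracting $\mathbb{N}$-strings, both $P_\alpha^\bullet$ and $M^\oplus(\alpha)$ are filtered colimits of the finite truncations $P_{\alpha_n}^\bullet$ and $M(\alpha_n)$ of Remark~\ref{Rem: some alg compact}, and exactness propagates through direct limits. For expanding $\mathbb{N}$-strings, the same truncations appear as direct products rather than direct sums; since arbitrary products are exact in $\Mod{A(\Gamma)}$ and commute with the functor $e_j(-)$, the same finite combinatorial verification applies factor by factor.

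For part (2), observe that $P_{u_\beta}^\bullet$ is an $(A(\Gamma),R)$-bimodule complex each of whose components is free as a right $R$-module of rank equal to the period $s$. The local argument of part (1), applied to the periodic $\mathbb{Z}$-string $u_\beta$, yields $H^0(P_{u_\beta}^\bullet)\cong M^\oplus(\beta)$ as an $(A(\Gamma),R)$-bimodule and $H^i(P_{u_\beta}^\bullet)=0$ otherwise; in particular, the short exact sequence $0\to P_{u_\beta}^{-1}\to P_{u_\beta}^0\to M^\oplus(\beta)\to 0$ consists of flat right $R$-modules. Consequently tensoring with the left $R$-module $N(\lambda,n)$ preserves exactness, and the resulting complex $B_{\lambda,n}^\bullet = P_{u_\beta}^\bullet\otimes_R N(\lambda,n)$ satisfies $H^0(B_{\lambda,n}^\bullet)\cong M^\oplus(\beta)\otimes_R N(\lambda,n)=M(\lambda,n)$ and $H^i(B_{\lambda,n}^\bullet)=0$ for $i\ne 0$.

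The main obstacle lies in the expanding $\mathbb{N}$-string case of part~(1) and, dually, in the adic band module $M(\lambda,-\infty)$ in part~(2), where direct products over infinite index sets appear. The key point is that in $\Mod{A(\Gamma)}$ arbitrary direct products are exact, so multiplying by $e_j$ decomposes the complex as a product of bounded complexes of finite-dimensional $\k$-vector spaces whose exactness reduces to the same local combinatorial check used in the finite-string case. For part~(2), the flatness of each $P_{u_\beta}^i$ as a right $R$-module bypasses any Mittag--Leffler subtleties that might otherwise arise from viewing $N(\lambda,-\infty)$ as an inverse limit.
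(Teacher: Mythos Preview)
Your argument is correct and follows the same route as the paper. The paper is much terser: for part~(1) it simply cites \cite[Cor.~2.12]{CPS} for the cohomology computation (your idempotent/limit treatment of the infinite-dimensional string cases is a reasonable elaboration of that), and for part~(2) it gives exactly your argument, observing that $M^\oplus(\beta)$ is free as a right $R$-module so that $\mathrm{Tor}_1^R(M^\oplus(\beta), N(\lambda,n))=0$ and the tensored two-term complex stays exact.
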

\begin{proof}
\textbf{(1)} Compute cohomology as in \cite[Cor.~2.12]{CPS}.

\textbf{(2)} By computing cohomology of $P_{u_\beta}^\bullet$, we have that the following sequence is a projective resolution of $M^\oplus(\beta)$.
\[  0 \to P_{u_\beta}^{-1} \to P_{u_\beta}^{0} \to M^\oplus(\beta)\to 0.\]  Moreover, the functor $-\otimes_RN_{\lambda,n}$ is right exact, so \[ 0 \to B_{\lambda,n}^{-1} \to B_{\lambda,n}^0 \to M(\lambda, n) \to 0\] is right exact.  The module $M^\oplus(\beta)$ is free as an $R$-module, so $\mathrm{Tor}^R_1(M^\oplus(\beta), N_{\lambda,n}) = 0$, therefore the sequence is exact.  We have shown that this sequence is a projective resolution of $M(\lambda, n)$ and so in particular, the complex $B_{\lambda,n}^\bullet$ is quasi-isomorphic to $M(\lambda,n)[0]$.
\end{proof}

Let $M$ and $N$ be string modules or band modules with $n=1$, such that $M$ and $N$ are not both band modules.  Let $\alpha$ and $\delta$ be the associated arcs in $(\mathrm{S},\mathrm{M})$.  Also, denote the projective resolutions of $M$ and $N$ by $P_M^\bullet$ and $P_N^\bullet$ respectively; note that these are given in Lemma \ref{Lem: proj res}.  Next we will establish when there exists a non-zero morphism $P_M^\bullet \to P_N^\bullet[1]$ by comparing the homotopy strings $\tau_\alpha$ and $\tau_\delta$.  

We refer to \cite[Sec.~3]{ALP} for the definition of \textbf{(singleton) single}, \textbf{double}, \textbf{graph} and \textbf{quasi-graph maps} between $P_M^\bullet$ and $P_N^\bullet[1]$.  These definitions extend easily to include projective resolutions of infinite-dimensional string modules.

\begin{remark}
In \cite{ALP} a quasi-graph map ``from $P_M^\bullet$ to $P_N^\bullet$" determines a non-trival homotopy class of morphisms $P_M^\bullet \to P_N^\bullet[1]$ that does not contain zero.  We use the notation $P_M^\bullet \rightsquigarrow P_N^\bullet[1]$ to indicate that there is such a quasi-graph map.
\end{remark}

For there to be a graph map $P_M^\bullet \to P_N^\bullet[1]$ or quasi-graph map $P_M^\bullet \rightsquigarrow P_N^\bullet[1]$, there must be a (maximal) common substring shared by $\tau_\alpha$ and $\tau_\delta$; we stress that this common substring $\rho$ must be finite unless $\rho$ contains an infinite length antipath.  We will call quasi-graph maps $P_M^\bullet \rightsquigarrow P_N^\bullet[1]$, graph maps, singleton single and singleton single maps $P_M^\bullet \to P_N^\bullet[1]$ the \textbf{standard maps} from $P_M^\bullet$ to $P_N^\bullet[1]$.

If we apply the main theorem of \cite{ALP} to our setting, then we obtain the following proposition.  Since the proof of Proposition \ref{Prop: standard iff non-zero} will extend the arguments given in \cite{ALP}, we include a step-by-step sketch of the proof in order to refer to it later.

\begin{proposition}[{\cite[Thm.~3.15]{ALP}}]\label{Prop: f.d. basis}
Let $M$ and $N$ be finite-dimensional string modules or band modules with $n=1$, such that $M$ and $N$ are not both band modules, and let $P_M^\bullet$ and $P_N^\bullet$ be the projective resolutions of $M$ and $N$ respectively.  The standard maps $P_M^\bullet \to P_N^\bullet[1]$ form a $\k$-basis for $\Hom_{\mathrm{K}(A(\Gamma))}(P_M^\bullet, P_N^\bullet[1])$.
\end{proposition}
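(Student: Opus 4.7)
The plan is to reduce the statement directly to the main theorem of \cite{ALP}, namely \cite[Thm.~3.15]{ALP}, which asserts exactly the analogous result for homotopy strings and homotopy bands over an arbitrary gentle algebra.  Since the authors of the paper set up the projective resolutions $P_M^\bullet$ and $P_N^\bullet$ precisely so that they fit into that framework, the proof should amount essentially to a verification that the present definitions agree with those of \cite{ALP}, followed by a direct appeal to their theorem.

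First I would verify that, under the hypothesis that both $M$ and $N$ are finite-dimensional, the complexes $P_M^\bullet$ and $P_N^\bullet$ of Lemma \ref{Lem: proj res} are (up to isomorphism in $\mathrm{K}(A(\Gamma))$) the string and band complexes associated to the homotopy strings $\tau_\alpha, \tau_\delta$ and the homotopy band $u_\beta$ (with eigenvalue $\lambda$) as constructed in \cite{ALP}.  For a finite string $u_\alpha$, the homotopy string $\tau_\alpha = h(h(u_\alpha^{-1})^{-1})$ prolongs $u_\alpha$ on both ends by (possibly empty, possibly infinite) antipaths, and the description of $P_\alpha^\bullet$ after Definition \ref{Def: homotopy string} is word for word the ALP string-complex construction.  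In the band case the complex $B_{\lambda,1}^\bullet$ specialises, via the Jordan block $N_{\lambda,1}$, to the ALP band complex for $u_\beta$ with eigenvalue $\lambda$.

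Secondly, I would apply \cite[Thm.~3.15]{ALP} to conclude that $\Hom_{\mathrm{K}(A(\Gamma))}(P_M^\bullet, P_N^\bullet[1])$ has a $\k$-basis consisting of graph maps, quasi-graph maps, singleton single maps and singleton double maps.  Because the four classes of maps used in the present paper are inherited verbatim from \cite{ALP}, the resulting basis is exactly the set of standard maps from $P_M^\bullet$ to $P_N^\bullet[1]$ in our sense, and the statement follows.

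The main delicacy, and I expect the only real obstacle, is that since the gentle algebra $A(\Gamma)$ generally has infinite global dimension, the resolutions $P_M^\bullet$ and $P_N^\bullet$ may be unbounded in negative cohomological degrees whenever the antipaths produced by $h$ in Definition \ref{Def: homotopy string} are infinite.  One should therefore check that the ALP arguments still apply in this partly unbounded setting; this should cause no trouble, because each homotopy class of morphisms between projective resolutions of finite-dimensional modules admits a representative supported in finitely many degrees, the enumeration of standard maps is controlled by maximal common substrings of $\tau_\alpha$ and $\tau_\delta$ (with at most an infinite antipath attached at one end), and the null-homotopy calculations of \cite{ALP} do not use boundedness below.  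Making this verification precise is the only step that requires attention beyond citing the ALP theorem.
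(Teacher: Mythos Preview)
Your proposal is correct: the proposition is indeed a direct specialisation of \cite[Thm.~3.15]{ALP}, and verifying that the string and band complexes $P_\alpha^\bullet$, $B_{\lambda,1}^\bullet$ agree with those of \cite{ALP} suffices.  Your worry about unboundedness is largely unnecessary, since \cite{ALP} already works with (possibly one- or two-sided infinite) homotopy strings in $\mathrm{K}^{-,b}(\mathrm{proj}\,A)$; no extra argument beyond the citation is needed.

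The paper, however, takes a deliberately different route.  Rather than simply citing \cite{ALP}, it reproduces a three-step sketch of that proof: (1) graph, single and double maps form a basis in the chain-complex category $\mathrm{C}(A(\Gamma))$; (2) any single or double map whose homotopy class is non-trivial determines a quasi-graph map via a terminating algorithm; (3) graph maps and singleton single/double maps are alone in their homotopy classes.  The reason for this is structural: the very next proposition (Proposition~\ref{Prop: standard iff non-zero}) extends the result to infinite-dimensional string modules, and its proof proceeds by revisiting each of these three steps and showing precisely where and how the \cite{ALP} arguments must be modified (in particular, Step~2's algorithm may fail to terminate, and two new cases must be analysed).  Your direct-citation approach is cleaner if one only wants the finite-dimensional statement, but it leaves no scaffolding for that extension; the paper's sketch is there precisely to be reused.
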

\begin{proof}
\begin{description}
\item[Step 1:] First it is shown that the graph, single and double maps form a basis for $\Hom_{\mathrm{C}(A(\Gamma))}(P_M^\bullet, P_N^\bullet[1])$ where $\mathrm{C}(A(\Gamma))$ is the category of chain complexes in $\Mod{A(\Gamma)}$.  This is done by showing that the morphisms are linearly independent (\cite[Lem.~4.3, Cor.~4.4]{ALP}) and that any non-zero component of a morphism $f \colon P_M^\bullet \to P_N^\bullet[1]$ determines a graph, single or double map that is a summand of $f$ (\cite[Lem.~4.2]{ALP}).  Since $P_M^\bullet$ and $P_N^\bullet$ are bounded complexes of finite-dimensional modules, this is enough to prove that the graph, single and double maps also span $\Hom_{\mathrm{C}(A(\Gamma))}(P_M^\bullet, P_N^\bullet[1])$ (see \cite[Prop.~4.1]{ALP}).

\item[Step 2:] In order to consider morphisms in $\mathrm{K}(A(\Gamma))$, the next step is to consider the class $\mathcal{H}(f)$ of single, double or graph maps $g$ such that $f$ is homotopy equivalent to $\mu g$ for some $0 \neq \mu \in \k$ (see \cite[Def.~4.6]{ALP}).  It is shown in \cite[Prop.~4.8]{ALP} that if $f\colon  P_M^\bullet \to P_N^\bullet[1]$ is a single or double map such that $\mathcal{H}(f) \neq \{f\}$, then there is a quasi-graph map $P_M^\bullet \rightsquigarrow P_N^\bullet[1]$.  The idea of the proof is to produce an algorithm that is determined by a given single or double map and produces the corresponding quasi-graph map.  Since $P_M^\bullet$ and $P_N^\bullet$ are bounded complexes of finite-dimensional modules, the algorithm terminates after finitely many steps.

\item[Step 3:]  The final step is to show that if $f$ is a graph map or a singleton single or double map, then $\mathcal{H}(f) = \{f\}$.  This is done in \cite[Lem.~4.12, Lem.~4.12]{ALP}.
\end{description}
\end{proof}

Next we allow $M$ and $N$ to be infinite-dimensional string modules.  The standard maps no longer necessarily form a basis, however we do obtain the following proposition.

\begin{proposition}\label{Prop: standard iff non-zero}
Let $M$ and $N$ be string modules or band modules with $n=1$, such that $M$ and $N$ are not both band modules, and let $P_M^\bullet$ and $P_N^\bullet$ be the projective resolutions of $M$ and $N$ respectively.  The following statements are equivalent.
\begin{enumerate}
\item The $\k$-vector space $\Hom_{\mathrm{K}(A(\Gamma))}(P_M^\bullet, \: P_N^\bullet[1])$ is zero.
\item There exists a standard map $P_M^\bullet \to P_N^\bullet[1]$.
\end{enumerate}
\end{proposition}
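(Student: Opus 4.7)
My plan is to extend the three-step argument of \cite{ALP} sketched in the proof of Proposition~\ref{Prop: f.d. basis} to cover the infinite-dimensional projective complexes $P_M^\bullet, P_N^\bullet$ arising from asymptotic arcs and from $\lambda$-band modules. I will take the statement to mean that $\Hom_{\mathrm{K}(A(\Gamma))}(P_M^\bullet, P_N^\bullet[1]) \neq 0$ if and only if a standard map exists (the ``$\Leftarrow$'' direction already shows each standard map is non-zero in the homotopy category, so the right-hand side can be read as the failure of vanishing).

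For the direction ``standard map exists $\Rightarrow$ $\Hom \neq 0$'', I would mimic Step 3 of the sketched finite-dimensional proof: show that if $f$ is a graph map or a singleton single/double map, then $\mathcal{H}(f) = \{f\}$, and that a quasi-graph map $P_M^\bullet \rightsquigarrow P_N^\bullet[1]$ determines a homotopy class not containing zero. The arguments of \cite[Lem.~4.12]{ALP} are local at the common substring and around its boundary, so they apply verbatim in the infinite setting provided one verifies that the required homotopy homorphism does not exist, which is a componentwise check using the explicit description of the differentials $d_\alpha^\bullet$, $d_\delta^\bullet$ from Section~\ref{Sec: string and band complexes}. This step extends without difficulty because it only concerns the local shape of $\tau_\alpha,\tau_\delta$ near the common substring.

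For the converse, first note that if $f\colon P_M^\bullet \to P_N^\bullet[1]$ is any morphism in $\mathrm{C}(A(\Gamma))$ with a non-zero component at some degree, then by \cite[Lem.~4.2]{ALP} this component determines a graph, single, or double summand of $f$; this is a purely local statement and so survives the passage to infinite-dimensional projectives, infinite direct sums and direct products (one uses the standard $\k$-basis of $\Hom(P(i), P(j))$ given by paths, which is well-defined component-by-component). Thus any representative of a non-zero class in $\mathrm{K}(A(\Gamma))$ decomposes, in $\mathrm{C}(A(\Gamma))$, as a sum with a graph, single or double map summand. Now apply Step 2 of the sketched proof: run the algorithm of \cite[Prop.~4.8]{ALP} starting from the given single or double map. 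In the finite-dimensional case this algorithm terminates after finitely many iterations and produces either a singleton single/double map or a quasi-graph map with finite common substring. In our setting the algorithm may fail to terminate, and this is precisely the main obstacle.

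The key point to resolve will be that non-termination of the algorithm always produces a valid quasi-graph map. I would argue as follows: each step of the algorithm extends a finite common substring of $\tau_\alpha$ and $\tau_\delta$ by one more letter along a prescribed direction. If the process does not stop, the accumulated substring must be an infinite direction of both homotopy strings, hence must coincide either with the $\mathbb{N}$-string tail of $\tau_\alpha$ (respectively $\tau_\delta$) or with an infinite antipath appended to a finite string in the passage $u\mapsto \tau$. In every such case the resulting infinite common substring is of the form permitted in the definition of a quasi-graph map (it either \emph{is} an infinite antipath or it \emph{contains} one), and the quasi-graph map recipe of \cite{ALP} is well-defined. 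Combined with Step 3, this yields a standard map representing a non-zero class, completing the proof.
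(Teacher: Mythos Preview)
Your Step~3 and the local extension of Step~1 are fine, and your overall framing matches the paper. The substantive error is in Step~2, and it is precisely the ``main obstacle'' you identified.

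You claim that non-termination of the \cite[Prop.~4.8]{ALP} algorithm always yields a valid quasi-graph map, arguing that the accumulated infinite common substring must be, or contain, an infinite antipath. This is false. The infinite common substring produced by non-termination is (up to a finite initial segment) the \emph{periodic part} of one of the $\mathbb{N}$-strings $u_\alpha$ or $u_\delta$, i.e.\ an alternating sequence $\dots p_{n+1}q_{n+1}^{-1}p_nq_n^{-1}$ of genuine paths and inverse paths. This is not an antipath (an antipath is a sequence $\dots a_2a_1a_0$ of arrows with $a_{i+1}a_i\in I(\Gamma)$), nor does it contain one. So the definition of (quasi-)graph map does not apply.

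What actually happens, and what the paper proves, is the opposite conclusion: if the algorithm fails to terminate, then $f$ is \emph{null-homotopic}. The mechanism is a degree mismatch. Using \cite[Lem.~3.5]{CPS} one sees that the infinite common substring produced by the algorithm is supported in cohomological degrees $-1,0$ on the $P_M^\bullet$ side but in degrees $-1,-2$ on the $P_N^\bullet[1]$ side (or the symmetric picture). One then checks two explicit end-configurations (the paper's Case~1 and Case~2) and, by comparing the periodic tails, concludes that the expanding/contracting types of $\alpha$ and $\delta$ are forced in such a way that the infinite chain of homotopies assembles into an actual null-homotopy of $f$. Concretely, one ends up with $\alpha$ contracting or $\delta$ expanding in Case~1 (and dually in Case~2), and in each such situation the direct-sum vs.\ direct-product conventions on the string complexes make the required infinite homotopy well-defined. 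This case analysis is the missing content of your Step~2; without it the implication ``$\Hom\neq 0 \Rightarrow$ a standard map exists'' does not go through, because a non-zero class in $\mathrm{C}(A(\Gamma))$ could a priori be a (possibly infinite) sum of single/double maps each of which runs forever under the algorithm.
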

\begin{proof} 
In the following proof we will denote the arcs associated to $M$ and $N$ by $\alpha$ and $\delta$ respectively.

The idea of the proof is to the extend to arguments of \cite{ALP} to include projective resolutions of infinite-dimensional string modules.  The proof is divided into three steps to reflect the three steps used to prove \cite[Thm.~3.15]{ALP} (see Proposition \ref{Prop: f.d. basis}).  For each step it is necessary to apply the corresponding arguments in \cite{ALP} and then extend them in the way that is described below.
\begin{description}

\item[Step 1:]  First we show that the Proposition \ref{Prop: standard iff non-zero} holds in the category $\mathrm{C}(A(\Gamma))$. The arguments used to obtain \cite[Lem.~4.2, Lem.~4.3, Cor.~4.4]{ALP} may also be applied to projective resolutions of infinite-dimensional string modules.  This yields that the graph, single and double maps $P_M^\bullet \to P_N^\bullet[1]$ are linearly independent in $\mathrm{C}(A(\Gamma))$.  By applying the same argument as the proof of \cite[Prop.~4.1]{ALP}, we achieve Step 1 by ruling out the case where $g$ is a `graph map' induced by an infinite common substring that is not an antipath.  Such a situation cannot arise in our setup since the degrees of any infinite common substring would not be compatible.  Thus the proposition holds in the category $\mathrm{C}(A(\Gamma))$. 

\item[Step 2:] If neither of $M$ and $N$ are finite-dimensional string modules and $f \colon P_M^\bullet \to P_N^\bullet[1]$ is a single or double map such that $\mathcal{H}(f) \neq \{f\}$, then the algorithm described in \cite[Prop.~4.8]{ALP} works in the same way but may not terminate.  If the algorithm stops, then there is a quasi-graph map $g\colon P_N^\bullet \rightsquigarrow P_M^\bullet[1]$.   

We show that if the algorithm does not terminate, then the morphism $f$ is null-homotopic. By the same argument as \cite[Lem.~3.5]{CPS}, the maximal common substring produced by the algorithm is supported in cohomological degrees $-1$ and $0$ only.  Then the algorithm described in the proof of \cite[Prop.~4.8]{ALP} yields the following two extra cases: 

\noindent \textbf{Case $1$:}
\[ \xymatrix@R=1mm@C=11mm{
  & 0 & -1 & 0 & -1 & 0 & -1 &0 & \\
  \ar@{}[r]|{\cdots\cdots} & \bullet \ar@{}[ddl]|\dots \ar@{=}[dd] & \bullet \ar@{=}[dd] \ar[l]_{q_{n+2}} \ar[r]^{p_{n+1}}  & \bullet \ar@{=}[dd] & \bullet \ar@{=}[dd]\ar[l]_{q_{n+1}} \ar[r]^{p_n} & \bullet \ar@{=}[dd] & \bullet \ar[l]_{q_n} \ar[r]^{p_{n-1}}  & \bullet  \ar@{}[r]|{\dots\dots}  &\\
&&&&&&&&\\
  \ar@{}[r]|{\cdots\cdots} & \bullet &  \bullet \ar[l]^{q'_{m+2}} \ar[r]_{p'_{m+1}}  & \bullet & \bullet \ar[l]^{q'_{m+1}} \ar[r]_{p'_{m}} & \bullet  & \bullet \ar@{.>}[l]^{q'_{m}} \ar@{.>}[r]_{p'_{m-1}} & \bullet \ar@{}[r]|{\dots\dots} & \\
& -1 & -2 & -1 & -2 & -1 & -2 & -1 &
}\] where the top unfolded diagram is the unfolded diagram of $\tau_\alpha$ and the bottom one is the unfolded diagram of $\tau_\alpha$.  Moreover, there is no path $p$ from $t(q_m')$ to $t(q_n)$ such that $q_n = pq_m'$.  This is only possible if either $\delta$ is an asymptotic arc with $m=0$ and $q_0'$ is trivial or there is a non-trivial path $q$ such that $qq_n = q_m'$.

\noindent \textbf{Case $2$:}
\[ \xymatrix@R=1mm@C=11mm{
  & 0 & -1 & 0 & -1 & 0 & -1 &0 & \\
  \ar@{}[r]|{\cdots\cdots} & \bullet \ar@{}[ddl]|\dots \ar@{=}[dd] & \bullet \ar[l]_{q_{n+1}} \ar[r]^{p_n} \ar@{=}[dd]  & \bullet \ar@{=}[dd]  & \bullet \ar@{=}[dd]\ar[l]_{q_n} \ar@{.>}[r]^{p_{n-1}} & \bullet  & \bullet \ar@{.>}[l]_{q_{n-1}} \ar@{.>}[r]^{p_{n-2}}  & \bullet \ar@{}[r]|{\dots\dots}  &\\
&&&&&&&&\\
  \ar@{}[r]|{\cdots\cdots} & \bullet &  \bullet \ar[l]^{q'_{m+1}} \ar[r]_{p'_m}  & \bullet & \bullet \ar[l]^{q'_m} \ar[r]_{p'_{m-1}} & \bullet  & \bullet \ar[l]^{q'_{m-1}} \ar[r]_{p'_{m-2}} & \bullet \ar@{}[r]|{\dots\dots} & \\
& -1 & -2 & -1 & -2 & -1 & -2 & -1 &
}\] where, again, the top unfolded diagram is the unfolded diagram of $\tau_\alpha$ and the bottom one is the unfolded diagram of $\tau_\delta$, and there is no path $p$ from $s(p_{m-1}')$ to $s(p_{n-1})$ such that $p_{n-1}p = p_{m-1}'$.  This is only possible if $\alpha$ is an asymptotic arc and $n=0$ or there exists a non-trivial path $q$ such that $p_{n-1} = p_{m-1}'q$

If $\alpha$ and $\delta$ are both expanding asymptotic arcs or both contracting asymptotic arcs, then both cases $1$ and $2$ yield classes of null-homotopic maps.

Suppose that $\alpha$ and $\delta$ are not both expanding asymptotic arcs nor both contracting asymptotic arcs.  We will show that we must have that $\alpha$ is a contracting asymptotic arc or $\delta$ is an expanding asymptotic arc; either scenario yields a class of null-homotopic maps.  Note that the maximal common substring of $\tau_\delta$ and $\tau_\alpha$ must be the maximal periodic part of $\tau_\alpha$ or $\tau_\delta$, otherwise they would be both contracting or both expanding asymptotic arcs.  We will deal with case $1$, case $2$ is similar.

First we assume we have case $1$ and suppose that $m\neq 0$.  The maximal common substring of $\tau_\delta$ and $\tau_\alpha$ is $w:= \dots q_{n+2}^{-1}p_{n+1}q_{n+1}^{-1}p_nq_n^{-1} = \dots (q_{m+2}')^{-1}p_{m+1}' (q_{m+1}')^{-1} p_m'q_n^{-1}$.  If $w$ is the maximal periodic part of $\tau_\alpha$, then it is also the maximal periodic part of $u_\alpha$ and $\alpha$ is a contracting asymptotic arc.  If $w$ is the maximal periodic part of $\tau_\delta$, then it is also the maximal periodic part of $u_\delta$ and $\delta$ is an expanding asymptotic arc.  Now suppose we are in case $1$ with $\delta$ an asymptotic arc and $m = 0$.  We cannot have that $u_\delta = \dots p_{1}'(q_{1}')^{-1}p_0'$ since, by Definition \ref{Def: homotopy string}, the existence of the last letter of $q_n$ would imply that $\tau_\delta \neq u_\delta$.  It must therefore be the case that $u_\delta = \dots p_{1}'(q_1')^{-1}p_0'(q_0')^{-1}$ where $q_0'$ is a path and there is no arrow $a_0$ such that $a_0q_0'$ is a string.  That is, we must have that $q_0' = rq_n$ for some (possibly trivial) path $r$.  It follows that the maximal common substring of $u_\alpha$ and $u_\delta$ is $w := \dots p_{1}'(q_{1}')^{-1}p_0'q_n^{-1} = \dots p_{n+1}q_{n+1}^{-1}p_nq_n^{-1}$.  If $r$ is non-trivial then we have already dealt with this case.  If $r$ is trivial, then $u_\delta$ is the maximal periodic substring of itself and must be expanding.

\item[Step 3:] The arguments given in \cite[Lem.~4.12, Lem.~4.12]{ALP} also yield that if $f$ is a graph map or a singleton single or double map, then $\mathcal{H}(f) = \{f\}$ if $M$ and/or $N$ is an infinite-dimensional string module.
\end{description}
\end{proof}

%
\subsection{Extensions in $\Mod{A(\Gamma)}$ and standard maps.}

Let $M$ and $N$ be string or band modules with $n=1$ over $A(\Gamma)$ and let $\alpha$ and $\delta$ be their respective associated arcs. Assume that $M$ and $N$ are not both band modules.  Let \[\Phi \colon \Hom_{\mathrm{K}(A(\Gamma))}(P_M^\bullet, P_N^\bullet[1]) \to \Ext_{A(\Gamma)}(M,N)\] be the isomorphism described at the beginning of the appendix.

In \cite{CPS}, the authors show that if $M$ and $N$ are finite-dimensional, then $\Phi$ induces a bijection between the standard maps in $\Hom_{\mathrm{K}(A(\Gamma))}(P_M^\bullet, P_N^\bullet[1])$ and the standard extensions in $\Ext_{A(\Gamma)}(M,N)$, which are defined in Proposition \ref{prop:list-extensions}.  The arguments used in \cite{CPS, CPS2} do not depend on $M$ and $N$ being finite-dimensional and so we directly obtain the following lemma.  

\begin{lemma}\label{Lem: standard maps and standard extensions}
Let $M$ and $N$ be string or band modules with $n=1$ over $A(\Gamma)$ and let $\alpha$ and $\delta$ be their respective associated arcs. Assume that $M$ and $N$ are not both band modules.  Then $\Phi$ induces a bijection as follows.
\[\left\{ \text{Standard maps in } \Hom_{\mathrm{K}(A(\Gamma))}(P_M^\bullet, P_N^\bullet[1]) \right\} \overset{\text{1-1}}{\longleftrightarrow} \left\{\text{Standard extensions in } \Ext^1_{A(\Gamma)}(M,N)\right\} \]

\noindent In particular, there exists a standard extension $0 \to N \to E \to M \to 0$ in $\Mod{A(\Gamma)}$ if and only if $\Ext^1_{A(\Gamma)}(M,N) \neq 0$.
\end{lemma}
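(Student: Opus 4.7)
The plan is to use the classical description of $\Phi$: a chain map $f \colon P_M^\bullet \to P_N^\bullet[1]$ is sent to the short exact sequence of zeroth cohomologies obtained from the mapping cone of $f$, i.e. the triangle $P_N^\bullet \to \mathrm{Cone}(f)[-1] \to P_M^\bullet \to P_N^\bullet[1]$ in $\mathrm{K}(A(\Gamma))$. Since $P_M^\bullet, P_N^\bullet$ are projective resolutions, taking $H^0$ produces the required extension $0 \to N \to E \to M \to 0$ with $E = H^0(\mathrm{Cone}(f)[-1])$.

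The core of the proof is then a case-by-case translation of the four types of standard maps into the four types of standard extensions listed in Proposition \ref{prop:list-extensions}. In each case I would proceed as follows: write down the chain-level data of the standard map using the unfolded diagrams from Section \ref{Sec: string and band complexes} (for a graph or quasi-graph map this means a common finite or antipath-terminated substring of $\tau_\alpha$ and $\tau_\delta$; for a singleton single or double map this means an arrow connecting two homotopy letters); form the mapping cone; and then simplify the resulting complex (cancelling contractible summands of the form $P(i) \xrightarrow{\mathrm{id}} P(i)$) until one obtains the projective presentation of the desired middle term $E$. A graph or quasi-graph map will produce an overlap extension of case (1), (2), or (3) of Proposition~\ref{prop:list-extensions}, with the two summands of the middle term corresponding to the two ``sides'' of the maximal common substring; a singleton map will produce an arrow extension of case (4), whose middle term is the unique string (or $u_\beta$) formed by concatenating $u_\alpha$ and $u_\delta$ through the connecting arrow. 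The $*_\alpha/*_\delta/+/-$ decorations are read off from whether the remaining tails of the homotopy strings are finite, expanding, or contracting, which is exactly what controls $\prod$ versus $\bigoplus$ in the construction of the string and band modules.

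Because the mapping cone is formed degree-by-degree and the relevant cancellations are supported on the common substring realising the map, all computations are local and identical to the finite-dimensional calculations already carried out in \cite{CPS, CPS2}; the only extra ingredient is the bookkeeping for infinite tails. Injectivity of $\Phi$ on standard maps then follows from the fact that distinct standard maps produce non-isomorphic middle terms (distinct unfolded diagrams). Surjectivity---that every standard extension arises in this way---is handled by reversing the same cancellation procedure, reading off a standard map from the defining data of the standard extension.

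The main obstacle I anticipate is precisely the bookkeeping in the infinite-dimensional cases: verifying that when the common substring of a quasi-graph map terminates in an infinite antipath tail, the zeroth cohomology of the cone really is the direct product (respectively direct sum) dictated by the expanding (respectively contracting) nature of the asymptotic arc, and analogously that an arrow extension whose middle word equals $u_\beta$ yields $M^+(\beta)$ or $M^-(\beta)$ according to which side of the arrow contributes the direct product. Once this case analysis is complete, the bijection between standard maps and standard extensions is established, and the ``in particular'' statement is immediate: Proposition \ref{Prop: standard iff non-zero} says $\Ext^1_{A(\Gamma)}(M,N) \neq 0$ if and only if some standard map $P_M^\bullet \to P_N^\bullet[1]$ exists, and the bijection then produces the corresponding standard extension.
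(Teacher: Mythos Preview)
Your proposal is correct and follows the same route as the paper: the paper's proof consists solely of the observation that the mapping-cone computation carried out in \cite{CPS, CPS2} for finite-dimensional string and band modules goes through verbatim when one allows infinite tails, and your sketch is precisely an outline of that computation together with the extra bookkeeping for the $\prod$/$\bigoplus$ and $M^{\pm}(\beta)$ cases. The ``in particular'' clause is deduced exactly as you say, via Proposition~\ref{Prop: standard iff non-zero}.
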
 

%
\subsection{Extensions in $\Mod{A(\Gamma)}$ and crossings of arcs.}

As before, let $M$ and $N$ be string or band modules with $n=1$ such that $M$ and $N$ are not both band modules.  We will denote the arc associated to $M$ by $\alpha$ and the arc associated to $N$ by $\delta$.  The next proposition extends some of the results of \cite{CS} to include infinite-dimensional string modules and band modules with $n=1$.  

The following observations are similar to the proof of \cite[Prop.~3.3]{CS}.  They essentially follow from the definitions.  There exists an overlap extension $0\to N \to E\to M \to 0$ if and only if we have to following local configuration. 

\[
\includegraphics[width=4cm]{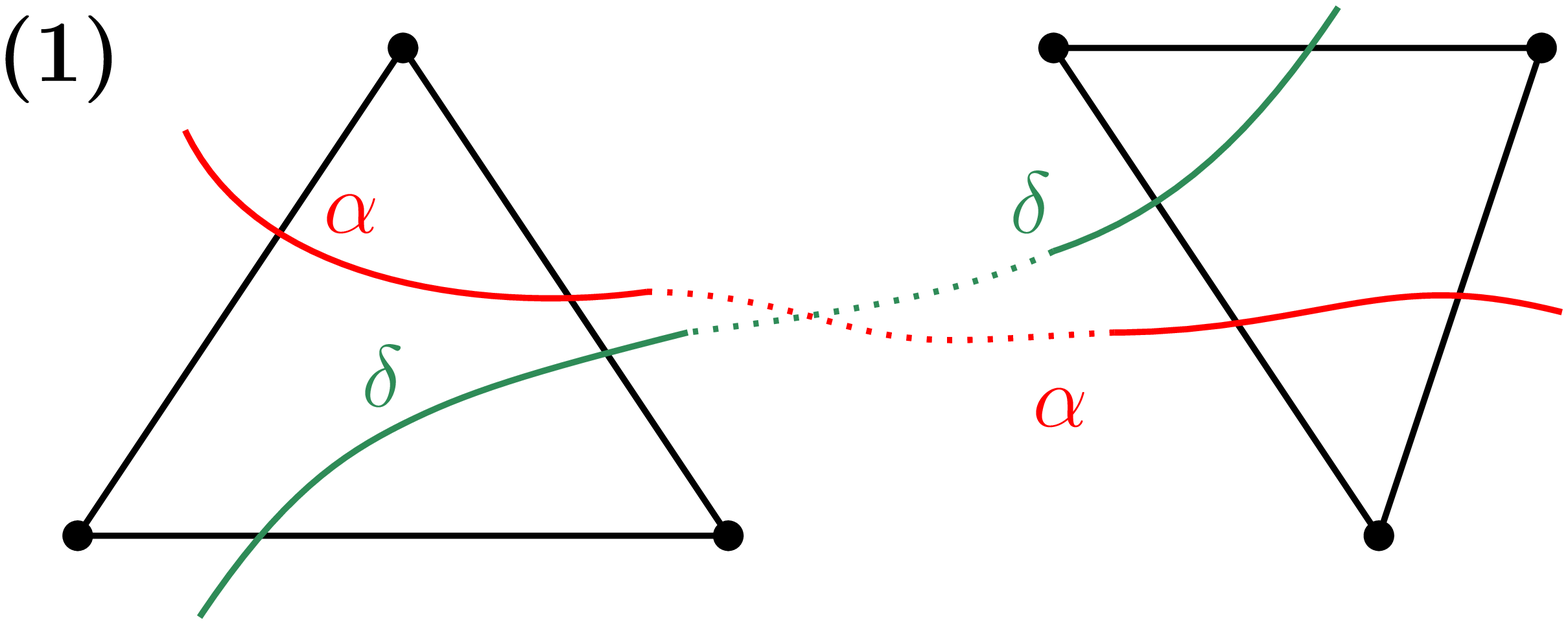}
\]

  There is an arrow extension $0 \to N \to E \to M \to 0$ if and only if we have the following local configuration.
  
  \[
\includegraphics[width=2cm]{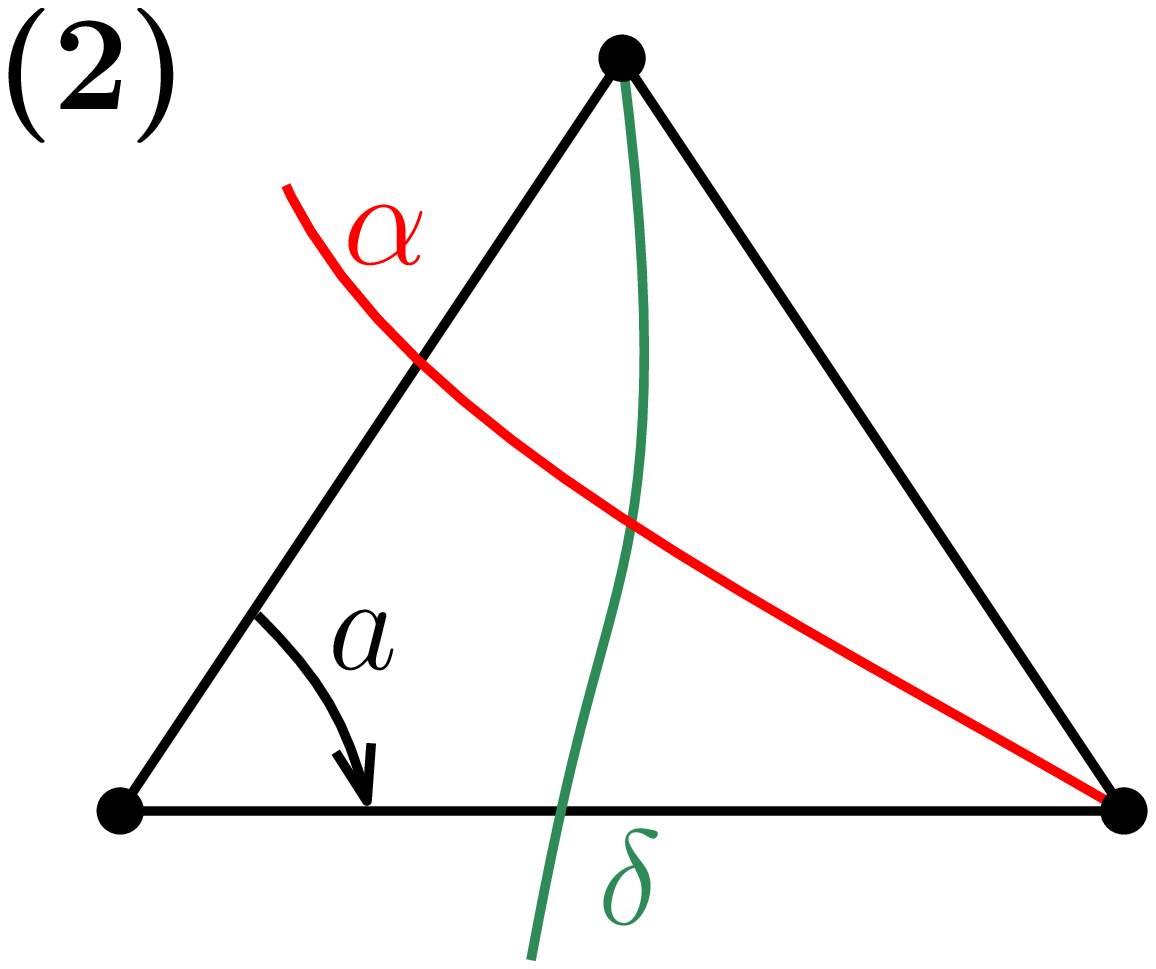}
\]

    The only remaining possible crossing between $\alpha$ and $\delta$ (up to homotopy and reordering of $\alpha$ and $\delta$) corresponds to $\alpha$ and $\delta$ crossing in a 3-cycle.

\begin{lemma}\label{Lem: main theorem for n=1}
Let $M$ and $N$ be string or band modules with $n=1$ such that $M$ and $N$ are not both band modules.  Denote the arcs associated to $M$ and $N$ by $\alpha$ and $\delta$ respectively.  The following statements are equivalent. \begin{enumerate}
\item The $\k$-vector spaces $\Ext^1_{A(\Gamma)}(M,N)$ and $\Ext^1_{A(\Gamma)}(N,M)$ are both zero.
\item The arcs $\alpha$ and $\delta$ do not cross outside of 3-cycles.
\end{enumerate}
\end{lemma}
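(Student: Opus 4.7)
The plan is to chain together the identifications already established in this appendix. By the standard isomorphism
\[ \Ext^1_{A(\Gamma)}(M,N) \;\cong\; \Hom_{\mathrm{K}(A(\Gamma))}(P_M^\bullet, P_N^\bullet[1]), \]
(and its analogue with $M$ and $N$ swapped), the vanishing in condition (1) is equivalent to the vanishing of the corresponding Hom spaces in the homotopy category. Proposition \ref{Prop: standard iff non-zero} converts this into the statement that no standard map $P_M^\bullet \to P_N^\bullet[1]$ (and no standard map in the opposite direction) exists, and Lemma \ref{Lem: standard maps and standard extensions} further converts this into the statement that there is no standard extension between $M$ and $N$ in either direction. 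So the whole content of the lemma becomes: standard extensions are precisely matched with crossings of $\alpha$ and $\delta$ outside of $3$-cycles.

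For the implication $(2)\Rightarrow(1)$ I would argue contrapositively. Given a nonzero element of $\Ext^1_{A(\Gamma)}(M,N)$, Lemma \ref{Lem: standard maps and standard extensions} produces a standard extension, and Proposition \ref{prop:list-extensions} describes its combinatorial data: an overlap extension requires $u_\alpha$ and $u_\delta$ to share a common finite substring $m$ with prescribed direct/inverse branching on either side, while an arrow extension requires an arrow $a\in Q_1$ so that $u_\alpha a^{-1} u_\delta^{-1}$ or $u_\delta a u_\alpha^{-1}$ is again a string, an $\mathbb{N}$-string, or $u_\beta$. In each case, reading this combinatorial data back onto $(\mathrm{S},\mathrm{M})$ via the correspondence between arcs and strings produces precisely one of the two local pictures depicted in the observations preceding the lemma, both of which are crossings of $\alpha$ and $\delta$ that are not $3$-cycle crossings.

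For the converse $(1)\Rightarrow(2)$ I would start from a crossing of $\alpha$ and $\delta$ that is not a $3$-cycle crossing and classify it, using the observations, into either the ``overlap'' local picture or the ``arrow'' local picture. From that picture I would then read off explicit decompositions of $u_\alpha$ and $u_\delta$; in each case one of the four hypotheses of Proposition \ref{prop:list-extensions} is satisfied, producing a non-split standard extension and, via Lemma \ref{Lem: standard maps and standard extensions}, a nonzero element of $\Ext^1$. The step I expect to require the most care is verifying that the $3$-cycle crossing described in Definition \ref{def:cross-in-3-cycle} never produces a standard extension: concretely, if $\alpha$ and $\delta$ meet in a $3$-cycle with arrows $a,c,d$ satisfying $ac,cd,da\in I(\Gamma)$, then the relations in $I(\Gamma)$ obstruct all four string manipulations listed in Proposition \ref{prop:list-extensions} (no overlap string can be formed because the would-be shared substring runs into a relation, and no arrow extension can be formed because the connecting arrow $a$, $c$ or $d$ would produce a zero path). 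Once this obstruction is checked, the three local pictures (overlap, arrow, $3$-cycle) exhaust all crossings up to homotopy and reordering, and the equivalence follows.
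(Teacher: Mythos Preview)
Your proposal is correct and follows essentially the same route as the paper: reduce the vanishing of $\Ext^1$ in both directions to the nonexistence of standard extensions, and then match standard extensions with the two non-3-cycle local crossing pictures. The only difference is cosmetic---the paper packages the first three steps of your chain (the $\Ext\cong\Hom_{\mathrm K}$ isomorphism, Proposition~\ref{Prop: standard iff non-zero}, and the bijection with standard extensions) into a single appeal to the ``In particular'' clause of Lemma~\ref{Lem: standard maps and standard extensions}, and handles the geometric correspondence by invoking the observations immediately preceding the lemma rather than rederiving them from Proposition~\ref{prop:list-extensions}.
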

\begin{proof}
By Lemma \ref{Lem: standard maps and standard extensions}, we have that the groups $\Ext^1_{A(\Gamma)}(M,N)$ and $\Ext^1_{A(\Gamma)}(N,M)$ are both trivial if and only if there are no standard extensions in $\Ext^1_{A(\Gamma)}(M,N)$ or $\Ext^1_{A(\Gamma)}(N,M)$.  By the observations preceding this lemma, this arises exactly when $\alpha$ and $\delta$ do not cross outside of 3-cycles.
\end{proof}

%
\subsection{Band modules with $n \neq 1$.}\label{Sec: higher bands}

In this section we will show that, for a finite or asymptotic arc $\alpha$, a scalar $\lambda \in \k^*$ and $n\in \mathbb{N}\cup\{\infty, -\infty\}$, we have that the homomorphisms in $\mathrm{K}(A(\Gamma))$ between the string complex $P_\alpha^\bullet$ and a band complex $B_{\lambda, n}^\bullet$ are determined by the homomorphisms between $P_\alpha^\bullet$ and $B_{\lambda, 1}^\bullet$.  In order to do this we will follow the arguments of \cite[Sec.~5]{ALP}.

Since $\k$ is algebraically closed, we associate the following unfolded diagram to the complexes $B_{\lambda,n}^\bullet$ with $n\in \mathbb{N}$ as in \cite[Sec.~5]{ALP}.

\[ \xymatrix@R=1mm@C=13mm{
 &-1&0&-1&0& & \\
\dots & \bullet \ar[l]_{\lambda m_t} \ar[r]^{n_0} \ar[dl]^<<{m_t} & \bullet & \bullet \ar[l]_{m_0} \ar[r]^{n_1} & \bullet & \dots \ar[l]_{m_1} & \text{layer n}\\
\dots & \bullet  \ar@{}[d]|<<{\vdots} \ar@{}[d]|<<{\vdots} \ar[l]_<<<<<{\lambda m_t} \ar[r]^{n_0}  & \bullet \ar@{}[d]|<<{\vdots} \ar@{}[d]|<<{\vdots} & \bullet \ar@{}[d]|<<{\vdots} \ar[l]_{m_0} \ar[r]^{n_1} & \bullet \ar@{}[d]|<<{\vdots} & \dots \ar[l]_{m_1} & \text{layer n-1} \ar@{}[d]|<<{\vdots}\\
&&&&&&{}\\
\dots & \bullet \ar[l]_<<<<<{\lambda m_t} \ar[r]^{n_0} \ar[dl]^<<{m_t} & \bullet & \bullet \ar[l]_{m_0} \ar[r]^{n_1} & \bullet & \dots \ar[l]_{m_1} & \text{layer 2}\\
\dots & \bullet \ar[l]_<<<<<{\lambda m_t} \ar[r]^{n_0} & \bullet & \bullet \ar[l]_{m_0} \ar[r]^{n_1} & \bullet & \dots \ar[l]_{m_1} & \text{layer 1}
}\]

\noindent In the same way we associate the following unfolded diagrams to $B_{\lambda,\infty}^\bullet$ and $B_{\lambda, -\infty}^\bullet$ respectively.

\[ \xymatrix@R=1mm@C=13mm{
 & \ar@{}[d]|<<{\vdots}& \ar@{}[d]|<<{\vdots}&  \ar@{}[d]|<<{\vdots} &  \ar@{}[d]|<<{\vdots} & &  \ar@{}[d]|<<{\vdots}\\
&&&&&&\\
\dots & \bullet \ar[l]_<<<<<{\lambda m_t} \ar[r]^{n_0} \ar[dl]^<<{m_t}  & \bullet & \bullet \ar[l]_{m_0} \ar[r]^{n_1} & \bullet & \dots \ar[l]_{m_1} & \text{layer 3} \\
\dots & \bullet \ar[l]_<<<<<{\lambda m_t} \ar[r]^{n_0} \ar[dl]^<<{m_t} & \bullet & \bullet \ar[l]_{m_0} \ar[r]^{n_1} & \bullet & \dots \ar[l]_{m_1} & \text{layer 2}\\
\dots & \bullet \ar[l]_<<<<<{\lambda m_t} \ar[r]^{n_0} & \bullet & \bullet \ar[l]_{m_0} \ar[r]^{n_1} & \bullet & \dots \ar[l]_{m_1} & \text{layer 1}\\
 &-1&0 &-1&0& & 
}\]

\[ \xymatrix@R=1mm@C=13mm{
 &-1&0 &-1&0& & \\
&&&&&&\\
\dots & \bullet \ar[l]_<<<<<{\lambda m_t} \ar[r]^{n_0} \ar[dl]^<<{m_t}  & \bullet & \bullet \ar[l]_{m_0} \ar[r]^{n_1} & \bullet & \dots \ar[l]_{m_1} & \text{layer 1} \\
\dots & \bullet \ar[l]_<<<<<{\lambda m_t} \ar[r]^{n_0} \ar[dl]^<<{m_t} & \bullet & \bullet \ar[l]_{m_0} \ar[r]^{n_1} & \bullet & \dots \ar[l]_{m_1} & \text{layer 2}\\
\dots & \bullet\ar@{}[d]|<<{\vdots} \ar[l]_<<<<<{\lambda m_t} \ar[r]^{n_0}  & \bullet  \ar@{}[d]|<<{\vdots} & \bullet\ar@{}[d]|<<{\vdots} \ar[l]_{m_0} \ar[r]^{n_1} & \bullet\ar@{}[d]|<<{\vdots} & \dots \ar[l]_{m_1} & \text{layer 3} \ar@{}[d]|<<{\vdots}\\
&&&&&&
}\]

\begin{definition}[cf.~{\cite[Def.~5.2]{ALP}}]
Consider a finite or asymptotic arc $\alpha$ in $(\mathrm{S}, \mathrm{M})$ and let $\lambda \in \k^*$, $n\in \mathbb{N}\cup\{\infty, -\infty\}$.   

A morphism $f$ in $\Hom_{\mathrm{K}(A(\Gamma))}(P_\alpha^\bullet, B_{\lambda,n}^\bullet[1])$ is said to be \textbf{lifted from the morphism $g$ in $\Hom_{\mathrm{K}(A(\Gamma))}(P_\alpha^\bullet, B_{\lambda,1}^\bullet[1])$} if there exists an $m\in\mathbb{N}$ such that the components of $f$ that have codomain in layer $m$ of $B_{\lambda,n}^\bullet[1]$ are exactly the same as the components of $g$ and $f$ is the minimal such morphism.

A morphism $f$ in $\Hom_{\mathrm{K}(A(\Gamma))}(B_{\lambda,n}^\bullet, P_\alpha^\bullet[1])$ is defined to be \textbf{lifted from the morphism $g$ in $\Hom_{\mathrm{K}(A(\Gamma))}(B_{\lambda,1}^\bullet, P_\alpha^\bullet[1])$} analogously.
\end{definition}

The following lemma is adapted from \cite[Lem.~5.3]{ALP} and the argument applies easily to the infinite-dimensional case.

\begin{lemma}\label{Lem: can take 1-dim bands}
Let $\alpha$ be a finite or asymptotic arc in $(\mathrm{S}, \mathrm{M})$ and let $\lambda\in \k^*$ and $n\in \mathbb{N}\cup\{\infty, -\infty\}$.  Then the following statements hold.
\begin{enumerate}
\item If $f\neq 0$ is a morphism in $\Hom_{\mathrm{K}(A(\Gamma))}(P_\alpha^\bullet, \: B_{\lambda,n}^\bullet[1])$, then there exists a morphism $g$ in $\Hom_{\mathrm{K}(A(\Gamma))}(P_\alpha^\bullet,\: B_{\lambda,n}^\bullet[1])$ and $\mu \in \k^*$ such that $f = \mu g + f'$ and $g$ is lifted from a non-zero standard map in $\Hom_{\mathrm{K}(A(\Gamma))}(P_\alpha^\bullet, \: B_{\lambda,1}^\bullet[1])$.

\item If $f\neq 0$ is a morphism in $\Hom_{\mathrm{K}(A(\Gamma))}(B_{\lambda,n}^\bullet, \: P_\alpha^\bullet[1])$, then there exists a morphism $g$ in $\Hom_{\mathrm{K}(A(\Gamma))}(B_{\lambda,n}^\bullet, \: P_\alpha^\bullet[1])$ and $\mu \in \k^*$ such that $f = \mu g + f'$ and $g$ is lifted from a non-zero standard map in $\Hom_{\mathrm{K}(A(\Gamma))}(B_{\lambda,1}^\bullet, \: P_\alpha^\bullet[1])$.
\end{enumerate}
\end{lemma}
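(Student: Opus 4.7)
The plan is to adapt the proof of \cite[Lem.~5.3]{ALP} to our unbounded setting, handling both the case when $\alpha$ is asymptotic (so $P_\alpha^\bullet$ is unbounded) and when $n\in\{\infty,-\infty\}$ (so $B_{\lambda,n}^\bullet$ has infinitely many layers in its unfolded diagram). I will give the argument for (1); statement (2) is entirely dual.

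First, fix a nonzero component $h \colon P_\alpha^i \to B_{\lambda,n}^{i+1}$ of the chain map $f$. Using the layer decomposition displayed before the definition of ``lifted from'', this component has codomain in some specific layer, call it layer $m$. Treating layer $m$ as a subcomplex isomorphic to $B_{\lambda,1}^\bullet$ (forgetting the $m_t$-connections to adjacent layers), the argument of \cite[Lem.~4.2]{ALP} extends $h$ to a singleton single, singleton double, or (quasi-)graph map $g_m \colon P_\alpha^\bullet \to \text{layer }m[1]$ that appears as a summand of $f$ restricted to layer $m$. This step goes through verbatim because it is a purely local argument on finitely many arrows around the component $h$, and the extension of the methods of \cite{ALP} to infinite-dimensional string complexes has already been established in Step 1 of the proof of Proposition~\ref{Prop: standard iff non-zero}.

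Second, I define the lifted map $g$ from $g_m$ by propagating its components through the $m_t$-arrows between layers: at each position on layer $m$ where $g_m$ has a nonzero entry, one extends $g$ minimally to the other layers by following the $m_t$-connections so that $g$ remains a chain map. The key calculation is that the square involving $m_t$ commutes up to the relations already built into $B_{\lambda,n}^\bullet$, which is the same verification as in the finite case; in the $n=\infty$ (resp.\ $n=-\infty$) case the extension proceeds upward (resp.\ downward) through infinitely many layers, but because each layer of $B_{\lambda,n}^\bullet$ decomposes as a direct sum of projectives indexed over $\mathbb{Z}$ and each propagated entry involves only finitely many source components at each cohomological degree, the resulting map $g$ is a well-defined morphism of complexes. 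By construction $g$ is lifted from the nonzero standard map $g_m$ (after identifying layer $m$ with $B_{\lambda,1}^\bullet$).

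Finally, writing $f = \mu g + f'$ where $\mu \in \k^*$ is the coefficient extracted so that $f$ and $\mu g$ agree on the component $h$, one obtains the desired decomposition. The main obstacle is the well-definedness of the lifting when both $\alpha$ is asymptotic and $n\in\{\infty,-\infty\}$: one must check that the infinite products/sums appearing in $P_\alpha^\bullet$ (depending on whether $\alpha$ is expanding or contracting) are compatible with the direct sum structure of $B_{\lambda,n}^\bullet$. This is handled by observing that each cohomological component of $g$ is a finite sum of projective-module maps — the infinite structures only appear when one varies the cohomological degree — so the categorical limits used to define the infinite-dimensional projective resolutions do not interfere with the layer-by-layer construction of $g$.
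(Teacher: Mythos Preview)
Your proposal is correct and follows the same approach as the paper: both adapt \cite[Lem.~5.3]{ALP} to the infinite-dimensional setting, and indeed the paper's own proof consists of nothing more than the sentence ``adapted from \cite[Lem.~5.3]{ALP} and the argument applies easily to the infinite-dimensional case.'' Your sketch is therefore more detailed than what appears in the paper; the only caveat is that your final paragraph's claim that ``each cohomological component of $g$ is a finite sum of projective-module maps'' needs a little care in the adic case $n=-\infty$ (where the target involves an inverse-limit structure rather than a direct sum), but the layer-by-layer lifting still goes through for the same reasons as in \cite{ALP}.
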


\begin{proposition}\label{Prop: can take 1-dim bands}
Let $\alpha$ be a finite or asymptotic arc in $(\mathrm{S}, \mathrm{M})$ and let $\lambda\in \k^*$ and $n\in \mathbb{N}\cup\{\infty, -\infty\}$.  Then the following statements hold. \begin{enumerate}
\item $\Ext^1_{A(\Gamma)}(M(\alpha), \: M_{\lambda,n}) = 0$ if and only if $\Ext^1_{A(\Gamma)}(M\alpha), \: M_{\lambda,1}) = 0$.
\item $\Ext^1_{A(\Gamma)}(M_{\lambda,n},\: M(\alpha)) = 0$ if and only if $\Ext^1_{A(\Gamma)}(M_{\lambda,1}, \: M(\alpha)) = 0$.
\end{enumerate}
\end{proposition}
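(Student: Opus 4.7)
The plan is to translate both statements into assertions about morphisms between projective resolutions in the homotopy category and then apply Lemma \ref{Lem: can take 1-dim bands}. Using the standard isomorphism $\Ext^1_{A(\Gamma)}(X,Y) \cong \Hom_{\mathrm{K}(A(\Gamma))}(P_X^\bullet, P_Y^\bullet[1])$ together with the identifications $P_{M(\alpha)}^\bullet = P_\alpha^\bullet$ and $P_{M_{\lambda,n}}^\bullet = B_{\lambda,n}^\bullet$ from Lemma \ref{Lem: proj res}, part (1) becomes the equivalence
\[
\Hom_{\mathrm{K}(A(\Gamma))}(P_\alpha^\bullet, B_{\lambda,n}^\bullet[1]) = 0 \iff \Hom_{\mathrm{K}(A(\Gamma))}(P_\alpha^\bullet, B_{\lambda,1}^\bullet[1]) = 0,
\]
and part (2) is reformulated analogously.

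For the implication that vanishing at $n=1$ forces vanishing at general $n$, I would apply the contrapositive of Lemma \ref{Lem: can take 1-dim bands}(1) directly: any nonzero morphism $f$ in $\Hom_{\mathrm{K}(A(\Gamma))}(P_\alpha^\bullet, B_{\lambda,n}^\bullet[1])$ decomposes as $f = \mu g + f'$ with $g$ lifted from a nonzero standard map in $\Hom_{\mathrm{K}(A(\Gamma))}(P_\alpha^\bullet, B_{\lambda,1}^\bullet[1])$, so the latter hom space is nonzero. The corresponding implication in part (2) follows symmetrically from Lemma \ref{Lem: can take 1-dim bands}(2).

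For the reverse implication, given a nonzero standard map $h$ in $\Hom_{\mathrm{K}(A(\Gamma))}(P_\alpha^\bullet, B_{\lambda,1}^\bullet[1])$, the plan is to construct the minimal lift $\tilde h \in \Hom_{\mathrm{K}(A(\Gamma))}(P_\alpha^\bullet, B_{\lambda,n}^\bullet[1])$ as in the definition preceding Lemma \ref{Lem: can take 1-dim bands}, and to verify that $\tilde h$ is not null-homotopic. This is the main obstacle. In the finite-dimensional setting of \cite[Sec.~5]{ALP}, lifts of distinct nonzero standard maps are shown to be linearly independent in $\Hom_{\mathrm{K}}$, so in particular the lift of any nonzero standard map is itself nonzero in the homotopy category. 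The same style of argument should carry over here, but one must additionally handle the unboundedness of $P_\alpha^\bullet$ (when $\alpha$ is asymptotic) and of $B_{\lambda,n}^\bullet$ (when $n = \pm\infty$), arguing in the spirit of the proof of Proposition \ref{Prop: standard iff non-zero} that any putative null-homotopy of $\tilde h$ can be restricted, via a suitable chain map relating $B_{\lambda,n}^\bullet$ and $B_{\lambda,1}^\bullet$, to a null-homotopy of $h$ itself, contradicting $h \neq 0$ in $\Hom_{\mathrm{K}}$. The symmetric argument delivers the reverse implication for part (2).
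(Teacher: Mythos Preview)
Your approach matches the paper's: translate to $\Hom_{\mathrm{K}(A(\Gamma))}$ via Lemma \ref{Lem: proj res} and then combine Proposition \ref{Prop: standard iff non-zero} with Lemma \ref{Lem: can take 1-dim bands}. The paper simply asserts that Lemma \ref{Lem: can take 1-dim bands} yields the full equivalence $\Hom_{\mathrm{K}}(P_\alpha^\bullet, B_{\lambda,n}^\bullet[1]) = 0 \iff$ no standard map to $B_{\lambda,1}^\bullet[1]$, so the reverse implication you single out (that a nonzero standard map for $n=1$ lifts to something non-null-homotopic for general $n$) is absorbed into that citation; this is justified because in \cite[Sec.~5]{ALP} such lifts are shown to form a basis of the $\Hom_{\mathrm{K}}$-space, hence are nonzero, and the paper has already declared that those arguments carry over. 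Your proposed verification via restricting a putative null-homotopy along a chain map $B_{\lambda,n}^\bullet \to B_{\lambda,1}^\bullet$ is a reasonable alternative way to see the same point, but it is not needed once one accepts the full strength of the \cite{ALP} argument behind Lemma \ref{Lem: can take 1-dim bands}.
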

\begin{proof}
We prove \textbf{(1)}, the proof of \textbf{(2)} is similar.  By Proposition \ref{Prop: standard iff non-zero}, we have that the $\k$-vector space $\Hom_{\mathrm{K}(A(\Gamma))}(P_\alpha^\bullet, \: B_{\lambda,1}^\bullet[1])$ is zero if and only if there are no standard maps in from $P_\alpha^\bullet$ to $B_{\lambda,1}^\bullet[1]$.  By Lemma \ref{Lem: can take 1-dim bands}, we also have that the $\k$-vector space $\Hom_{\mathrm{K}(A(\Gamma))}(P_\alpha^\bullet, \: B_{\lambda,n}^\bullet[1])$ is zero if and only if there are no standard maps from $P_\alpha^\bullet$ to $B_{\lambda,1}^\bullet[1]$.  The proposition follows immediately after observing the isomorphisms $\Ext^1_{A(\Gamma)}(M,N) \cong \Hom_{\mathrm{K}(A(\Gamma))}(P_M^\bullet, P_N^\bullet[1])$ and $\Ext^1_{A(\Gamma)}(N, M) \cong \Hom_{\mathrm{K}(A(\Gamma))}(P_N^\bullet, P_M^\bullet[1])$ given in Section \ref{Sec: string and band complexes}.
\end{proof}

%
\subsection{Proof of Theorem \ref{Thm: extensions app}.}

Following Sections \ref{Sec: string and band complexes} to \ref{Sec: higher bands}, we are now ready to give a proof of the main theorem of the appendix.

\begin{proof}[Proof of Theorem \ref{Thm: extensions app}]
If neither of $M$ and $N$ is isomorphic to $M_{\lambda,n}$ with $n\neq 1$, then the theorem holds by Lemma \ref{Lem: main theorem for n=1}.  Without loss of generality, assume that $M \cong M_{\lambda, n}$ for $n\neq 1$.  By Proposition \ref{Prop: can take 1-dim bands}, we have that $\Ext^1_{A(\Gamma)}(M,N) = 0 = \Ext^1_{A(\Gamma)}(N, M)$ exactly when $\Ext^1_{A(\Gamma)}(M_{\lambda,1},\: N) = 0 = \Ext^1_{A(\Gamma)}(N,\: M_{\lambda, 1})$.  Therefore we may apply Lemma \ref{Lem: main theorem for n=1} again to conclude that this is the case exactly when $\alpha = \beta$ and $\delta$ do not cross outside of 3-cycles.
\end{proof}

\end{appendices}


\bibliographystyle{plain}
\bibliography{bib}

\end{document}